\documentclass[11pt]{article}

\usepackage[english]{babel}
\usepackage[T1]{fontenc}


\usepackage{amsfonts}
\usepackage{amsmath}
\usepackage{amssymb}
\usepackage{amsthm}
\usepackage{mathtools}
\usepackage{stmaryrd}
\usepackage{dsfont}
\usepackage{graphicx}
\usepackage{hyperref}
\usepackage{titlesec}
\usepackage{mathabx}
\usepackage{enumitem,tocloft}
\usepackage[top=2.5cm, bottom=2.5cm, left=3cm, right=3cm]{geometry}
\usepackage{xcolor}
\usepackage{fourier-orns}
\usepackage[utf8]{inputenc}
\usepackage{bm}
\usepackage{bbm}
\usepackage{framed}
\usepackage{supertabular}
\hypersetup{linktocpage,breaklinks=true}
\usepackage{mathrsfs}
\usepackage{csquotes}
\usepackage{bbold}

\SetSymbolFont{stmry}{bold}{U}{stmry}{m}{n}

\usepackage[backend=bibtex, style=alphabetic, maxbibnames=6, sorting=nyt]{biblatex}

\addbibresource{biblio.bib}

\theoremstyle{plain}
\newtheorem{theorem}{Theorem}[section]
\newtheorem{corollary}[theorem]{Corollary}

\newtheorem{theoremletter}{Theorem}

\newtheorem{corollaryletter}[theoremletter]{Corollary}
\newtheorem{propositionletter}[theoremletter]{Proposition}

\newtheorem{lemma}[theorem]{Lemma}
\newtheorem{proposition}[theorem]{Proposition}

\newtheorem{claim}[theorem]{Claim}
\theoremstyle{definition}
\newtheorem{definition}[theorem]{Definition}

\newtheorem{question}[theorem]{Question}
\newtheorem{remark}[theorem]{Remark}

\numberwithin{equation}{section}

\newcommand{\R}{\mathbb{R}}

\newcommand{\Z}{\mathbb{Z}}	

\newcommand{\N}{\mathbb{N}}	

\newcommand{\shuf}[1]{\mathsf{Shuffler}(#1)}




\makeatletter
\newcommand*{\defeq}{\mathrel{\rlap{%
                     \raisebox{0.3ex}{$\m@th\cdot$}}%
                     \raisebox{-0.3ex}{$\m@th\cdot$}}%
                     =}
\makeatother

\begin{document}

\begin{titlepage}
\setcounter{page}{1}
\title{Quasi-isometric rigidity of lamplighters with lamps of polynomial growth}
\author{\small{Vincent Dumoncel}}

\date{\today}
\maketitle

\begin{abstract}
A quasi-isometry between two connected graphs is measure-scaling if one can control precisely the sizes of pre-images of finite subsets. Such a notion is motivated by the work of Eskin-Fisher-Whyte on lamplighters over $\Z$~\cite{EFW12, EFW13} and the work of Dymarz on biLipschitz equivalences of amenable groups~\cite{Dym05, Dym10}, and led Genevois and Tessera to introduce the scaling group $\text{Sc}(X)$ of an amenable bounded degree graph $X$ in~\cite{GT22}. The main result of our article is a rigidity property for quasi-isometries between lamplighters with lamps of polynomial growth. Under assumptions on $G$ and $H$, any such quasi-isometry $N\wr G\longrightarrow M\wr H$ must be measure-scaling for some scaling factor depending on the growth degrees of $N$ and $M$. In particular, the scaling group of such wreath products is reduced to $\lbrace 1\rbrace$. As applications, we obtain additional examples of pairs of quasi-isometric groups that are not biLipschitz equivalent. We also give applications to the quasi-isometric classification of some iterated wreath products, and we exhibit the first example of an amenable finitely generated group $H$ which is \textit{lamplighter-rigid}, in the sense that $\Z/n\Z\wr H$ and $\Z/m\Z\wr H$ are quasi-isometric if and only if $n=m$.
\end{abstract}

\smallskip

{
		\small	
		\noindent\textbf{{Keywords:}}  Wreath products, quasi-isometry, scaling groups. 
	}
	
	\smallskip
	
	{
		\small	
		\noindent\textbf{{MSC-classification:}}	
		Primary 20F65; Secondary 20F69.
	}


\tableofcontents

\section{Introduction}\label{section1}

\smallskip

In geometric group theory, a challenging question is to understand the collection of all maps between two given metric spaces that are compatible with the large-scale geometry of our spaces. Such collections encompass for instance quasi-isometries, coarse and quasi-isometric embeddings, or regular maps. The motivation behind this program is that, on the one hand finitely generated groups are naturally metric spaces and can thus be studied from a geometric point of view, and on the other hand their geometric properties are closely related to their algebraic structure. 

\smallskip

\sloppy Several milestones have been achieved in the study of the quasi-isometric rigidity of many classes of groups, among which abelian and non-abelian free groups~\cite{Dun85}, mapping class groups~\cite{Beh+12}, non-uniform lattices of $\text{Isom}(\mathbb{H}^{n})$~\cite{Sch96}, $n\ge 3$, lamplighters over $\Z$ and others $\text{SOL}-$like groups~\cite{EFW12, EFW13}, or Baumslag-Solitar groups~\cite{FM98, FM99, Why01}.

\smallskip

In a recent work~\cite{GT24b}, Genevois and Tessera established a complete classification up to quasi-isometry of lamplighters over finitely presented one-ended groups. Precisely, they showed that the existence of a quasi-isometry between $E\wr H$ and $F\wr K$ is parametrized by the existence of a quasi-isometry between $H$ and $K$ which is compatible in a certain way with the lamp groups $E$ and $F$, in the sense that it must be measure-scaling for some scaling factor that depends on the cardinalities of $E$ and $F$. Roughly speaking, a quasi-isometry $X\longrightarrow Y$ between two metric spaces is quasi-$k$-to-one if the size of the pre-image of a finite subset $A$ of $Y$ is $k\left|A\right|$, up to an error controlled uniformly over $A\subset Y$ (precise definitions are given in Section ~\ref{section2}). This quantification is also motivated by a famous result of Whyte (recalled in Theorem~\ref{thm:whytetheorem}), which says that being quasi-one-to-one is the same as lying within bounded distance from a bijection. Furthermore, the property of being measure-scaling is well-behaved under composition and quasi-inverses (see Theorem~\ref{thm:stabilitypropertiesofscalingfactors}), which yields to define the scaling group of an amenable space $X$ as
\begin{equation*}
    \text{Sc}(X) \defeq \left\lbrace k>0 : \exists \; \text{a quasi-$k$-to-one quasi-isometry $X\longrightarrow X$}\right\rbrace. 
\end{equation*}

\sloppy In this way, $\text{Sc}(X)$ is a subgroup of $(\R_{>0},\cdot)$. Given an amenable space $X$, its scaling group is an object of independent interest, that encodes many possible behaviours for its self-quasi-isometries. In the case of $X=G$ a finitely generated group, identifying its scaling group can give access to informations on its algebraic structure. For instance, it is not hard to check that, given a finite-index subgroup $H$ of $G$, the natural inclusion $H\hookrightarrow G$ is quasi-$\frac{1}{[G:H]}$-to-one. Thus, if $G$ has trivial scaling group, it cannot contain proper finite index subgroups isomorphic to itself.

\smallskip

Scaling groups of several classes of amenable groups, among which Carnot groups and Baumslag-Solitar groups, have been computed in~\cite[Corollary~6.6]{GT22}. Additionally, as an important consequence of the classification established in~\cite[Theorem~1.4]{GT24b}, \mbox{$\text{Sc}(F\wr K)=\lbrace 1\rbrace$} if $F$ is finite and $K$ is amenable, finitely presented and one-ended. This observation heavily relies on the general form of quasi-isometries between lamplighters, which are shown in~\cite{GT24b} to all preserve the lamplighter structure in the following way:
\begin{definition}
Let $M,N,G,H$ be finitely generated groups. A quasi-isometry $q\colon N\wr G\longrightarrow M\wr H$ is \textit{of aptolic form} if there exist two maps $\alpha \colon N^{(G)} \longrightarrow M^{(H)}$ and $\beta\colon G\longrightarrow H$ such that 
\begin{equation*}
    q(c,p)=(\alpha(c),\beta(p))
\end{equation*}
for any $(c,p)\in N\wr G$. A quasi-isometry $q\colon N\wr G\longrightarrow M\wr H$ is \textit{aptolic} if it is of aptolic form and has a quasi-inverse of aptolic form. 
\end{definition}

Precisely,~\cite[Theorem~1.18]{GT24b} says that any quasi-isometry between two lamplighters $F_{1}\wr K_{1}$ and $F_{2}\wr K_{2}$, where $K_{1},K_{2}$ are amenable finitely presented and one-ended, is at a bounded distance from an aptolic quasi-isometry $F_{1}\wr K_{1}\longrightarrow F_{2}\wr K_{2}$. The authors then deduce from this rigidity phenomenon that any quasi-isometry between $F_{1}\wr K_{1}$ and $F_{2}\wr K_{2}$ is measure-scaling for a scaling factor depending on $|F_{1}|$ and $|F_{2}|$.

\smallskip

Here we show that, for lamplighters with lamps of polynomial growth, if all quasi-isometries are aptolic (up to bounded distances), then they are also all measure-scaling, with a scaling factor depending on the involved growth degrees. 

\begin{theoremletter}\label{thm:maintheorem}
Let $N$ and $M$ be finitely generated groups of polynomial growth, with growth degrees $n$ and $m$ respectively. Let $G$ and $H$ be finitely generated amenable groups such that any quasi-isometry between $N\wr G$ and $M\wr H$ is (up to bounded distance) aptolic. Then any quasi-isometry $N\wr G\longrightarrow M\wr H$ is quasi-$\frac{m}{n}$-to-one.
\end{theoremletter}

In~\cite{BGT24} are provided explicit examples of groups $G$ and $H$ for which any quasi-isometry $N\wr G\longrightarrow M\wr H$ is aptolic, up to a finite distance change. Thus:

\begin{theoremletter}\label{thm:maintheoremfortheclassMexp}
Let $N$ and $M$ be finitely generated groups of polynomial growth, with growth degrees $n$ and $m$ respectively. Let $G$ and $H$ be finitely presented amenable groups from $\mathcal{M}_{\text{exp}}$. Then any quasi-isometry $N\wr G \longrightarrow M\wr H$ is quasi-$\frac{m}{n}$-to-one.  
\end{theoremletter}

We refer to Section~\ref{section2} for the precise definition of the class $\mathcal{M}_{\text{exp}}$. For now, let us simply mention that it contains many amenable groups: solvable Baumslag-Solitar groups $\text{BS}(1,n)$, $n\ge 2$, $\text{SOL}(\Z)$, lamplighter groups, as well as any direct product of one of these groups with an arbitrary finitely generated amenable group.

\smallskip

We also emphasize that, in these theorems, the amenability condition is necessary to avoid an empty statement: between non-amenable spaces, a quasi-isometry is measure-scaling for all possible scaling factors.

\smallskip

\noindent \sloppy \textbf{Quasi-isometries and biLipschitz equivalences.} Refining our understanding of quasi-isometries, an other interesting problem is to understand to what extent being quasi-isometric and being biLipschitz equivalent differ. Whyte's theorem mentioned above ensures that, for non-amenable spaces, a quasi-isometry can always be turned into a biLipschitz equivalence by a finite distance change. For amenable spaces, the question is much more subtle, and no analog of Whyte's theorem can hold: indeed, for a proper finite-index subgroup $H$ of an amenable finitely generated group $G$, the inclusion $H\hookrightarrow G$ is quasi-$\frac{1}{[G:H]}$-to-one and thus does not lie at bounded distance from a bijection. The first example of a pair of amenable quasi-isometric groups that are not biLipschitz equivalent appears in~\cite{Dym10} and involves lamplighters over $\Z$. 

\smallskip

Concerning biLipschitz equivalences between lamplighters with infinite lamp groups, we can deduce from our main result the next consequence:

\begin{corollaryletter}\label{cor:BLE}
Let $N$ and $M$ be finitely generated groups with polynomial growth. Let $G$ and $H$ be finitely presented amenable groups from $\mathcal{M}_{\text{exp}}$. If $N\wr G$ and $M\wr H$ are biLipschitz equivalent, then $N$ and $M$ have same growth degrees and $G$ and $H$ are biLipschitz equivalent. 
\end{corollaryletter}

It is worth noticing that the converse statement is not true: according to~\cite[Corollary~6.36]{BGT24}, if $N$ and $M$ are nilpotent and $N\wr G$ and $M\wr H$ are quasi-isometric, then $N$ and $M$ must have the same nilpotent step. Thus, for instance, $\Z^4\wr\text{BS}(1,n)$ and $\text{Heis}(\Z)\wr\text{BS}(1,n)$ are not quasi-isometric, even though $\Z^4$ and $\text{Heis}(\Z)$ have equal growth degrees. Restricting to virtually abelian groups, we can completely describe the quasi-isometry class of our wreath products. 

\begin{theoremletter}\label{thm:classificationforvirtuallyabeliangroups}
Let $A_{1}$ and $A_{2}$ be infinite virtually abelian finitely generated groups, with growth degrees $d_{1}$ and $d_{2}$ respectively. Let $G$ and $H$ be finitely presented amenable groups from $\mathcal{M}_{\text{exp}}$. Then:
\begin{enumerate}[label=(\roman*)]
    \item $A_{1}\wr G$ and $A_{2}\wr H$ are quasi-isometric if and only if there exists a quasi-$\frac{d_{2}}{d_{1}}$-to-one quasi-isometry $G\longrightarrow H$;
    \item $A_{1}\wr G$ and $A_{2}\wr H$ are biLipschitz equivalent if and only if $d_{1}=d_{2}$ and there exists a biLipschitz equivalence $G\longrightarrow H$.
\end{enumerate}
\end{theoremletter}

We thus get additional examples of pairs of quasi-isometric groups that are not biLipschitz equivalent: for instance, as $\text{Sc}(\text{BS}(1,k))=\R_{>0}$, $\Z^{n}\wr\text{BS}(1,k)$ and $\Z^{m}\wr\text{BS}(1,k)$ are quasi-isometric for all $n,m\ge 1$ and $k\ge 2$, while they are biLipschitz equivalent if and only if $n=m$. 

\smallskip

In the opposite direction, as mentioned above, Genevois and Tessera showed in~\cite{GT24b} that a lamplighter group $F\wr K$, where $F$ is finite and $K$ is amenable, one-ended and finitely presented, has all its self-quasi-isometries at bounded distance from bijections. As a consequence of Theorem~\ref{thm:maintheoremfortheclassMexp}, the same phenomenon occurs for lamplighters with lamps of polynomial growth:

\begin{corollaryletter}
Let $N$ and $M$ be finitely generated groups with polynomial growth. Let $G$ and $H$ be finitely presented amenable groups from $\mathcal{M}_{\text{exp}}$. If $N$ and $M$ have equal growth degrees, then any quasi-isometry $N\wr G\longrightarrow M\wr H$ lies within bounded distance from a bijection. 

In particular, any quasi-isometry $N\wr G\longrightarrow N\wr G$ lies at bounded distance from a bijection, and thus $\text{Sc}(N\wr G)=\lbrace 1\rbrace$.
\end{corollaryletter}

As mentioned above, the fact that $N\wr G$ has a trivial scaling group allows us to deduce some algebraic facts on its subgroups of finite index. For instance, two biLipschitz equivalent finite-index subgroups of $N\wr G$ must have the same index, and, in particular, $N\wr G$ does not have any proper finite-index subgroup isomorphic to itself, a fact that is not obvious to prove from a purely algebraic point of view. In turn, this has the consequence that any non-surjective monomorphism $N\wr G \hookrightarrow N\wr G$ has an image of infinite index (such morphisms exist as soon as $N$ or $G$ is not co-Hopfian, e.g. $N=\Z^{n}$ or $G=\text{BS}(1,n)$ for $n\ge 1$; see~\cite[Theorem~6.1]{BFF24}). 
\vspace{0.15cm}

\noindent \textbf{Application to lamplighter-rigidity.} So far, related to the quasi-isometric classification of wreath products of the form $(\text{finite})\wr(\text{finitely generated})$, all results recorded in the literature exhibits a flexibility part, namely given a finitely generated group $H$ and integers $n,m\ge 2$ satisfying some arithmetic condition, it is often possible to construct a quasi-isometry between $\Z_{n}\wr H$ and $\Z_{m}\wr H$. This observation then suggests the next question:
\begin{question}\label{question1.5}
Does there exist a finitely generated group $H$ such that $\Z_{n}\wr H$ and $\Z_{m}\wr H$ are never quasi-isometric if $n\neq m$?
\end{question}

We know the answer when $H$ belongs to various classes of groups: for instance, if $H$ is non-amenable, $\Z_{n}\wr H$ and $\Z_{m}\wr H$ are quasi-isometric as soon as $n$ and $m$ have the same prime divisors, by~\cite[Theorem~3.12]{GT24b}. Hence an example of a group $H$ must necessarily come from the class of amenable groups.

\smallskip

But, further, if $H$ is amenable, it has finitely many ends, and thus it has either zero, one or two ends. The zero end case is trivial, as then both $\Z_{n}\wr H$ and $\Z_{m}\wr H$ are finite and thus quasi-isometric, for any choice of $n$ and $m$. The two ended case is also completely understood by the work of Eskin-Fisher-Whyte~\cite{EFW12, EFW13}, and in this case it suffices to take $n$ and $m$ to be powers of a common number to construct a quasi-isometry $\Z_{n}\wr H\longrightarrow \Z_{m}\wr H$. Therefore, for a positive answer to Question~\ref{question1.5}, one has to look into amenable one-ended groups. In this class, the case of finitely presented groups is also completely understood, and here as well it suffices to take $n=a^{s}$ and $m=a^{r}$ powers of a common number to deduce that $\Z_{n}\wr H$ and $\Z_{m}\wr H$ are quasi-isometric if $\frac{s}{r}\in\text{Sc}(H)$~\cite[Theorem~3.11]{GT24b}. Hence, within this class, an example of a group $H$ answering positively Question~\ref{question1.5} should have trivial scaling group. However, we do not know a single example of a finitely presented one-ended group with trivial scaling group. 

\smallskip

On the other hand, despite the fact that our wreath products $N\wr G$ are not finitely presented, they do have trivial scaling group, and we show that they indeed provide examples of groups answering positively Question~\ref{question1.5}:

\begin{propositionletter}\label{prop:lamplighterrigidity}
Let $n,m\ge 2$ be two integers. Let $N$ be a finitely generated group with polynomial growth, and let $G$ be a finitely presented amenable group from $\mathcal{M}_{\text{exp}}$. Then the lamplighters $\Z_{n}\wr(N\wr G)$ and $\Z_{m}\wr(N\wr G)$ are quasi-isometric if and only if $n=m$. 
\end{propositionletter}

More generally, we know from~\cite{GT24b} that the existence of a quasi-isometry \mbox{$\Z_{n}\wr G\longrightarrow \Z_{m}\wr H$} provides a measure-scaling quasi-isometry $G\longrightarrow H$, where the scaling factor depends on $n$ and $m$. Thus, when $G$ and $H$ are themselves lamplighters with lamps of polynomial growth, the existence of such a quasi-isometry imposes a compatibility condition between $n$, $m$ and the growth degrees of lamp groups. This is the content of the next statement.

\begin{propositionletter}\label{prop:mixingofscalingconditions}
Let $n,m\ge 2$ be two integers. Let $N_{1}$ and $N_{2}$ be finitely generated groups of polynomial growth, with growth degrees $n_{1}$ and $n_{2}$ respectively. Let $G$ and $H$ be finitely presented amenable groups from $\mathcal{M}_{\text{exp}}$. If \;$\Z_{n}\wr(N_{1}\wr G)$ and $\Z_{m}\wr(N_{2}\wr H)$ are quasi-isometric, then there exist $a,r,s\ge 1$ such that $n=a^{r}$, $m=a^{s}$, and $\frac{s}{r}=\frac{n_{2}}{n_{1}}$. 
\end{propositionletter}

\noindent \textbf{Iterated wreath products.} In the light of Proposition~\ref{prop:mixingofscalingconditions}, it is also natural to try to fully classify iterated wreath products. For such groups, most of the technology developed in~\cite{GT24a, GT24b} do not apply, because lamplighter groups $F\wr K$ with $F$ finite do not have the \textit{thick bigon property}, a key quasi-isometry invariant introduced in~\cite{GT24a}. Roughly speaking, a finitely generated group $G$ has the thick bigon property if given any two points $x,y\in G$ connected by a path $\gamma_{1}$ and any point $p\in \gamma_{1}$ far enough from $x$ and $y$, it is always possible to connect $x$ and $y$ by a path $\gamma_{2}$ which is coarsely homotopic to $\gamma_{1}$ and that avoids arbitrary large balls centered at $p$ (see~\cite[Definition~3.2]{GT24a} for a more precise definition). It turns out that, as soon as $F$ is infinite, wreath products $F\wr K$ have this property, and building on Theorem~\ref{thm:classificationforvirtuallyabeliangroups}, we get a complete classification of wreath products of the form 
\begin{equation*}
    (\text{finite})\wr((\text{infinite virtually abelian})\wr(\text{finitely presented from $\mathcal{M}_{\text{exp}}$})).
\end{equation*}

\begin{corollaryletter}\label{cor:classificationofiteratedwreathproducts}
Let $n,m\ge 2$ be two integers. Let $A_{1}$ and $A_{2}$ be infinite virtually abelian finitely generated groups, with growth degrees $d_{1}$ and $d_{2}$ respectively. Let $G$ and $H$ be finitely presented amenable groups from $\mathcal{M}_{\text{exp}}$. Then:
\begin{enumerate}[label=(\roman*)]
    \item $\Z_{n}\wr(A_{1}\wr G)$ and $\Z_{m}\wr(A_{2}\wr H)$ are quasi-isometric if and only if there exist $a,r,s\ge 1$ such that $n=a^{r}$, $m=a^{s}$, $\frac{s}{r}=\frac{d_{2}}{d_{1}}$, and there exists a quasi-$\frac{d_{2}}{d_{1}}$-to-one quasi-isometry \;$G\longrightarrow H$;
    \item $\Z_{n}\wr(A_{1}\wr G)$ and $\Z_{m}\wr(A_{2}\wr H)$ are biLipschitz equivalent if and only if $n=m$, $d_{1}=d_{2}$ and there exists a biLipschitz equivalence \;$G\longrightarrow H$.
\end{enumerate}
\end{corollaryletter}

For instance, this corollary rules out the existence of a quasi-isometry 
\begin{equation*}
\Z_{2}\wr(\Z^{2}\wr \text{BS}(1,n)) \longrightarrow \Z_{4}\wr(\Z^{3}\wr \text{BS}(1,n))
\end{equation*}
where $n\ge 2$.

\smallskip

It is worth noticing that, if the amenability assumption on $G$ and $H$ is removed, a classification can also be easily deduced from \cite{BGT24} and Proposition~\ref{prop:constructionsofQI} below: $\Z_{n}\wr(A_{1}\wr G)$ and $\Z_{m}\wr(A_{2}\wr H)$ are quasi-isometric if and only if $n$ and $m$ have the same prime divisors and $G$ and $H$ are quasi-isometric.

\smallskip

\noindent \textbf{More wreath products.} Observe that, when comparing two wreath products over the same bases, many quasi-isometry invariants (such as the volume growth, the number of ends, F\o lner functions, divergence, asymptotic dimension to name a few) fail to distinguish them. In general, much more involved methods, as the ones developed in~\cite{EFW12, EFW13, GT24a, GT24b}, appear necessary to exhibit a significative geometric difference between our groups. Those strategies require additional assumptions, relying on notions such as the thick bigon property in~\cite{GT24a} or coarse separation in~\cite{BGT24}. We conclude the article by noticing that our construction of quasi-isometries, combined with results of Erschler~\cite{Dyu00, Ers03}, also provide classification results in situations where techniques from~\cite{GT24a, BGT24} cannot be applied.  

\begin{corollaryletter}\label{cor:classificationforvirtuallyabeliangroups2}
Let $A_{1},A_{2},B_{1},B_{2}$ be infinite virtually abelian finitely generated groups. Then $A_{1}\wr B_{1}$ and $A_{2}\wr B_{2}$ are quasi-isometric if and only if $B_{1}$ and $B_{2}$ have same growth degrees. 
\end{corollaryletter}

For instance, $\Z^d\wr\Z^{k}$ and $\Z^{d'}\wr\Z^{k'}$ are quasi-isometric if and only if $k=k'$.

\medskip

\noindent \textbf{Plan of the paper.} Section~\ref{section2} introduces in detail the necessary background on scaling quasi-isometries, and Section~\ref{section3} the necessary results on aptolic quasi-isometries between wreath products. Section~\ref{section4} is dedicated to the proof of our main result. Section~\ref{section5} presents the various mentioned applications, and Section~\ref{section6} records several questions related to the article. 

\medskip

\noindent \textbf{Acknowledgements.} I am grateful to Anthony Genevois for useful dicussions, and to Romain Tessera for having encouraged me to investigate those topics. I also thank Corentin Correia and Juan Paucar for useful comments on an earlier version of the article. 
\section{Coarse geometry and scaling quasi-isometries}\label{section2}

\subsection{Notations}\label{subsection2.1} 
Given a group $G$, we denote $1_{G}$ its neutral element, and if $S\subset G$ is a finite generating set of $G$, we identify $G$ with its Cayley graph $\text{Cay}(G,S)$ when this does not cause ambiguity. For an integer $n\ge 1$, $\Z_{n}$ denotes the cyclic group of order $n$.

\smallskip

Given a metric space $(X,d_{X})$, we denote $B_{X}(x,r)$ the closed ball centered at $x\in X$ of radius $r>0$, that is the set $\lbrace y\in X : d_{X}(x,y)\le r\rbrace$. 

\smallskip

In the sequel, $\simeq$ denotes the usual asymptotic equivalence relation on non-decreasing functions $\N\longrightarrow \N$: $f\simeq g$ if there exist two constants $C,C'>0$ such that $f(n)\le Cg(Cn)$ for all $n\in\N$ and $g(n)\le C'\cdot f(C'n)$ for all $n\in\N$. 

\smallskip

Any graph in this text is unoriented and simplicial, that is without loops nor multiple edges. For such a graph $\Gamma$, $V(\Gamma)$ refers to its set of vertices, while $E(\Gamma)$ stands for the set of edges. Given any subset $A\subset\Gamma$, its boundary in $\Gamma$ is denoted $\partial_{\Gamma}A$ and is defined as
\begin{equation*}
    \partial_{\Gamma}A \defeq \left\lbrace y\in \Gamma\setminus A : \exists x\in A, (x,y)\in E(\Gamma)\right\rbrace.
\end{equation*}

\subsection{Coarse geometry}\label{subsection2.2} Given two metric spaces $(X,d_{X}), (Y,d_{Y})$ and constants $C\ge 1$, $K\ge 0$, a map $f\colon (X,d_{X})\longrightarrow (Y,d_{Y})$ is a \textit{$(C,K)-$quasi-isometry} if:
\begin{itemize}
    \item $\frac{1}{C}\cdot d_{X}(x,y)-K\le d_{Y}(f(x),f(y)) \le C\cdot d_{X}(x,y)+K$ for any $x,y\in X$;
    \item $d_{Y}(y, f(X)) \le K$ for any $y\in Y$.
\end{itemize}
Equivalently, $f\colon X\longrightarrow Y$ is a $(C,K)-$quasi-isometry if it satisfies the first condition and if there exists a map $g\colon Y\longrightarrow X$ such that $d(g\circ f, \text{Id}_{X})\le K$ and $d(f\circ g, \text{Id}_{Y})\le K$, where the distance between two maps $h_{1},h_{2}\colon X\longrightarrow Y$ defined on the same metric space is
\begin{equation*}
    d(h_{1},h_{2})\defeq \sup_{x\in X}d_{Y}(h_{1}(x),h_{2}(x)).
\end{equation*}
The map $g$ is called a \textit{quasi-inverse} of $f$, and it is unique up to bounded distances. A $(C,0)-$quasi-isometry $f\colon X\longrightarrow Y$ is usually called a \textit{biLipschitz equivalence}, or a \textit{$C-$biLipschitz equivalence}, and if $f$ satisfies only
\begin{equation*}
    d_{Y}(f(x), f(y)) \le Cd_{X}(x,y)
\end{equation*}
for any $x,y\in X$, we call it \textit{$C-$Lipschitz}. 

\smallskip

The next easy observation often simplifies computations for checking that a given map is Lipschitz.

\begin{lemma}\label{lm:lipschitzmap}
Let $f\colon X\longrightarrow Y$ be a map between two bounded degree graphs. If there exists $C>0$ such that $d_{Y}(f(x),f(y)) \le C$ for any pair $(x,y)$ of adjacent vertices of $X$, then $f$ is $C-$Lipschitz.
\end{lemma}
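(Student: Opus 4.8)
The statement to prove is Lemma~\ref{lm:lipschitzmap}: if $f\colon X\longrightarrow Y$ is a map between bounded degree graphs and $d_Y(f(x),f(y))\le C$ whenever $x,y$ are adjacent in $X$, then $f$ is $C$-Lipschitz, i.e.\ $d_Y(f(x),f(y))\le C\,d_X(x,y)$ for all $x,y\in X$. The natural approach is to pass from the hypothesis on edges to arbitrary pairs by walking along a geodesic path in $X$ and applying the triangle inequality in $Y$.

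First I would fix $x,y\in X$ and set $n\defeq d_X(x,y)$. Since $X$ is a (connected) graph and the graph metric is the path metric, there is a path $x=x_0, x_1, \dots, x_n=y$ with each $x_{i-1}$ adjacent to $x_i$; here one uses that $d_X(x,y)$ is realized by an edge-path of that length. Then by the triangle inequality in $Y$,
\begin{equation*}
d_Y(f(x),f(y)) \le \sum_{i=1}^{n} d_Y(f(x_{i-1}),f(x_i)) \le \sum_{i=1}^{n} C = Cn = C\,d_X(x,y),
\end{equation*}
where the middle inequality is exactly the edge hypothesis applied to each consecutive pair $(x_{i-1},x_i)$. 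This gives the claim directly; the case $x=y$ is trivial since then $n=0$.

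There is essentially no obstacle here: the bounded degree hypothesis is not even needed for this implication (it is presumably included because the lemma is invoked in that setting), and connectedness of $X$ — implicit in the standing convention that graphs carry their path metric, so that $d_X(x,y)<\infty$ — is what makes the geodesic path exist. The only point to be slightly careful about is to note that $d_X(x,y)$ finite plus the definition of the graph metric guarantees an edge-path of length exactly $d_X(x,y)$, after which the summation above is immediate. I would keep the proof to these two or three lines.
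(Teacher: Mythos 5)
Your proof is correct and is exactly the argument the paper gives: pick a geodesic edge-path from $x$ to $y$, apply the edge hypothesis to each consecutive pair, and sum via the triangle inequality. Your side remarks (bounded degree is not needed; connectedness is implicit in the graph metric) are accurate but inessential.
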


\begin{proof}
Let $x,y\in X$ and pick $x=x_{0},x_{1},\dots,x_{n-1},x_{n}=y$ a geodesic between $x$ and $y$. Then, from the triangle inequality and the assumption, one gets
\begin{equation*}
    d_{Y}(f(x),f(y)) \le \sum_{i=0}^{n-1}d_{Y}(f(x_{i}), f(x_{i+1})) \le C\cdot n=C\cdot d_{X}(x,y)
\end{equation*}
and thus $f$ is $C-$Lipschitz. 
\end{proof}

Recall also that, in a metric space $(X,d_{X})$, given a number $S\ge 0$ and a subset $A\subset X$, its \textit{$S-$neighborhood} is denoted $A^{+S}$ and is defined as 
\begin{equation*}
    A^{+S}\defeq \bigcup_{a\in A}B(a,S).
\end{equation*}
Then, the \textit{Hausdorff distance} between two subsets $A,B\subset X$ is 
\begin{equation*}
    d_{\text{Haus}}(A,B)\defeq \inf\lbrace S\ge 0 : A\subset B^{+S}, B\subset A^{+S}\rbrace. 
\end{equation*}

We will need the following facts in our computations.

\begin{lemma}\label{lm:cardinalityofneighborhood}
Let $X$ be a bounded degree graph. Then one has $|A^{+S}|\le N^{S}\cdot |A|$ for any finite subset $A\subset X$ and any $S\ge 0$, where $N\ge 3$ is an integer larger than the maximal degree of a vertex of $X$.
\end{lemma}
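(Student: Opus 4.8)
The plan is to reduce the statement to an upper bound on the cardinality of a single ball, and then to prove that bound by a straightforward induction on the radius.

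First I would write $A^{+S}=\bigcup_{a\in A}B_{X}(a,S)$ directly from the definition of the $S$-neighbourhood, so that $|A^{+S}|\le\sum_{a\in A}|B_{X}(a,S)|$ by subadditivity of cardinality. It therefore suffices to show that $|B_{X}(x,S)|\le N^{S}$ for every vertex $x\in X$ and every $S\ge 0$. Since distances in a graph take integer values, $B_{X}(x,S)=B_{X}(x,\lfloor S\rfloor)$, so one may assume $S\in\N$; the general case then follows from $N^{\lfloor S\rfloor}\le N^{S}$.

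Next I would prove $|B_{X}(x,S)|\le N^{S}$ for $S\in\N$ by induction on $S$. The base case $S=0$ is immediate, as $B_{X}(x,0)=\lbrace x\rbrace$ and $N^{0}=1$. For the inductive step, observe that $B_{X}(x,S+1)\setminus B_{X}(x,S)\subseteq\partial_{X}B_{X}(x,S)$, since any vertex $y$ with $d_{X}(x,y)=S+1$ lies outside $B_{X}(x,S)$ and is adjacent to some vertex at distance $S$ from $x$. Counting adjacencies between $\partial_{X}B_{X}(x,S)$ and $B_{X}(x,S)$ and using that every vertex of $X$ has degree at most $N-1$ (because $N$ is an integer strictly larger than the maximal degree), we get $|\partial_{X}B_{X}(x,S)|\le\sum_{z\in B_{X}(x,S)}\deg(z)\le(N-1)|B_{X}(x,S)|$. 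Hence $|B_{X}(x,S+1)|\le|B_{X}(x,S)|+(N-1)|B_{X}(x,S)|=N|B_{X}(x,S)|\le N\cdot N^{S}=N^{S+1}$ by the induction hypothesis. Combining this with the reduction above gives $|A^{+S}|\le\sum_{a\in A}N^{S}=N^{S}|A|$.

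There is no real obstacle here; this is a routine volume estimate. The only points deserving a little care are the reduction to integer radii (using that graph distances are integers) and the fact that the recursion produces the clean constant $N$ rather than $N+1$ — which is precisely where one uses that $N$ is \emph{strictly} larger than the maximal degree. The hypothesis $N\ge 3$ is not needed for this lemma and is included only for consistency with later statements.
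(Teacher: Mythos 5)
Your proof is correct and takes essentially the same route as the paper's: both arguments reduce to the elementary fact that in a graph whose vertices have degree less than $N$, the set of points reachable from a given vertex in at most $S$ steps has size at most $N^{S}$. The paper phrases this as a direct count of the paths of length $i\le S-1$ emanating from each $a\in A$ (giving $|A|\sum_{i=0}^{S-1}N^{i}\le N^{S}|A|$), whereas you package the same estimate as an induction on the radius via the boundary of the ball; the two are interchangeable.
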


\begin{proof}
Fix any $0\le i\le S-1$. Given any element $a$ of $A$, the fact that any vertex of $X$ has degree $\le N$ implies that there are at most $N^{i}$ paths of length $i$ that starts at $a\in A$ and that ends at an element of $A^{+i}$. Thus 
\begin{equation*}
    \left|A^{+S}\right|\le \left|A\right|\sum_{i=0}^{S-1}N^{i} \le N^{S}\left|A\right|
\end{equation*}
as claimed. 
\end{proof}

\begin{lemma}\label{lm:cardinalityofneighborhood2}
Let $X$ be a bounded degree graph. For any finite subset $A\subset X$ and any $S\ge 0$, there is a constant $R>0$ (depending only on $S$ and on $X$) such that $|A^{+S}\setminus A| \le R\cdot |\partial_{X} A|$.
\end{lemma}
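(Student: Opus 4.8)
\textbf{Proof plan for Lemma~\ref{lm:cardinalityofneighborhood2}.}

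The plan is to show that every vertex of $A^{+S}\setminus A$ lies within distance $S$ of some vertex of $\partial_X A$, and then to use the bounded degree hypothesis (via Lemma~\ref{lm:cardinalityofneighborhood}) to bound the number of such vertices by a constant multiple of $|\partial_X A|$. The key geometric observation is the following: if $y\in A^{+S}\setminus A$, pick $a\in A$ with $d_X(a,y)\le S$ and a geodesic $a=x_0, x_1,\dots,x_k=y$ with $k\le S$. Since $x_0\in A$ and $x_k=y\notin A$, there is a first index $i$ with $x_i\notin A$; then $x_{i-1}\in A$ and $(x_{i-1},x_i)\in E(X)$, so $x_i\in\partial_X A$. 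Moreover $d_X(x_i,y)=k-i\le k\le S$, so $y\in (\partial_X A)^{+S}$. This proves $A^{+S}\setminus A\subset (\partial_X A)^{+S}$.

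Once this inclusion is established, the bound is immediate from Lemma~\ref{lm:cardinalityofneighborhood} applied to the finite set $\partial_X A$: one has
\begin{equation*}
    |A^{+S}\setminus A| \le |(\partial_X A)^{+S}| \le N^S |\partial_X A|,
\end{equation*}
where $N\ge 3$ bounds the maximal degree of $X$. Setting $R\defeq N^S$, which depends only on $S$ and on $X$ (through $N$), gives the claim. Note this handles the edge case $S=0$ as well, where $A^{+0}\setminus A=\varnothing$ and the inequality is trivial.

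I do not anticipate a serious obstacle here; the statement is a routine coarse-geometry estimate. The only point requiring a small amount of care is the extraction of a vertex of $\partial_X A$ from the geodesic — one must argue that such a geodesic actually crosses the boundary, which is where we use that geodesics in a graph move between adjacent vertices one step at a time, so the first vertex outside $A$ is necessarily a neighbour of a vertex inside $A$. Everything else is an application of the already-proved neighbourhood cardinality bound.
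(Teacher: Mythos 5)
Your proof is correct and follows the same route as the paper: establish the inclusion of $A^{+S}\setminus A$ in a bounded neighbourhood of $\partial_X A$ (the paper uses $(\partial_X A)^{+(S-1)}$, a marginally tighter version of your $(\partial_X A)^{+S}$, which your own geodesic argument actually yields since the first vertex outside $A$ occurs at index $i\ge 1$), then apply Lemma~\ref{lm:cardinalityofneighborhood}. The paper states the inclusion without proof; your geodesic argument is exactly the standard justification.
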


\begin{proof}
It suffices to note that $A^{+S}\setminus A \subset (\partial_{X}A)^{+(S-1)}$ and to apply Lemma~\ref{lm:cardinalityofneighborhood}. 
\end{proof}

\begin{lemma}\label{lm:preimagesandquasiinverses}
Let $X$ and $Y$ be bounded degree graphs, and let $f\colon X\longrightarrow Y$ be a $(C,K)-$quasi-isometry, with a $(C,K)-$quasi-inverse $\overline{f}\colon Y\longrightarrow X$. Then we have 
\begin{equation*}
    d_{\text{Haus}}\left(\overline{f}(A),f^{-1}(A^{+K})\right)\le (C+2)K
\end{equation*}
for any subset $A\subset Y$.
\end{lemma}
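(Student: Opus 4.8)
The plan is to prove the two Hausdorff-distance inclusions separately, both following from the defining inequalities of a $(C,K)$-quasi-isometry together with the relation $d(\overline{f}\circ f,\mathrm{Id}_X)\le K$ and $d(f\circ \overline{f},\mathrm{Id}_Y)\le K$.

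\textbf{Step 1: $\overline{f}(A)\subset f^{-1}(A^{+K})^{+K}$.} Let $x=\overline{f}(a)$ for some $a\in A$. Then $d_Y(f(x),a)=d_Y(f(\overline{f}(a)),a)\le K$, so $f(x)\in A^{+K}$, which already gives $x\in f^{-1}(A^{+K})$. Hence $\overline{f}(A)\subset f^{-1}(A^{+K})$, which is stronger than what is needed on this side (distance $0$).

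\textbf{Step 2: $f^{-1}(A^{+K})\subset \overline{f}(A)^{+(C+2)K}$.} Let $x\in f^{-1}(A^{+K})$, so $f(x)\in A^{+K}$, i.e.\ there is $a\in A$ with $d_Y(f(x),a)\le K$. Apply $\overline{f}$: using the lower quasi-isometry inequality for $\overline{f}$ (with constants $C,K$), we get
\begin{equation*}
    \frac{1}{C}d_X(x,\overline{f}(a)) - K \le d_Y(f(x),a) \le K,
\end{equation*}
wait --- $\overline{f}$ does not take values related to $f(x)$; instead I should go through the composition $\overline{f}\circ f$. Since $d_X(x,\overline{f}(f(x)))\le K$, it suffices to bound $d_X(\overline{f}(f(x)),\overline{f}(a))$. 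Applying the upper quasi-isometry inequality for $\overline{f}$ to the pair $(f(x),a)$ gives
\begin{equation*}
    d_X(\overline{f}(f(x)),\overline{f}(a)) \le C\, d_Y(f(x),a) + K \le CK + K.
\end{equation*}
Combining via the triangle inequality, $d_X(x,\overline{f}(a)) \le K + CK + K = (C+2)K$, and since $\overline{f}(a)\in\overline{f}(A)$, we conclude $x\in \overline{f}(A)^{+(C+2)K}$.

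\textbf{Conclusion.} From Step 1, $\overline{f}(A)\subset f^{-1}(A^{+K})\subset f^{-1}(A^{+K})^{+(C+2)K}$, and from Step 2, $f^{-1}(A^{+K})\subset \overline{f}(A)^{+(C+2)K}$. By definition of the Hausdorff distance, $d_{\mathrm{Haus}}(\overline{f}(A),f^{-1}(A^{+K}))\le (C+2)K$. I do not anticipate a genuine obstacle here; the only subtlety is bookkeeping the constants correctly and remembering to route the argument through $\overline{f}\circ f$ rather than trying to relate $x$ and $\overline{f}(a)$ directly by a single inequality. One should also note the degenerate cases ($A=\emptyset$, or $\overline{f}(A)$ empty) are handled trivially since both sets are then empty.
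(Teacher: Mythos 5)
Your proof is correct and follows essentially the same route as the paper's: the inclusion $\overline{f}(A)\subset f^{-1}(A^{+K})$ is obtained exactly as in the paper from $d(f\circ\overline{f},\mathrm{Id}_Y)\le K$, and the reverse containment up to $(C+2)K$ uses the identical triangle inequality through $\overline{f}(f(x))$ together with the upper Lipschitz bound for $\overline{f}$. The mid-proof self-correction should be edited out, but the final argument and constants match the paper's.
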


\begin{proof}
Let $A\subset Y$, and let $x\in f^{-1}(A^{+K})$. Then $f(x)\in A^{+K}$, and we may pick $y\in A$ such that $d_{Y}(y, f(x))\le K$. It follows that 
\begin{align*}
    d_{X}(\overline{f}(y), x) &\le d_{X}(\overline{f}(y), \overline{f}(f(x)))+d_{X}(\overline{f}(f(x)), x) \\
    &\le C\cdot d_{Y}(y, f(x))+K+K \\
    &\le (C+2)K
\end{align*}
and since $\overline{f}(y)\in \overline{f}(A)$, we get $x\in \overline{f}(A)^{+(C+2)K}$. 

\smallskip

Conversely, if $x=\overline{f}(y)$ with $y\in A$, then $f(x)=f(\overline{f}(y))$ is at distance at most $K$ from $y\in A$, i.e. $f(x)\in A^{+K}$, so $x\in f^{-1}(A^{+K})$. We conclude that $d_{\text{Haus}}\left(\overline{f}(A),f^{-1}(A^{+K})\right) \le (C+2)K$, as claimed. 
\end{proof}

We now define coarse separation of spaces, as in~\cite{BGT24}. For this, recall that a metric space $(X,d_{X})$ is \textit{$k-$coarsely connected}, with $k>0$, if for any $x,y\in X$, there is a sequence of points $x=x_{0},x_{1},\dots,x_{n-1},x_{n}=y$ such that $d_{X}(x_{i-1},x_{i})\le k$ for any $i=1,\dots, n$. 

\begin{definition}
Let $(X,d_{X})$ be a metric space, and let $\mathcal{W}$ be a family of subsets of $X$. We say that $\mathcal{W}$ \textit{coarsely separates} $X$ if there exist $k>0$ and $L\ge 0$ such that for any $D\ge 0$, there exists $W\in\mathcal{W}$ such that $X\setminus W^{+L}$ has at least two $k-$coarsely connected components with points at distance $\ge D$ from $W$.
\end{definition}

Given a family $\mathcal{W}$ of subsets of $X$, we define its \textit{growth function} as 
\begin{equation*}
    V_{\mathcal{W}}(r)=\sup_{W\in\mathcal{W}, \;w\in W}|W\cap B(w,r)|, \; r\ge 0
\end{equation*}
and we say that $\mathcal{W}$ has \textit{subexponential growth} if $\limsup_{r\rightarrow\infty}\frac{1}{r}\ln(V_{\mathcal{W}}(r))=0$. We denote $\mathcal{M}_{\text{exp}}$ the class of bounded degree graphs such that any coarsely separating family of subsets must have exponential growth. Note that this class is invariant under quasi-isometries.

\smallskip

The following theorem is one of the main results of~\cite{BGT24}. 

\begin{theorem}[{\cite[Theorem~1.4]{BGT24}}]
For $i=1,2$, let $X_{i}$ be either a $(k+1)-$regular tree with $k\ge 2$ or a rank one symmetric space of non-compact type. Then the horocyclic product $X_{1}\bowtie X_{2}$ belongs to $\mathcal{M}_{\text{exp}}$, as well as any direct product of $X_{1}\bowtie X_{2}$ with an arbitrary bounded degree graph. 
\end{theorem}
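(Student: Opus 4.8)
The plan is to show that any coarsely separating family $\mathcal{W}$ of $Z\defeq X_{1}\bowtie X_{2}$ (and more generally of $Z\times Y$ for an arbitrary bounded degree graph $Y$) is forced, for every $D$, to contain a subset of diameter $O(D)$ and cardinality at least $e^{cD}$ for a fixed $c>0$; letting $D\to\infty$ then gives $\limsup_{r\rightarrow\infty}\frac1r\ln V_{\mathcal{W}}(r)>0$, which is exactly the assertion that $\mathcal{W}$ has exponential growth, hence that $Z$ and $Z\times Y$ lie in $\mathcal{M}_{\text{exp}}$. First I would fix the relevant coarse geometry. Each $X_{i}$ is Gromov hyperbolic with a distinguished boundary point $\xi_{i}$, and $Z$ is the set of pairs $(x_{1},x_{2})$ with $b_{1}(x_{1})=-b_{2}(x_{2})$, where $b_{i}$ is a normalized Busemann function towards $\xi_{i}$, equipped with the induced path metric. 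Replacing each $X_{i}$ by a quasi-isometric bounded degree graph (a regular tree, or the $1$-skeleton of a suitable triangulation of the symmetric space) and using that $\mathcal{M}_{\text{exp}}$ is quasi-isometry invariant, we may assume $Z$ is itself a bounded degree graph. It carries a $1$-Lipschitz coarsely surjective height function $h=b_{1}=-b_{2}\colon Z\longrightarrow\Z$ whose level set $h^{-1}(t)$ is coarsely the product of a horosphere of $X_{1}$ and a horosphere of $X_{2}$. Two features of this picture are crucial: (a) for some $c>0$, any ball of radius $r$ in $Z$ meets any level set $h^{-1}(t)$ in $\asymp e^{cr}$ points --- for trees this is the exponential growth of horospheres, and for rank one symmetric spaces it reflects the exponential distortion of horospheres (which are simply connected nilpotent Lie groups) inside the ambient space; and (b) in each $X_{i}$, two points on a common horosphere at distance $2\ell$ have their geodesic rays towards $\xi_{i}$ coalescing (up to bounded error) at depth $\ell$, so that one cannot change the value of $h$ without crossing a full ``slab'' $h^{-1}([a,b])$, while above and below such a slab $Z$ reconnects at a rate governed by $b-a$. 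For $Z\times Y$ one works with $h\circ\pi_{1}$, whose level sets $h^{-1}(t)\times Y$ again satisfy (a), and for which (b) still holds since it involves only the $Z$-coordinate.

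The heart of the matter is then the following claim: there is $\kappa>0$, depending only on $X_{1},X_{2}$ (and, in the product case, not on $Y$) and on the coarse-separation constants $k,L$, such that if $W$ coarsely separates $Z$ (resp.\ $Z\times Y$), then for every $D$ the witnessing subset for depth $D$ meets some ball of radius $\kappa D$ in at least $e^{\kappa^{-1}D}$ points. To prove it, fix $k,L$, let $D\ge 0$, take the witness $W$ and points $p_{1}\in C_{1}$, $p_{2}\in C_{2}$ in distinct $k$-coarsely connected components of the complement of $W^{+L}$, each at distance $\ge D$ from $W$. One analyses how $W$ can separate $p_{1}$ from $p_{2}$ using the structure of $h$ and input (b): every path from $p_{1}$ to $p_{2}$ either changes $h$ by a definite amount --- in which case it must cross a slab $h^{-1}([a,b])$ near $p_{1}$ or $p_{2}$ of width $\Theta(D)$ --- or it stays at roughly constant height, in which case (b) constrains it to a region of the corresponding level set. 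In either case, were $W^{+L}$ to omit more than a fixed proportion of the relevant piece (a slab intersected with $B(p_{1},\Theta(D))$, resp.\ a level-set piece of comparable radius), one could reroute a $k$-coarse path from $p_{1}$ to $p_{2}$ inside the complement of $W^{+L}$ --- moving within level sets and exploiting coalescence of rays to dodge $W^{+L}$ --- contradicting $C_{1}\ne C_{2}$. Hence $W^{+L}$ contains a fixed proportion of a piece of a level set inside a ball of radius $O(D)$, which by (a) has $\asymp e^{cD}$ points; passing from $W^{+L}$ back to $W$ changes only the constants, and the claim follows.

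I expect the main obstacle to be exactly this last claim, and within it the structural analysis showing that any coarse separation of $Z$ is, up to bounded error, of one of the two model types above, together with the ``no-rerouting'' step in each type. This is where the hyperbolic geometry of $X_{1}$ and $X_{2}$ is genuinely used: for spaces without this horocyclic-product structure --- for instance $\Z^{n}$, or a regular tree --- coarse separation is possible by cheap (polynomial-growth, resp.\ bounded-growth) subsets, so the argument must really invoke (b). Two further technical points need care: making all constants uniform, in particular uniform in $Y$ in the product case (which is why one works with $h\circ\pi_{1}$ rather than trying to slice in the $Y$-direction), and handling the degenerate configurations where $p_{1}$ and $p_{2}$ sit at very different heights, or where both deep components of the complement are ``vertical''.
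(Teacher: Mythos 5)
First, a point of comparison: the paper does not prove this statement at all --- it is quoted verbatim as one of the main results of~\cite{BGT24}, so there is no in-paper argument to measure your proposal against. Judged on its own terms, your proposal correctly identifies the shape of what must be shown (a coarsely separating subset is forced to contain, inside a ball of radius $O(D)$, a definite proportion of a horospherical piece, which has exponentially many points), but the entire mathematical content is concentrated in the ``no-rerouting'' claim, and there you assert rather than prove. Saying that if $W^{+L}$ omits a fixed proportion of a slab or level-set piece then ``one could reroute a $k$-coarse path from $p_{1}$ to $p_{2}$ inside the complement'' is precisely the theorem restated in local form: the complement of a sparse subset of a horosphere (or of a slab) need not be coarsely connected in any obvious way, and controlling which components of that complement $p_{1}$ and $p_{2}$ can reach is exactly the difficulty. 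Nothing in your sketch rules out a subexponential $W$ that is cleverly spread across many heights and many horospherical pieces, blocking all paths without containing a dense portion of any single slab; excluding this requires the kind of quantitative coarse-separation machinery developed in~\cite{BGT24} (their embedding theorem and the analysis of separation in hyperbolic spaces and their horospheres), not a local density count.

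Two further points are not merely ``technical care'' but genuine gaps. Your dichotomy --- every path either changes $h$ by a definite amount or stays at roughly constant height --- does not partition anything useful: a separating set must block \emph{all} paths simultaneously, and paths can oscillate in height arbitrarily, so the case analysis as stated does not localize $W$ to a single slab or level set near $p_{1}$ or $p_{2}$. And in the product case $Z\times Y$, a separating subset can be ``diagonal'' with respect to the two factors; passing from coarse separation of $Z\times Y$ to a statement about slices of the form $h^{-1}(t)\times Y$, uniformly in $Y$, requires a separate argument (in~\cite{BGT24} this is handled by a dedicated statement about coarse separation of direct products), which your sketch does not supply. As it stands the proposal is a correct heuristic for \emph{why} the theorem should be true, not a proof.
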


As already mentioned, this theorem provides many examples of amenable groups that belong to $\mathcal{M}_{\text{exp}}$, among which solvable Baumslag-Solitar groups $\text{BS}(1,n)$, $n\ge 2$, lamplighter groups and $\text{SOL}(\Z)$. 

\subsection{Measure-scaling quasi-isometries} The goal of this section is to recall the notion of 
measure-scaling quasi-isometries between two bounded degree graphs introduced in~\cite{GT22}, and useful properties of such maps. 

\begin{definition}
Let $X$ and $Y$ be two bounded degree graphs, and let $f\colon X\longrightarrow Y$ be a quasi-isometry. Let $k>0$. We say that $f$ is \textit{quasi-$k$-to-one} if there exists a constant $C>0$ such that 
\begin{equation*}
    \left|k|A|-|f^{-1}(A)|\right| \le C\cdot\left|\partial_{Y} A\right|
\end{equation*}
for any finite subset $A\subset Y$.
\end{definition}

In this case, we call $f$ a \textit{measure-scaling quasi-isometry}, and we refer to the number $k>0$ as the \textit{scaling factor}. The idea to keep in mind is that the scaling factor quantifies how far is $f$ from being a bijection. This quantification is motivated by the following famous result of Whyte:

\begin{theorem}[{\cite[Theorem~2]{Why99}}]\label{thm:whytetheorem}
A quasi-isometry between two bounded degree graphs is quasi-one-to-one if and only if it lies at finite distance from a bijection. 
\end{theorem}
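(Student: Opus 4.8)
The statement is an equivalence whose two directions have very different characters. The plan is to first dispatch the easy implication --- that a map at bounded distance from a bijection is quasi-one-to-one --- and then to treat the converse, which is the substance of Whyte's theorem~\cite{Why99}. For the easy direction: if $\phi\colon X\to Y$ is a bijection with $d(f,\phi)\le D$ and $A\subset Y$ is finite, then $f(x)\in A$ forces $\phi(x)$ to lie within $D$ of $A$, so $f^{-1}(A)\subset\phi^{-1}(A^{+D})$ and $|f^{-1}(A)|\le|A^{+D}|$; writing $A'$ for the set of $y\in A$ with $B_Y(y,D)\subset A$, every $x$ with $\phi(x)\in A'$ has $f(x)\in A$, so $|f^{-1}(A)|\ge|A'|$. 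Since $A^{+D}\setminus A$ and $A\setminus A'$ both lie in $(\partial_Y A)^{+D}$, Lemmas~\ref{lm:cardinalityofneighborhood} and~\ref{lm:cardinalityofneighborhood2} bound their cardinalities by $C|\partial_Y A|$ with $C$ depending only on $D$ and the degree bound of $Y$, whence $\bigl||A|-|f^{-1}(A)|\bigr|\le C|\partial_Y A|$.

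For the converse, my plan is to realize the bijection as a perfect matching. Fix $R$ larger than the quasi-isometry constant of $f$ and form the bipartite graph $\Gamma_R$ on $X\sqcup Y$ in which $x\in X$ is joined to $y\in Y$ whenever $d_Y(f(x),y)\le R$. Because $f$ is a quasi-isometry of bounded degree graphs, its point-fibres have uniformly bounded size, so $\Gamma_R$ has bounded degree on both sides; moreover a perfect matching of $\Gamma_R$ is exactly the graph of a bijection $\phi\colon X\to Y$ with $d(f,\phi)\le R$, and conversely. Hence it suffices to produce, for some $R$, a perfect matching of $\Gamma_R$, which by the infinite analogue of Hall's marriage theorem (valid for locally finite bipartite graphs) reduces to checking Hall's condition on both sides: $|f(F)^{+R}|\ge|F|$ for every finite $F\subset X$, and $|f^{-1}(F^{+R})|\ge|F|$ for every finite $F\subset Y$.

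The hard part will be verifying Hall's condition, which is where the quasi-one-to-one hypothesis must enter. When $X$ (equivalently $Y$) is non-amenable the hypothesis is automatic --- it holds for any quasi-isometry, since the fibres are bounded and $|\partial_Y A|\ge\varepsilon|A|$ for a fixed $\varepsilon>0$ --- and Hall's condition then follows simply by enlarging $R$: the isoperimetric inequality makes $|f(F)^{+R}|$ grow geometrically in $R$, and coarse surjectivity of $f$ does the same for $|f^{-1}(F^{+R})|$, so both exceed $|F|$ once $R$ is large. The amenable case is the genuine one, and there the naive choice of $R$ is insufficient, since the error $|\partial_Y(F^{+R})|$ produced by quasi-one-to-one can dominate $|F|$. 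Here I would reformulate the hypothesis: the bounded function $e(y)\defeq|f^{-1}(y)|-1$ satisfies $\bigl|\sum_{y\in A}e(y)\bigr|\le C|\partial_Y A|$ for every finite $A\subset Y$, and by a max-flow/min-cut (equivalently Hahn--Banach) argument this forces $e$ to be the divergence of an integer flow on $Y$ that is bounded both in value \emph{and} in range; equivalently, $X$ and $Y$ admit partitions into uniformly bounded pieces $\{P_i\}$ and $\{Q_i\}$ with $|P_i|=|Q_i|$ and $Q_i$ within uniformly bounded Hausdorff distance of $f(P_i)$. Taking for each $i$ any bijection $P_i\to Q_i$ and forming their union then produces the desired bijection at bounded distance from $f$. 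The crux is precisely this ``uniformly local balancing'' --- turning a single global boundary inequality into a transport plan, or matched partition, that is bounded and of bounded range --- and in the amenable case it rests on a F\o lner-set averaging converting the uniform boundary control into cancellation at bounded scale; that is the step I expect to be the main obstacle.
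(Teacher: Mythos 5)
The paper itself offers no proof of this statement: it is quoted directly from Whyte~\cite[theorem~2]{Why99}, so there is no in-paper argument to compare yours against; your proposal has to stand on its own. Your easy direction does: the sandwich $|A'|\le|f^{-1}(A)|\le|A^{+D}|$ via a bijection $\phi$ with $d(f,\phi)\le D$, together with the observation that $A^{+D}\setminus A$ and $A\setminus A'$ both sit inside $(\partial_Y A)^{+D}$ and hence have cardinality $O(|\partial_Y A|)$ by Lemmas~\ref{lm:cardinalityofneighborhood} and~\ref{lm:cardinalityofneighborhood2}, is complete and correct.

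The converse, however, stops exactly where the content of the theorem begins. Your reduction to a perfect matching of the bounded-degree bipartite graph $\Gamma_R$ via the locally finite Hall--Rado theorem is the right skeleton (it is essentially Whyte's own), and your treatment of the non-amenable case is fine. But in the amenable case the passage from the hypothesis $\bigl||A|-|f^{-1}(A)|\bigr|\le C|\partial_Y A|$ to Hall's condition is the entire theorem, and you do not prove it: as you yourself note, the naive bound $|f^{-1}(F^{+R})|\ge|F^{+R}|-C|\partial_Y(F^{+R})|$ is useless because $|\partial_Y(F^{+R})|$ need not be small relative to $|F|$. Your proposed fix --- that the $0$-chain $e(y)=|f^{-1}(y)|-1$, satisfying $\bigl|\sum_{y\in A}e(y)\bigr|\le C|\partial_Y A|$ for all finite $A$, must be the divergence of a flow of uniformly bounded value and range, yielding matched partitions into uniformly bounded pieces --- is precisely the Block--Weinberger/Whyte characterization of boundaries in uniformly finite homology $H_0^{uf}(Y)$. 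That is not a routine consequence of max-flow/min-cut or of F\o lner averaging; it is the substantive lemma underlying Whyte's theorem, and you explicitly defer it (``that is the step I expect to be the main obstacle''). As written, then, the proposal is an accurate road map with the central step missing, not a proof; to close it you would need to either prove the uniformly-finite-homology lemma or verify Hall's condition for $\Gamma_R$ directly from the boundary estimate.
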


It is easy to see that, if $X$ is not amenable, then any quasi-isometry from $X$ to $Y$ is quasi-$k$-to-one for any $k>0$. In particular, if $X$ is not amenable, any quasi-isometry $X\longrightarrow Y$ lies within bounded distance from a bijection. This behaviour contrasts widely with the amenable case, for two reasons. The first one is that a quasi-isometry between two amenable graphs is not necessarily scaling: for instance the quasi-isometry $g\colon \Z\longrightarrow \Z$ given by $g(n)=n$ if $n\ge 0$ and $g(n)=2n$ if $n<0$ is not measure-scaling. The second reason is that, if a quasi-isometry is scaling, the scaling factor is unique:

\begin{lemma}\label{lm:uniquenessofscalingfactors}
Let $X$ and $Y$ be two bounded degree graphs, and let $f\colon X\longrightarrow Y$ be a quasi-isometry. Suppose that $X$ is amenable. If $f$ is quasi-$k$-to-one and quasi-$k'$-to-one, then $k=k'$. 
\end{lemma}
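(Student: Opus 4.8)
The plan is to exploit amenability to derive a contradiction from $k \neq k'$ by testing the two defining inequalities on a sequence of F\o lner sets in $Y$. Suppose $f$ is both quasi-$k$-to-one and quasi-$k'$-to-one, with constants $C$ and $C'$ respectively. First I would observe that since $f$ is a $(C_0, K_0)$-quasi-isometry between bounded degree graphs, $X$ amenable implies $Y$ amenable (amenability is a quasi-isometry invariant for bounded degree graphs), so $Y$ admits a F\o lner sequence $(A_j)_{j\ge 1}$: finite subsets with $|A_j| \to \infty$ and $|\partial_Y A_j| / |A_j| \to 0$.

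Next, for each $j$ the triangle inequality gives
\begin{equation*}
    \bigl| k - k' \bigr| \, |A_j| \;=\; \bigl| k|A_j| - k'|A_j| \bigr| \;\le\; \bigl| k|A_j| - |f^{-1}(A_j)| \bigr| + \bigl| |f^{-1}(A_j)| - k'|A_j| \bigr| \;\le\; (C + C')\, |\partial_Y A_j|.
\end{equation*}
Dividing by $|A_j|$ yields $|k - k'| \le (C+C')\, |\partial_Y A_j| / |A_j|$, and letting $j \to \infty$ forces $|k - k'| = 0$, i.e. $k = k'$.

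The only point that requires a word of justification is that $Y$ is amenable, hence that a F\o lner sequence exists; this is where amenability of $X$ is used (without it, as noted just before the statement, a quasi-isometry can be quasi-$k$-to-one for every $k$, and the conclusion fails). I would either cite the standard fact that amenability of bounded degree graphs is preserved under quasi-isometry, or, if one prefers to stay self-contained, pull back a F\o lner sequence of $X$ through $f$: if $(F_j)$ is F\o lner in $X$, then the sets $f(F_j)$ (or their $K_0$-neighborhoods) are, up to the bounded-degree and quasi-isometry distortion controlled by Lemmas~\ref{lm:cardinalityofneighborhood} and~\ref{lm:cardinalityofneighborhood2}, F\o lner in $Y$. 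There is no real obstacle here; the argument is short and the main (minor) subtlety is simply making sure one invokes amenability of $Y$, not of $X$, when choosing the test sets $A_j \subset Y$.
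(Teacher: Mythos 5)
Your proof is correct and follows essentially the same route as the paper: apply the two defining inequalities to a F\o lner sequence, use the triangle inequality, and let the boundary-to-volume ratio tend to zero. If anything, you are slightly more careful than the paper's own write-up (which takes the F\o lner sequence in $X$ while the definition of quasi-$k$-to-one concerns subsets of $Y$); your remark that one should work with a F\o lner sequence of $Y$, obtained from amenability of $X$ via quasi-isometry invariance, is the right way to phrase it.
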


\begin{proof}
Let $(F_{n})_{n\in\N}$ be a F\o lner sequence for $X$, so that $\frac{\left|\partial_{X}F_{n}\right|}{\left|F_{n}\right|}$ goes to $0$ as $n\rightarrow\infty$. Since $f$ is quasi-$k$-to-one, there is $C>0$ such that 
\begin{equation*}
    \left|k-\frac{\left|f^{-1}(F_{n})\right|}{\left|F_{n}\right|}\right| \le C\cdot \frac{\left|\partial_{X}F_{n}\right|}{\left|F_{n}\right|}
\end{equation*}
for any $n\in\N$. In particular, $\left(\frac{\left|f^{-1}(F_{n})\right|}{\left|F_{n}\right|}\right)_{n\in\N}$ converges to $k$ as $n\rightarrow\infty$, and similarly it also converges to $k'$. Thus $k=k'$.
\end{proof}

The next statement, proved in~\cite{GT22}, records several stability properties for measure-scaling quasi-isometries, with respect to composition and quasi-inverses. 

\begin{theorem}[{\cite[proposition~3.6]{GT22}}]\label{thm:stabilitypropertiesofscalingfactors}
Let $X,Y,Z$ be three bounded degree graphs, and let $f,h\colon X\longrightarrow Y$, $g\colon Y\longrightarrow Z$ be three quasi-isometries. 

\begin{enumerate}[label=(\roman*)]
    \item If $f$ and $h$ are at bounded distance and if $f$ is quasi-$k$-to-one, then $h$ is quasi-$k$-to-one. 
    \item If $f$ is quasi-$k$-to-one and $g$ is quasi-$k'$-to-one, then $g\circ f$ is quasi-$kk'$-to-one.
    \item If $f$ is quasi-$k$-to-one, then any of its quasi-inverses is quasi-$\frac{1}{k}$-to-one.
\end{enumerate}
\end{theorem}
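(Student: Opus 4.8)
The plan is to derive all three items directly from the definition of quasi-$k$-to-one, using throughout two elementary observations about a $(C,K)$-quasi-isometry $\phi$ between bounded degree graphs. First, every fibre $\phi^{-1}(x)$ has diameter $\le CK$ (from $\tfrac1C d(y,y')-K\le d(\phi(y),\phi(y'))=0$ for $y,y'$ in a common fibre), so $|\phi^{-1}(U)|\le N^{CK}|U|$ for every subset $U$ of the target, $N$ bounding the degrees. Second, a short geodesic-crossing argument shows that if $y\in\partial\,\phi^{-1}(A)$ then $\phi(y)\in A^{+(C+K)}\setminus A\subseteq(\partial A)^{+(C+K)}$; combined with Lemma~\ref{lm:cardinalityofneighborhood} this gives $|\partial\,\phi^{-1}(A)|\le N^{CK+C+K}|\partial A|$. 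The same crossing argument puts the ``inner $D$-boundary'' $\{a\in A:B(a,D)\not\subseteq A\}$, as well as $A^{+D}\setminus A$, inside $(\partial A)^{+D}$, hence of size $\le N^{D}|\partial A|$ (the latter is also Lemma~\ref{lm:cardinalityofneighborhood2}). Below, $O(|\partial A|)$ denotes a quantity bounded by a constant (depending on the data, not on $A$) times $|\partial A|$.

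For (i), write $d(f,h)\le D$. A point of $f^{-1}(A)\triangle h^{-1}(A)$ has its $f$- or $h$-image in $A$ but within $D$ of the complement, hence in the inner $D$-boundary of $A$; so $f^{-1}(A)\triangle h^{-1}(A)\subseteq f^{-1}((\partial_Y A)^{+D})\cup h^{-1}((\partial_Y A)^{+D})$, which by the observations above has size $O(|\partial_Y A|)$. Adding $\bigl||f^{-1}(A)|-k|A|\bigr|=O(|\partial_Y A|)$ gives the estimate for $h$. For (ii), since $(g\circ f)^{-1}(A)=f^{-1}(g^{-1}(A))$, I would apply the quasi-$k$-to-one property of $f$ to the set $g^{-1}(A)\subseteq Y$ and the quasi-$k'$-to-one property of $g$ to $A\subseteq Z$, then add, obtaining
\[
\bigl\lvert\, |f^{-1}(g^{-1}(A))|-kk'|A|\, \bigr\rvert\ \le\ C\,|\partial_Y g^{-1}(A)|+k\,C'\,|\partial_Z A|,
\]
and conclude via $|\partial_Y g^{-1}(A)|=O(|\partial_Z A|)$ from the preliminary bound.

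For (iii), let $\overline f\colon Y\to X$ be a $(C,K)$-quasi-inverse of $f$, fix a finite $B\subseteq X$, and set $A\defeq\overline f^{-1}(B)\subseteq Y$. The key step is the sandwich
\[
B^{-K}\ \subseteq\ f^{-1}(A^{+K})\ \subseteq\ B^{+(C+2)K},\qquad B^{-K}\defeq\{x\in B:B(x,K)\subseteq B\}.
\]
The left inclusion holds because for $x\in B^{-K}$ one has $\overline f(f(x))\in B(x,K)\subseteq B$, so $f(x)\in\overline f^{-1}(B)=A$; the right inclusion is Lemma~\ref{lm:preimagesandquasiinverses} together with $\overline f(A)\subseteq B$. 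Since $|B\setminus B^{-K}|$ and $|B^{+(C+2)K}\setminus B|$ are both $O(|\partial_X B|)$, the sandwich yields $\bigl||f^{-1}(A^{+K})|-|B|\bigr|=O(|\partial_X B|)$. On the other hand, applying the quasi-$k$-to-one property of $f$ to $A^{+K}$ and absorbing $|A^{+K}\setminus A|$ and $|\partial_Y(A^{+K})|$ into $O(|\partial_Y A|)$ gives $\bigl|k|A|-|f^{-1}(A^{+K})|\bigr|=O(|\partial_Y A|)$. Adding, dividing by $k$, and using $|\partial_Y A|=|\partial_Y\overline f^{-1}(B)|=O(|\partial_X B|)$ shows that $\overline f$ is quasi-$\tfrac1k$-to-one.

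The main obstacle is the sandwich in (iii): one really needs $f^{-1}(A^{+K})$ to \emph{contain} the bulk $B^{-K}$ of $B$, not merely a $K$-net in it, since a net would only give $|B|\le N^{O(1)}|f^{-1}(A^{+K})|+O(|\partial_X B|)$ and the scaling factor would fail to pass cleanly from $k$ to $\tfrac1k$. (If $X$ is non-amenable then $Y$ is too and (iii) is vacuous, every quasi-isometry out of a non-amenable graph being quasi-$j$-to-one for every $j>0$; so one may freely assume $X$, and hence $Y$, amenable.) Parts (i) and (ii) are routine once the two preliminary observations are in place.
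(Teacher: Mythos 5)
The paper does not actually prove this theorem: it is quoted from \cite{GT22} (Proposition~3.6 there), so there is no in-paper argument to compare against. Your blind proof is correct and self-contained. The two preliminary bounds you isolate --- fibres of a $(C,K)$-quasi-isometry have diameter $\le CK$, hence $|\phi^{-1}(U)|\le N^{CK}|U|$, and $\partial_X\phi^{-1}(A)\subseteq\phi^{-1}\bigl((\partial A)^{+(C+K)}\bigr)$, hence $|\partial_X\phi^{-1}(A)|=O(|\partial A|)$ --- are exactly what is needed to make all the error terms close up. Part (i) via the symmetric difference landing in the inner/outer $D$-boundary, and part (ii) via the telescoping $|kk'|A|-|f^{-1}(g^{-1}(A))||\le k\,|k'|A|-|g^{-1}(A)||+|k|g^{-1}(A)|-|f^{-1}(g^{-1}(A))||$ combined with $|\partial_Y g^{-1}(A)|=O(|\partial_Z A|)$, are routine and correct. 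Part (iii) is the only delicate one, and you identify the right mechanism: the two-sided containment $B^{-K}\subseteq f^{-1}(A^{+K})\subseteq B^{+(C+2)K}$ for $A=\overline f^{-1}(B)$, whose right half is precisely Lemma~\ref{lm:preimagesandquasiinverses} together with $\overline f(\overline f^{-1}(B))\subseteq B$, and whose left half uses $d(\overline f\circ f,\mathrm{Id}_X)\le K$; the discrepancies $|B\setminus B^{-K}|$, $|B^{+(C+2)K}\setminus B|$, $|A^{+K}\setminus A|$ and $|\partial_Y(A^{+K})|$ are all $O(|\partial_X B|)$ or $O(|\partial_Y A|)$ with $|\partial_Y A|=O(|\partial_X B|)$, so the estimate transfers with factor $\tfrac1k$ as claimed. (Implicitly you use that $\overline f^{-1}(B)$ is finite, which holds since preimages of bounded sets under a quasi-isometry between bounded degree graphs are finite; and the statement for \emph{any} quasi-inverse follows from your (i) since all quasi-inverses are at bounded distance.) I see no gap.
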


From this theorem and the uniqueness of the scaling factor, it follows that when $X$ is amenable, there is a well-defined group morphism 
\begin{equation*}
    \text{Sc}\colon \text{QI}_{\text{sc}}(X)\longrightarrow (\R_{>0},\cdot) 
\end{equation*}
where $\text{QI}_{\text{sc}}(X) \defeq \lbrace\text{measure-scaling quasi-isometries }X\longrightarrow X\rbrace/\text{bounded distance}$. The image of this morphism, that we denote $\text{Sc}(X)$, is a subgroup of $\R_{>0}$, that we call the \textit{scaling group} of $X$. 

\smallskip

We will also require in our computations the following characterisation of rational scaling factors.

\begin{theorem}[{\cite[Theorem~4.2]{GT22}}]\label{thm:scalingfactorsandpartitions}
Let $m,n\ge 1$ be two integers and let $f\colon X\longrightarrow Y$ be a quasi-isometry between two bounded degree graphs. The following claims are equivalent:
\begin{enumerate}[label=(\roman*)]
    \item $f\colon X\longrightarrow Y$ is quasi-$\frac{m}{n}$-to-one.
    \item There exist a partition $\mathcal{P}_{X}$ (resp. $\mathcal{P}_{Y}$) of $X$ (resp. of $Y$) with uniformly bounded pieces of size $m$ (resp. $n$) and a bijection $\psi\colon \mathcal{P}_{X}\longrightarrow \mathcal{P}_{Y}$ such that $f$ is at bounded distance from a map $g\colon X\longrightarrow Y$ satisfying $g(P)\subset \psi(P)$ for every $P\in\mathcal{P}_{X}$.
\end{enumerate}
\end{theorem}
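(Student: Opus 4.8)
The plan is to prove the two implications separately. The implication $(2)\Rightarrow(1)$ is a direct counting argument, while $(1)\Rightarrow(2)$ is the substantial part, and I would obtain it by rescaling the problem so as to reduce to Whyte's theorem (Theorem~\ref{thm:whytetheorem}) and then upgrading the resulting bijection to the rigid partition structure via a second matching argument.

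For $(2)\Rightarrow(1)$: assume $\mathcal{P}_X,\mathcal{P}_Y,\psi$ and $g$ are as in (2), with all pieces of diameter $\le D$ and $d(f,g)<\infty$. By Theorem~\ref{thm:stabilitypropertiesofscalingfactors}(i) it suffices to show $g$ is quasi-$\tfrac mn$-to-one. Fixing a finite $A\subseteq Y$, one splits the pieces of $\mathcal{P}_Y$ meeting $A$ into those contained in $A$ and those straddling $\partial_Y A$. Since $g(P)\subseteq\psi(P)$, a piece $P\in\mathcal{P}_X$ is contained in $g^{-1}(A)$ when $\psi(P)\subseteq A$, disjoint from it when $\psi(P)\cap A=\emptyset$, and contributes between $0$ and $m$ elements otherwise; combining this with the analogous count for $|A|$ in terms of $\mathcal{P}_Y$ and using that $\psi$ is a bijection yields $\big|\,|g^{-1}(A)|-\tfrac mn|A|\,\big|\le m\cdot\#\{Q\in\mathcal{P}_Y:Q\cap A\ne\emptyset,\ Q\not\subseteq A\}$. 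Each straddling $Q$ lies in $(\partial_Y A)^{+2D}$, so Lemma~\ref{lm:cardinalityofneighborhood} bounds their number by $N^{2D}|\partial_Y A|$, and $g$ (hence $f$) is quasi-$\tfrac mn$-to-one.

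For $(1)\Rightarrow(2)$: I would first form the graphs $X'\defeq X\times\{1,\dots,n\}$ and $Y'\defeq Y\times\{1,\dots,m\}$, where each fibre is turned into a clique and $(x,i)\sim(x',i)$ whenever $x\sim_X x'$; the coordinate projections are quasi-isometries. The map $F\colon X'\to Y'$, $F(x,i)=(f(x),1)$, is then a quasi-isometry, and a short computation — comparing $|F^{-1}(A)|=n|f^{-1}(A_1)|$ with $m|A_1|$ and then with $|A|$, where $A_1$ is the first-coordinate trace of $A\subseteq Y'$, each discrepancy being controlled by $|\partial_{Y'}A|$ — shows that the hypothesis makes $F$ quasi-one-to-one. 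By Whyte's theorem, $F$ lies at finite distance from a bijection $\Phi\colon X'\to Y'$; writing $\Phi(x,i)=(\phi(x,i),c(x,i))$ we get $S\defeq\sup_{x,i}d_Y(\phi(x,i),f(x))<\infty$. Next I would fix once and for all a partition $\mathcal{Q}$ of $Y$ into subsets of size exactly $n$ and uniformly bounded diameter (a routine greedy/exhaustion construction on the infinite connected graph $Y$, pushing the divisibility defect off to infinity), form the bipartite graph $\mathcal{G}$ on $X\sqcup(\mathcal{Q}\times\{1,\dots,m\})$ with an edge $\{x,(Q,\ell)\}$ whenever $d_Y(f(x),Q)\le S$, and note that $\mathcal{G}$ is locally finite. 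The crucial point is that $\Phi$ forces Hall's condition on both sides: for finite $A\subseteq X$, every token $(x,i)$ with $x\in A$ has $\phi(x,i)$ in some $Q$ with $d_Y(f(x),Q)\le S$, so $A\times\{1,\dots,n\}\subseteq\Phi^{-1}\big(\bigcup\{Q\times\{1,\dots,m\}:Q\in N(A)\}\big)$, and since $\Phi$ is a bijection the right-hand side has cardinality $mn\,|N(A)|$, giving $|N(A)|\ge|A|/m$; the symmetric estimate uses that the projection $X'\to X$ is $n$-to-one. Hence $\mathcal{G}$ admits a perfect matching $x\mapsto(Q_x,\ell_x)$ with $d_Y(f(x),Q_x)\le S$, and setting $\mathcal{P}_Y\defeq\mathcal{Q}$, $\mathcal{P}_X\defeq\{\{x:Q_x=Q\}:Q\in\mathcal{Q}\}$ (pieces of size exactly $m$, uniformly bounded because contained in $f^{-1}(Q^{+S})$), $\psi(\{x:Q_x=Q\})\defeq Q$, and $g(x)\defeq$ a nearest point of $Q_x$ to $f(x)$ gives the desired data, with $d(f,g)\le S$ and $g(P)\subseteq\psi(P)$.

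The main obstacle is precisely the passage from "quasi-$\tfrac mn$-to-one" to the bijection $\Phi$. A naive attempt to build the partitions directly by Hall's theorem fails, because the relevant matching inequality only holds up to an error of the form $C|\partial_Y W|$, which cannot in general be absorbed: in graphs such as $\Z\wr\Z$, balls have boundary comparable to their size, so no amount of thickening makes it negligible. Whyte's theorem is exactly the input that dissolves this boundary defect (this is where the amenability-flavoured content, via uniformly finite homology, is hidden), and the two blow-ups serve to turn the scaling factor $\tfrac mn$ into $1$ while retaining enough structure that, once $\Phi$ is available, Hall's condition for the second — genuine — matching becomes automatic.
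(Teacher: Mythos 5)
The paper does not prove this statement; it is quoted from \cite[theorem~4.2]{GT22}, so there is no internal proof to compare against. Judged on its own merits, your argument is essentially correct and follows the natural (and, as far as I can tell, the cited) route: $(2)\Rightarrow(1)$ by counting straddling pieces against $\partial_Y A$, and $(1)\Rightarrow(2)$ by blowing up both sides to normalise the scaling factor to $1$, invoking Whyte's theorem, and then extracting the partitions by an infinite Hall argument. The counting in $(2)\Rightarrow(1)$, the verification that $F$ is quasi-one-to-one (the discrepancy $\bigl||A|-m|A_1|\bigr|$ is indeed at most $m|\partial_{Y'}A|$ because fibres are cliques), both Hall conditions, and the locally finite marriage/Cantor--Bernstein step are all sound.

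The one step you should not dismiss as ``routine'' is the existence of the auxiliary partition $\mathcal{Q}$ of $Y$ into pieces of size \emph{exactly} $n$ and uniformly bounded diameter. First, this forces $Y$ to be infinite (and connected); for finite $Y$ the statement needs a separate, easy treatment. Second, even for infinite connected bounded-degree graphs the naive greedy/exhaustion idea does not obviously terminate with uniform bounds: on a tree consisting of a ray with arbitrarily long finite paths attached, chopping each branch into $n$-blocks leaves remainders whose positions along the ray can have unbounded gaps, so the leftover vertices cannot simply be paired up afterwards; one must carry bounded-size remainders towards an end in a controlled way (or run yet another Hall-type argument). This is a true and known folklore lemma, but it carries real combinatorial content and deserves either a proof or a precise reference; as written it is the only genuine hole in the argument. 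Everything else stands.
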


\section{Aptolic quasi-isometries}\label{section3}

\subsection{Wreath products} Given two groups $N$ and $G$, their \textit{wreath product} is the group 
\begin{equation*}
    N\wr G \defeq \left(\bigoplus_{G}N\right) \rtimes G
\end{equation*}
where $G$ acts on the direct sum by permuting the coordinates through its initial action on itself by left-multiplication. When $N$ and $G$ are both finitely generated, so is $N\wr G$, and if $N$ is generated by a finite set $T\subset N$ and $G$ by a finite set $S\subset G$, $N\wr G$ is generated by 
\begin{equation*}
    U\defeq \lbrace (\delta_{n}, 1_{G}):n\in T\rbrace\cup\lbrace (\mathbf{1}, s) : s\in S\rbrace
\end{equation*}
where, for any $n\in N$, $\delta_{n}\colon G\longrightarrow N$ is the map given by $\delta_{n}(1_{G})=n$ and $\delta_{n}(g)=1_{N}$ for $g\neq 1_{G}$, and $\mathbf{1}\equiv 1_{N}$ on $G$. Thus, in $\text{Cay}(N\wr G, U)$, two vertices $(c,p)$, $(d,q)$ are adjacent:
\begin{itemize}
    \item either if $c=d$ and $p$ and $q$ are adjacent in $\text{Cay}(G,S)$;
    \item or if $p=q$ and $c,d$ only differ on this vertex, and $c(p)$ and $d(p)$ are adjacent in $\text{Cay}(N,T)$.
\end{itemize}

The classical interpretation of such generating sets is as follows. First, think of an element $c\in \bigoplus_{G}N$ as a \textit{colouring} of the vertices of $\text{Cay}(G,S)$ with colors coming from $N$, with only finitely many vertices having a non-trivial color (and these vertices form the \textit{support} of $c$, denoted $\text{supp}(c)$). Second, think of an element $(c,p)\in N\wr G$ as a pair made of a colouring $c\in \bigoplus_{G}N$ together with an arrow pointing at some vertex $p\in G$. Then, there are two possible moves to go from $(c,p)$ to a neighbouring vertex in $\text{Cay}(N\wr G, U)$:
\begin{itemize}
    \item either we only move the arrow to a neighbouring vertex in $G$, and the colouring stays the same;
    \item or the arrow stays on the vertex where it stands, but changes the color of this vertex, and replaces it with an adjacent color in $\text{Cay}(N,T)$.
\end{itemize}

Hence, to go from $(c,p)$ to another vertex $(d,q)$ in $N\wr G$, the arrow has to move in $G$ from $p$ to $q$ by visiting all vertices of $G$ where $c$ and $d$ differ. At each of these vertices $t\in G$, the color is changed, and goes from $c(t)\in N$ to $d(t)\in N$. 

\smallskip

In the sequel, to shorten notations, we will rather denote the direct sum $\bigoplus_{G}N$ as $N^{(G)}$, and we write its composition law multiplicatively. Moreover, given a subset $A\subset G$, we denote $\mathcal{L}(A)$ the subgroup of colourings supported in $A$. 

\subsection{Aptolicity}

In this part, we recall results from~\cite{GT24b} and~\cite{BGT24} about the generic form of quasi-isometries between wreath products. This strong form of rigidity is called \textit{aptolicity}:

\begin{definition}\label{def:aptolicmap}
Let $M,N,G,H$ be finitely generated groups. A quasi-isometry $q\colon N\wr G\longrightarrow M\wr H$ is \textit{of aptolic form} if there exist two maps $\alpha \colon N^{(G)} \longrightarrow M^{(H)}$ and $\beta\colon G\longrightarrow H$ such that 
\begin{equation*}
    q(c,p)=(\alpha(c),\beta(p))
\end{equation*}
for any $(c,p)\in N\wr G$. A quasi-isometry $q\colon N\wr G\longrightarrow M\wr H$ is \textit{aptolic} if it is of aptolic form and has a quasi-inverse of aptolic form. 
\end{definition}

Roughly speaking, a quasi-isometry is aptolic if it preserves the lamplighter structure. In~\cite{GT24b}, Genevois and Tessera proved that when $N$ and $M$ are finite and $G$ and $H$ are both finitely presented and one-ended, then any quasi-isometry $N\wr G\longrightarrow M\wr H$ is at bounded distance from an aptolic quasi-isometry. Later, with Bensaid, they proved that the same phenomenon occurs when $N$ and $M$ are infinite and of subexponential growth, provided that the base groups $G$ and $H$ belong to $\mathcal{M}_{\text{exp}}$: 

\begin{theorem}[{\cite[Theorem~6.30]{BGT24}}]\label{thm:finitedistancefromaptolicmaps}
Let $A_{1},A_{2}$ be two finitely generated groups of subexponential growth, and let $B_{1},B_{2}$ be two finitely presented groups from $\mathcal{M}_{\text{exp}}$. Then any quasi-isometry $A_{1}\wr B_{1}\longrightarrow A_{2}\wr B_{2}$ is within bounded distance from an aptolic quasi-isometry. 
\end{theorem}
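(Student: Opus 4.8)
The plan is to follow the argument of Bensaid--Genevois--Tessera isolated from the proof of~\cite[theorem~6.30]{BGT24}, whose core is to show that any quasi-isometry between the two wreath products coarsely preserves the \emph{lamp foliation}, aptolicity being then obtained by a straightening procedure. I would begin by recording the ambient structure. The wreath product $A\wr B$ projects $1$-Lipschitzly onto $B$ via $(c,p)\mapsto p$, and the fibers of this projection --- equivalently the left cosets of the lamp subgroup $A^{(B)}$, which I call \emph{sheets} --- partition $A\wr B$ into pairwise isometric subgraphs, each isometric to $A^{(B)}$ with the metric induced from $A\wr B$; moreover the quotient metric on the set of sheets is exactly the word metric of $B$. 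Transversally, the left cosets of the copy of $B$ sitting inside $A\wr B$ foliate it by \emph{horizontal copies}, each sheet meeting each horizontal copy in exactly one point, so that a point $(c,p)$ is recovered from the sheet and the horizontal copy through it. A quasi-isometry of aptolic form is precisely one sending sheets uniformly close to sheets and horizontal copies uniformly close to horizontal copies; so the content of the theorem is that an arbitrary quasi-isometry $q\colon A_{1}\wr B_{1}\longrightarrow A_{2}\wr B_{2}$ does this up to bounded error.

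The main step, and the principal difficulty, is that the lamp foliation is coarsely canonical, and this is where all three hypotheses enter. Since the $A_{i}$ are infinite, $A_{i}\wr B_{i}$ has the thick bigon property of~\cite{GT24a} (which fails for finite lamps), and this is what makes the relevant coarse-separation bookkeeping available. The coarse halfspaces of $B_{i}$ lift to subsets of $A_{i}\wr B_{i}$ that coarsely separate it, the subexponential growth of $A_{i}$ keeping the configurations involved of controlled size; conversely the assumption $B_{i}\in\mathcal{M}_{\text{exp}}$ forces every genuine coarse separation of $A_{i}\wr B_{i}$ to be fibered over a coarse separation of the base, leaving no room for a spurious foliation manufactured from the lamps alone. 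Hence a quasi-isometry, which must carry a coarsely separating family to one of comparable growth, matches the lifted-halfspace families on the two sides; unravelling this matching shows that $q$ carries sheets to within bounded Hausdorff distance of sheets, and a further analysis of how $q$ interacts with these halfspaces constrains the lamp-coordinate of $q(c,p)$ to depend on $c$ only, up to a bound uniform in $p$ --- that is, $q$ coarsely preserves the horizontal foliation as well.

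With both foliations coarsely preserved, one finishes by straightening: recording which sheet and which horizontal copy contain the image yields maps $\beta\colon B_{1}\longrightarrow B_{2}$ and $\alpha\colon A_{1}^{(B_{1})}\longrightarrow A_{2}^{(B_{2})}$ which, from the previous paragraph, are quasi-isometries, and $q$ lies at bounded distance from the map $(c,p)\mapsto(\alpha(c),\beta(p))$, a quasi-isometry of aptolic form. Applying the same analysis to a quasi-inverse $\overline{q}$ of $q$ shows that $\overline{q}$ too is at bounded distance from a quasi-isometry of aptolic form, which is then a quasi-inverse of the aptolic-form straightening of $q$; hence $q$ is at bounded distance from an aptolic quasi-isometry. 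The delicate points throughout are the coarse-separation argument of the second paragraph --- the only place the hypotheses are used for anything beyond routine bookkeeping --- and the verification that the bounds it produces are uniform over the sheets.
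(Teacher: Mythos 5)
This theorem is not proved in the paper: it is quoted from \cite{BGT24} (the proof of theorem~6.30 there), and the sentences following the statement record the intended two-step strategy: (a) the embedding theorem \cite[theorem~6.1]{BGT24} forces any quasi-isometry $q\colon A_1\wr B_1\longrightarrow A_2\wr B_2$ to be \emph{leaf-preserving}, i.e.\ it sends each $B_1$-coset $\lbrace c\rbrace\times B_1$ within uniformly bounded Hausdorff distance of a $B_2$-coset, and a quasi-inverse does the same in the other direction; (b) leaf-preservingness implies aptolicity up to bounded distance, by \cite[corollary~6.11 and lemma~6.12]{GT24a}. Your final ``straightening'' paragraph is an acceptable account of step (b), but your middle paragraph --- which you correctly identify as the place where all the hypotheses must be used --- does not match step (a) and does not supply a working substitute for it.

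Concretely: first, you hang the argument on the thick bigon property and hence on the $A_i$ being infinite, but neither appears in the hypotheses (finite groups have subexponential growth and are covered by the statement), and in this paper the thick bigon property is invoked only for the iterated wreath products of Proposition~\ref{prop:mixingofscalingconditions}, not for aptolicity; the engine behind (a) is the coarse-separation embedding theorem of \cite{BGT24}. Second, the mechanism you describe runs in the wrong direction. One does not first show that the fibers of the projection to the base (your ``sheets'') are coarsely preserved and then deduce preservation of the $B$-cosets; nor does one classify all coarse separations of $A_i\wr B_i$ or lift ``coarse halfspaces of $B_i$'' (which, for $B_i\in\mathcal{M}_{\text{exp}}$, are bounded only by exponential-growth walls and give no useful control). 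Rather, one restricts $q$ to a single leaf $\lbrace c\rbrace\times B_1$, obtaining a quasi-isometric embedding of $B_1$ into $A_2\wr B_2$; the subexponential growth of $A_2$ guarantees that the separating families of $A_2\wr B_2$ arising from the lamp structure have subexponential growth, and $B_1\in\mathcal{M}_{\text{exp}}$ then forbids the image from being coarsely separated by them, forcing it to stay near a single leaf. Preservation of the fibers over the base is a consequence of leaf-preservingness for $q$ and its quasi-inverse, not a prior input. As written, your key sentence (``unravelling this matching shows that $q$ carries sheets to within bounded Hausdorff distance of sheets\dots'') asserts the conclusion of \cite[theorem~6.1]{BGT24} without its proof, while attributing it to tools that are not the ones the cited argument uses.
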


More precisely, it is proved in~\cite{BGT24} that any quasi-isometry $A_{1}\wr B_{1}\longrightarrow A_{2}\wr B_{2}$, with the above assumptions, has the property of being \textit{leaf-preserving}, i.e. it sends any $B_{1}-$coset of $A_{1}\wr B_{1}$ at a uniform bounded distance from a $B_{2}-$coset of $A_{2}\wr B_{2}$ and has a quasi-inverse that does the same with roles of $B_{1}$ and $B_{2}$ reversed. This fact is deduced from a powerful embedding theorem proved in~\cite[Theorem~6.1]{BGT24}. The fact that leaf-preservingness implies aptolicity (up to finite distance) is mentioned 
in~\cite[Theorem~6.28]{BGT24} and actually follows from another observation of Genevois and Tessera, recorded in~\cite[Corollary~6.11 and Lemma~6.12]{GT24a}.

\smallskip

The following characterisation of aptolic quasi-isometries will be crucial for our future goals. 

\begin{proposition}\label{prop:characterisationofaptolicity}
Let $M,N,G,H$ be finitely generated groups, and consider two maps $\alpha\colon N^{(G)}\longrightarrow M^{(H)}$ and $\beta\colon G\longrightarrow H$. Then the map 
\begin{equation*}
    \varphi\colon N\wr G\longrightarrow M\wr H, (c,p)\longmapsto (\alpha(c), \beta(p))
\end{equation*}
is an aptolic quasi-isometry if and only if the following conditions hold:
\begin{enumerate}[label=(\roman*)]
    \item $\alpha\colon N^{(G)}\longrightarrow M^{(H)}$ is a bijection;
    \item $\beta\colon G\longrightarrow H$ is a quasi-isometry;
    \item There exists $Q\ge 0$ such that, for any colourings $c_{1},c_{2}\in N^{(G)}$, the Hausdorff distance between $\text{supp}(\alpha(c_{1})^{-1}\alpha(c_{2}))$ and $\beta(\text{supp}(c_{1}^{-1}c_{2}))$ is at most $Q$;
    \item There exists $L\ge 0$ such that, for any colourings $c_{1},c_{2}\in N^{(G)}$ that differ on a single point $p\in G$ and such that $c_{1}(p),c_{2}(p)$ are adjacent in $N$, $d_{M}(\alpha(c_{1})(t), \alpha(c_{2})(t)) \le L$ for all $t\in H$;
    \item There exists $L'\ge 0$ such that, for any colourings $c_{1},c_{2}\in M^{(H)}$ that differ on a single point $p\in H$ and such that $c_{1}(p),c_{2}(p)$ are adjacent in $M$, $d_{N}(\alpha^{-1}(c_{1})(t), \alpha^{-1}(c_{2})(t)) \le L'$ for all $t\in G$.
\end{enumerate}
In particular, every aptolic quasi-inverse of $\varphi$ is of the form 
\begin{equation*}
    (c,p)\longmapsto (\alpha^{-1}(c), \overline{\beta}(p)),\; (c,p)\in M\wr H
\end{equation*}
where $\overline{\beta}\colon H\longrightarrow G$ is a quasi-inverse of $\beta\colon G\longrightarrow H$. 
\end{proposition}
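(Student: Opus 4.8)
The statement is an ``if and only if'' characterising when a map of the special form $\varphi(c,p)=(\alpha(c),\beta(p))$ is an aptolic quasi-isometry. The plan is to handle the two directions separately, the forward direction being essentially unwinding the definitions and using the geometry of the Cayley graph of $N\wr G$ with the standard generating set $U$, and the converse direction being the more substantial one, where conditions (i)--(v) are combined to produce genuine quasi-isometry constants and an aptolic quasi-inverse.

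For the forward direction, suppose $\varphi$ is an aptolic quasi-isometry with aptolic quasi-inverse $\psi(c,p)=(\alpha'(c),\beta'(p))$. Composing $\varphi$ and $\psi$ in both orders and using that each composite is at bounded distance from the identity, one reads off on the first coordinate that $\alpha'\circ\alpha$ and $\alpha\circ\alpha'$ are at bounded distance from $\mathrm{id}$ on $N^{(G)}$, resp. $M^{(H)}$; but two colourings at bounded distance in the wreath product metric must be \emph{equal} (the lamp part is discrete and moving a single lamp a bounded amount in $N$ is cheap, yet changing \emph{which} lamps are lit forces the arrow to travel, so ``bounded distance'' on colourings with a fixed arrow position forces equality once one also notes the arrow positions match up) — more carefully, $\alpha$ is injective because $\varphi$ is a quasi-isometry and two colourings with equal image would give two points of $N\wr G$ with the same $\varphi$-image but which can be taken arbitrarily far apart, and surjectivity up to the coarseness is upgraded to genuine surjectivity using the quasi-inverse; this gives (i). Projecting to the $G$-coordinate, $\beta$ inherits the quasi-isometry inequalities from $\varphi$ restricted to a single $G$-coset $\mathcal{L}(\{1_G\})\cdot G$ (on which the metric restricts, up to additive error, to the word metric of $G$), giving (ii). Condition (iii) comes from comparing, for two colourings $c_1,c_2$, the distance $d_{N\wr G}((c_1,p),(c_2,p))$, which is comparable to the size of a Steiner-type connected subgraph of $\mathrm{Cay}(G,S)$ spanning $\mathrm{supp}(c_1^{-1}c_2)\cup\{p\}$ together with the lamp costs, with the analogous quantity downstairs; since $\varphi$ distorts distances boundedly and $\beta$ is a quasi-isometry, the two support sets must be within bounded Hausdorff distance after applying $\beta$ — the cleanest way is to note that adjacent vertices of $N\wr G$ map to uniformly-close vertices of $M\wr H$, so a lamp position can be ``visited and lit'' downstairs only near the $\beta$-image of where it was visited upstairs. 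Conditions (iv) and (v) are the special case of bounded distortion of edges of the second type (changing one lamp to an adjacent colour) for $\varphi$ and for its quasi-inverse respectively.

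For the converse, assume (i)--(v). The plan is to show directly that $\varphi$ satisfies the two quasi-isometry axioms and exhibit the aptolic quasi-inverse $\overline{\varphi}(c,p)=(\alpha^{-1}(c),\overline{\beta}(p))$, where $\overline{\beta}$ is a fixed quasi-inverse of $\beta$. The upper bound $d_{M\wr H}(\varphi(x),\varphi(y))\le C\,d_{N\wr G}(x,y)+K$ is obtained, via Lemma~\ref{lm:lipschitzmap}, by checking it on edges: an edge of the first type $(c,p)\sim(c,q)$ maps to $(\alpha(c),\beta(p))\sim_{\text{bdd}}(\alpha(c),\beta(q))$, which are at distance $\le C_\beta$ since $\beta$ is a quasi-isometry (this also needs (iii) with $c_1=c_2$... rather, it is immediate); an edge of the second type $(c_1,p)\sim(c_2,p)$ maps to $(\alpha(c_1),\beta(p))$ and $(\alpha(c_2),\beta(p))$ whose distance is controlled because by (iii) the symmetric difference of supports downstairs lies in a bounded neighbourhood of $\beta(\{p\})=\{\beta(p)\}$, hence is a bounded set near $\beta(p)$, and on that bounded set each lamp has moved a distance $\le L$ by (iv), so the total cost (travel of the arrow over a bounded region plus bounded lamp changes) is $\le K'$. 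The lower bound is proved the same way after pre-composing with a quasi-inverse: apply the upper-bound argument to $\overline{\varphi}$ (which satisfies the hypotheses with the roles of $N\wr G$ and $M\wr H$ swapped, using (i), (ii) with $\overline\beta$, (iii) again since Hausdorff distance of supports is symmetric under $\alpha\leftrightarrow\alpha^{-1}$ and $\beta\leftrightarrow\overline\beta$, and (v), (iv)) to get that $\overline{\varphi}$ is Lipschitz, and then $d_{N\wr G}(x,y)\le \mathrm{Lip}(\overline\varphi)\cdot d_{M\wr H}(\varphi(x),\varphi(y))+K''$ after absorbing the fact that $\overline\varphi\circ\varphi$ is at bounded distance from the identity — which in turn follows from (i) ($\alpha^{-1}\circ\alpha=\mathrm{id}$) and $\overline\beta\circ\beta$ being at bounded distance from $\mathrm{id}_G$. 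Coarse density of the image of $\varphi$ follows from surjectivity of $\alpha$ in (i) together with coarse density of $\beta(G)$ in $H$. The final ``in particular'' clause — that every aptolic quasi-inverse has the stated form — follows because if $(c,p)\mapsto(\alpha''(c),\beta''(p))$ is any aptolic quasi-inverse, then $\alpha''\circ\alpha$ is at bounded distance from $\mathrm{id}$ on colourings, hence (by the equality-at-bounded-distance principle for colourings) $\alpha''=\alpha^{-1}$, and similarly $\beta''\circ\beta$ at bounded distance from $\mathrm{id}_G$ forces $\beta''$ to be a quasi-inverse of $\beta$.

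\textbf{Main obstacle.} The delicate point is the rigorous passage between distances in $N\wr G$ and the combinatorial data (Steiner-region spanning $\mathrm{supp}(c_1^{-1}c_2)\cup\{p,q\}$ in $\mathrm{Cay}(G,S)$, plus $\sum_t d_N(c_1(t),c_2(t))$ over that support): one needs the standard but slightly technical estimate that $d_{N\wr G}((c_1,p),(c_2,q))$ is comparable (up to multiplicative and additive constants depending only on the generating sets) to $\pi(c_1^{-1}c_2;p,q)+\sum_{t}d_N(c_1(t),c_2(t))$, where $\pi$ is the length of a shortest connected subgraph of $\mathrm{Cay}(G,S)$ containing $p$, $q$ and $\mathrm{supp}(c_1^{-1}c_2)$. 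Once this ``wreath distance formula'' is in hand, extracting (iii) in the forward direction and, conversely, controlling edge-images in the converse direction are both routine; so I would either cite it or prove it as a preliminary lemma. A secondary subtlety is making sure all the constants in the converse direction are genuinely uniform — i.e. the bound on $d_{M\wr H}(\varphi(c_1,p),\varphi(c_2,p))$ for a second-type edge must not depend on $p$ or on the colourings, which is exactly why (iii) is phrased with a single $Q$ and (iv) with a single $L$ quantified over \emph{all} colourings.
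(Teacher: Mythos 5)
Your proposal follows essentially the same route as the paper: the forward direction reads (iv)--(v) off the images of second-type edges (the paper simply cites \cite[proposition~3.1]{GT24b} for (i)--(iii) and the final clause, whereas you sketch direct arguments, but they are the standard ones), and the converse direction is exactly the paper's: verify Lipschitzness edge-by-edge via Lemma~\ref{lm:lipschitzmap}, using (iii) to confine the downstairs support change to $B_{H}(\beta(p),Q)$ and (iv) to bound each lamp move, then run the symmetric argument for $\overline{\varphi}=(\alpha^{-1},\overline{\beta})$ and deduce the lower bound from $\overline{\varphi}\circ\varphi\approx\mathrm{Id}$. Your remark that the transferred version of (iii) for $(\alpha^{-1},\overline{\beta})$ is ``symmetric'' is a little glib --- the paper isolates this as Claim~\ref{claim3.4} and it needs a short computation applying $\overline{\beta}$ and using $\overline{\beta}\circ\beta\approx\mathrm{Id}_{G}$ --- but it is routine. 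Also, the paper does not need the Steiner-tree distance formula you flag as the main obstacle for the converse: the crude bound $L\cdot|B_{H}(\beta(p),Q)|$ on the cost of a second-type edge suffices.

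One assertion you make is false as stated and you use it twice: ``two colourings at bounded distance in the wreath product metric must be equal.'' They need not be --- $(c_{1},p)$ and $(c_{2},p)$ with $c_{1},c_{2}$ differing by one generator at the single lamp $p$ are at distance $1$. The correct statement, which you essentially supply when proving injectivity of $\alpha$, is that if $(d,q)$ is within distance $K$ of $(c,p)$ then $\mathrm{supp}(c^{-1}d)\subset B_{G}(p,K)$; so to conclude $\alpha''\circ\alpha=\mathrm{id}$ (for the ``in particular'' clause, and for surjectivity of $\alpha$) you must let the arrow position $p$ range over all of the (infinite) base group, so that the support of the discrepancy is forced to be empty. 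With that substitution the argument goes through.
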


\begin{proof}
Suppose first that $\varphi\colon N\wr G\longrightarrow M\wr H$ is a $(C,K)-$quasi-isometry.
The proof of \textit{(i)}, \textit{(ii)}, \textit{(iii)} and of the last claim of the statement follow the same lines as in~\cite[Proposition~3.1]{GT24b}.  

\smallskip

Let us then turn to \textit{(iv)}. Let $c_{1},c_{2}\in N^{(G)}$ be two colourings of $G$ that differ only on $p\in G$, and take adjacent values on that point. This means that $(c_{1},p)$ and $(c_{2},p)$ are neighbours in $N\wr G$, and thus 
\begin{align*}
    d_{M\wr H}((\alpha(c_{1}),\beta(p)), (\alpha(c_{2}),\beta(p)))&=d_{M\wr H}(\varphi(c_{1},p), \varphi(c_{2},p)) \\
    &\le C\cdot d_{N\wr G}((c_{1},p), (c_{2},p))+K \\
    &=C+K.
\end{align*}
This distance being less than $C+K$ implies that $\alpha(c_{1})$ and $\alpha(c_{2})$ can only differ on points at distance $\le C+K$ from $\beta(p)$ and, at each point $t\in H$ where they effectively differ, one can go in $M$ from $\alpha(c_{1})(t)$ to $\alpha(c_{2})(t)$ in at most $C+K$ steps. Thus $d_{M}(\alpha(c_{1})(t), \alpha(c_{2})(t))\le C+K$ for any $t\in H$. 

\smallskip

The proof of \textit{(v)} is analogous, using a quasi-inverse $\overline{\varphi}$ of $\varphi$ that takes the form 
\begin{equation*}
    (c,p)\longmapsto (\alpha^{-1}(c), \overline{\beta}(p)),\; (c,p)\in M\wr H
\end{equation*}
where $\overline{\beta}\colon H\longrightarrow G$ is a quasi-inverse of $\beta$. 

\smallskip

Conversely, assume that $\textit{(i)}-\textit{(v)}$ hold. Fix constants $C\ge 1,K\ge 0$ such that $\beta\colon G\longrightarrow H$ and a quasi-inverse $\overline{\beta}\colon H\longrightarrow G$ are $(C,K)-$quasi-isometries, and such that $\overline{\beta}\circ\beta$, $\beta\circ\overline{\beta}$ are at distance $\le K$ from $\text{Id}_{G}$, $\text{Id}_{H}$ respectively. 

\smallskip

We start by showing that $\varphi$ is Lipschitz. Let $a=(c_{1},p_{1})$, $b=(c_{2},p_{2})$ be two adjacent vertices in $N\wr G$. We distinguish two cases:
\begin{itemize}
    \item First, say that $c_{1}=c_{2}$ and that $p_{1}$ and $p_{2}$ are neighbours in $G$. Then we have 
    \begin{align*}
        d_{M\wr H}(\varphi(a),\varphi(b))&=d_{M\wr H}\left((\alpha(c_{1}),\beta(p_{1})), (\alpha(c_{1}),\beta(p_{2}))\right) \\
        &=d_{H}(\beta(p_{1}), \beta(p_{2})) \\
        &\le C\cdot d_{G}(p_{1},p_{2})+K \\
        &=C+K.
    \end{align*}
    \item Now, assume that $p_{1}=p_{2}$ and that $c_{1},c_{2}$ differ only on $p_{1}$, and that $c_{1}(p_{1})$ and $c_{2}(p_{1})$ are adjacent in $N$. From \textit{(iii)}, we know that $\text{supp}(\alpha(c_{1})^{-1}\alpha(c_{2}))\subset B_{H}(\beta(p_{1}),Q)$ and, from \textit{(iv)}, we know that for any $t\in \text{supp}(\alpha(c_{1})^{-1}\alpha(c_{2}))$, there is a path of length $\le L$ in $M$ from $\alpha(c_{1})(t)$ to $\alpha(c_{1})(t)$. This yields the estimate
    \begin{align*}
        d_{M\wr H}(\varphi(a),\varphi(b))&=d_{M\wr H}\left((\alpha(c_{1}),\beta(p_{1})), (\alpha(c_{2}),\beta(p_{1}))\right) \\
        &\le L\cdot \left|B_{H}(\beta(p_{1}),Q)\right| \\
        &\le L\cdot D^{Q}
    \end{align*}
    where $D\ge 3$ is a fixed integer larger than the degree of a vertex in $\text{Cay}(H,T)$.
\end{itemize}
In all cases, we conclude that 
\begin{equation*}
    d_{M\wr H}(\varphi(a),\varphi(b)) \le \max(C+K, L\cdot D^{Q})
\end{equation*}
so that $\varphi$ is \mbox{$\max(C+K, L\cdot D^{Q})-$}Lipschitz according to Lemma~\ref{lm:lipschitzmap}. 

\smallskip

Now, consider the map 
\begin{align*}
    \psi \colon M\wr H &\longrightarrow N\wr G \\
    (c,p)&\longmapsto (\alpha^{-1}(c), \overline{\beta}(p)). 
\end{align*}
For any $(c,p)\in N\wr G$, one computes that 
\begin{align*}
    d_{N\wr G}(\psi\circ\varphi(c,p), (c,p))&=d_{N\wr G}((c, \overline{\beta}(\beta(p))), (c,p)) \\
    &=d_{G}(\overline{\beta}\circ\beta(p), p) \\
    &\le K
\end{align*}
so $\psi\circ\varphi$ is at distance $\le K$ from $\text{Id}_{N\wr G}$, and similarly, using that $\beta\circ\overline{\beta}$ is at distance $\le K$ from $\text{Id}_{H}$, one shows that $\varphi\circ \psi$ is at distance $\le K$ from $\text{Id}_{M\wr H}$. 

\smallskip

Thus it only remains to prove that $\psi$ is Lipschitz. For this, note that:

\begin{claim}\label{claim3.4}
There exists $Q'\ge 0$ such that, for any $c_{1},c_{2}\in M^{(H)}$, the Hausdorff distance between $\text{supp}(\alpha^{-1}(c_{1})^{-1}\alpha^{-1}(c_{2}))$ and $\overline{\beta}(\text{supp}(c_{1}^{-1}c_{2}))$ is at most $Q'$.
\end{claim}

{
\renewcommand{\proofname}{Proof of Claim \ref{claim3.4}.}
\renewcommand{\qedsymbol}{$\blacksquare$}
\begin{proof}
Fix $c_{1},c_{2}\in M^{(H)}$. Applying \textit{(iii)} to $\alpha^{-1}(c_{1})$ and $\alpha^{-1}(c_{2})$, we have  
\begin{equation*}
    d_{\text{Haus}}\left(\text{supp}(c_{1}^{-1}c_{2}), \beta\left(\text{supp}(\alpha^{-1}(c_{1})^{-1}\alpha(c_{2}))\right)\right) \le Q
\end{equation*}
and applying $\overline{\beta}$, it follows that 
\begin{equation*}
    d_{\text{Haus}}\left(\overline{\beta}(\text{supp}(c_{1}^{-1}c_{2})), \overline{\beta}\left(\beta\left(\text{supp}(\alpha^{-1}(c_{1})^{-1}\alpha(c_{2}))\right)\right)\right) \le C\cdot Q+K.
\end{equation*}
As the Hausdorff distance between $\overline{\beta}\left(\beta\left(\text{supp}(\alpha^{-1}(c_{1})^{-1}\alpha(c_{2}))\right)\right)$ and $\text{supp}(\alpha^{-1}(c_{1})^{-1}\alpha(c_{2}))$ is bounded by $K$, we get the claim with $Q'\defeq C\cdot Q+2K$. 
\end{proof}
\renewcommand{\qedsymbol}{$\square$}
}
Using Claim~\ref{claim3.4} and \textit{(v)}, computations as the ones above show that $\psi$ sends any two adjacent vertices of $M\wr H$ to vertices that are at distance at most $\max(C+K, L'\cdot D'^{Q'})$ in $N\wr G$, where $D'\ge 3$  is a fixed integer larger than the degree of a vertex in $\text{Cay}(G,S)$. Invoking once again Lemma~\ref{lm:lipschitzmap}, we conclude that $\psi$ is $\max(C+K, L'\cdot D'^{Q'})-$Lipschitz. Finally, we get that 
\begin{align*}
d_{M\wr H}(\varphi(a),\varphi(b))&\ge \frac{1}{\max(C+K, L'\cdot D'^{Q'})}d_{N\wr G}\left(\psi(\varphi(a)), \psi(\varphi(b))\right) \\
&\ge \frac{1}{\max(C+K, L'\cdot D'^{Q'})}(d_{N\wr G}(a,b)-2K) 
\end{align*}
for any $a,b\in N\wr G$, which concludes the proof that $\varphi\colon N\wr G\longrightarrow M\wr H$ is a quasi-isometry, with $\psi\colon M\wr H\longrightarrow N\wr G$ as a quasi-inverse. 
\end{proof}

\smallskip

Here is then the key statement that will provide us with the rigidity we are looking for. 

\begin{proposition}\label{prop:finiteunionofcosets}
Let $N,M,G,H$ be finitely generated groups, and let $\varphi\colon N\wr G\longrightarrow M\wr H$ be an aptolic quasi-isometry, i.e. there are two maps $\alpha\colon N^{(G)}\longrightarrow M^{(H)}$ and $\beta\colon G\longrightarrow H$ such that $\varphi(c,p)=(\alpha(c),\beta(p))$ for any $(c,p)\in N\wr G$. 

For any quasi-inverse $\overline{\beta}$ of $\beta$, there exists a constant $Q\ge 0$ such that, for any finite subset $A\subset G$ and any $Q'\ge Q$, $\alpha^{-1}(\mathcal{L}(\beta(A)^{+Q'}))$ is a union of cosets of $\mathcal{L}(A)$; and conversely, for any finite subset $B\subset H$ and any $Q'\ge Q$, $\alpha(\mathcal{L}(\overline{\beta}(B)^{+Q'}))$ is a union of cosets of $\mathcal{L}(B)$.
\end{proposition}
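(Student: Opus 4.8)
The plan is to reduce the statement to the support–control property \textit{(iii)} of Proposition~\ref{prop:characterisationofaptolicity}, together with Claim~\ref{claim3.4}, after recording the elementary observation that cosets of $\mathcal{L}(A)$ can be read off from supports. Precisely, for $c_{1},c_{2}\in N^{(G)}$ one has that $c_{1}$ and $c_{2}$ lie in the same coset of $\mathcal{L}(A)$ if and only if $\text{supp}(c_{1}^{-1}c_{2})\subset A$: indeed $\mathcal{L}(A)$ is by definition the set of colourings supported in $A$, the group law on $N^{(G)}$ is coordinatewise, and $\text{supp}(fg)\subset\text{supp}(f)\cup\text{supp}(g)$ with $\text{supp}(f^{-1})=\text{supp}(f)$. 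Consequently a subset $\mathcal{S}\subset N^{(G)}$ is a union of cosets of $\mathcal{L}(A)$ exactly when it is \emph{$\mathcal{L}(A)$-saturated}, meaning that $c_{1}\in\mathcal{S}$ together with $\text{supp}(c_{1}^{-1}c_{2})\subset A$ force $c_{2}\in\mathcal{S}$. So the two assertions of the proposition become two saturation statements.

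First I would fix the constant. By Proposition~\ref{prop:characterisationofaptolicity}, $\alpha$ is a bijection and there is $Q_{0}\ge 0$ such that $d_{\text{Haus}}(\text{supp}(\alpha(c_{1})^{-1}\alpha(c_{2})),\beta(\text{supp}(c_{1}^{-1}c_{2})))\le Q_{0}$ for all $c_{1},c_{2}\in N^{(G)}$. Having fixed the quasi-inverse $\overline{\beta}$, Claim~\ref{claim3.4} provides $Q_{1}\ge 0$ with the symmetric estimate $d_{\text{Haus}}(\text{supp}(\alpha^{-1}(d_{1})^{-1}\alpha^{-1}(d_{2})),\overline{\beta}(\text{supp}(d_{1}^{-1}d_{2})))\le Q_{1}$ for all $d_{1},d_{2}\in M^{(H)}$. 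I then set $Q\defeq\max(Q_{0},Q_{1})$.

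For the first assertion, fix a finite $A\subset G$ and $Q'\ge Q$, take $c_{1}\in\alpha^{-1}(\mathcal{L}(\beta(A)^{+Q'}))$ and let $c_{2}$ satisfy $\text{supp}(c_{1}^{-1}c_{2})\subset A$. Writing $\alpha(c_{2})=\alpha(c_{1})\cdot(\alpha(c_{1})^{-1}\alpha(c_{2}))$ gives $\text{supp}(\alpha(c_{2}))\subset\text{supp}(\alpha(c_{1}))\cup\text{supp}(\alpha(c_{1})^{-1}\alpha(c_{2}))$. The first set is contained in $\beta(A)^{+Q'}$ by the choice of $c_{1}$; the second is contained, by property \textit{(iii)}, in $\beta(\text{supp}(c_{1}^{-1}c_{2}))^{+Q_{0}}\subset\beta(A)^{+Q_{0}}\subset\beta(A)^{+Q'}$. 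Hence $\alpha(c_{2})\in\mathcal{L}(\beta(A)^{+Q'})$, i.e. $c_{2}\in\alpha^{-1}(\mathcal{L}(\beta(A)^{+Q'}))$, which is exactly the desired saturation. The converse is obtained by the identical computation, with $\alpha^{-1},\overline{\beta}$ and Claim~\ref{claim3.4} playing the roles of $\alpha,\beta$ and \textit{(iii)}: for finite $B\subset H$, $Q'\ge Q$, $d_{1}\in\alpha(\mathcal{L}(\overline{\beta}(B)^{+Q'}))$ and $d_{2}$ with $\text{supp}(d_{1}^{-1}d_{2})\subset B$, one gets $\text{supp}(\alpha^{-1}(d_{2}))\subset\text{supp}(\alpha^{-1}(d_{1}))\cup\text{supp}(\alpha^{-1}(d_{1})^{-1}\alpha^{-1}(d_{2}))\subset\overline{\beta}(B)^{+Q'}\cup\overline{\beta}(B)^{+Q_{1}}=\overline{\beta}(B)^{+Q'}$, so $d_{2}\in\alpha(\mathcal{L}(\overline{\beta}(B)^{+Q'}))$.

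I do not expect a genuine obstacle here; the argument is pure bookkeeping with supports. The two points worth flagging are that $Q$ must be chosen large enough to absorb both transfer constants $Q_{0}$ and $Q_{1}$ simultaneously, using the monotonicity $S\le S'\Rightarrow C^{+S}\subset C^{+S'}$ of neighbourhoods; and that $Q$ is permitted to depend on the chosen quasi-inverse $\overline{\beta}$ — which is precisely what the statement allows — since Claim~\ref{claim3.4}, hence $Q_{1}$, is formulated relative to a fixed $\overline{\beta}$.
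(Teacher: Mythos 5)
Your proof is correct, but it takes a more direct route than the paper. The paper first establishes an auxiliary lemma (Lemma~\ref{lm:finiteunionofcosets}), proved by induction on $|A|$, which derives the inclusion $\alpha(c\mathcal{L}(A))\subset\alpha(c)\mathcal{L}(\beta(A)^{+Q})$ using \emph{only} the single-point consequence of support control (if $c_{1},c_{2}$ differ at one point $p$, then $\alpha(c_{1}),\alpha(c_{2})$ differ only inside $B_{H}(\beta(p),Q)$); the proposition then follows by observing that $\alpha^{-1}(\mathcal{L}(\beta(A)^{+Q'}))$ is stable under right multiplication by $\mathcal{L}(A)$, which is exactly your saturation condition in different clothing. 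You instead invoke the full strength of Proposition~\ref{prop:characterisationofaptolicity}\textit{(iii)} for an \emph{arbitrary} pair of colourings, which yields $\text{supp}(\alpha(c_{1})^{-1}\alpha(c_{2}))\subset\beta(\text{supp}(c_{1}^{-1}c_{2}))^{+Q_{0}}\subset\beta(A)^{+Q_{0}}$ in one step and makes the induction unnecessary. Both arguments are sound; yours is shorter, while the paper's lemma is stated under a strictly weaker hypothesis and is therefore a slightly more robust intermediate tool. Two further points in your favour: your reduction of ``union of cosets of $\mathcal{L}(A)$'' to $\mathcal{L}(A)$-saturation via supports is correct (note $\mathcal{L}(A)$ is a direct factor of $N^{(G)}$, so left and right cosets coincide and there is no ambiguity), and you explicitly carry out the ``conversely'' half via Claim~\ref{claim3.4}, which the paper leaves to symmetry; your choice $Q=\max(Q_{0},Q_{1})$, with $Q_{1}$ depending on the fixed quasi-inverse $\overline{\beta}$, matches what the statement permits.
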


Recall that, for a subset $A\subset G$, $\mathcal{L}(A)$ stands for the subgroup of $N^{(G)}$ of colourings supported on $A$. 

The proof of this observation requires two intermediate lemmas.

\begin{lemma}\label{lm:finiteunionofcosets}
Let $N,M,G,H$ be finitely generated groups, and consider two maps $\alpha\colon N^{(G)}\longrightarrow M^{(H)}$ and $\beta\colon G\longrightarrow H$. Suppose that there exists $Q\ge 0$ such that, for any $c_{1},c_{2}\in N^{(G)}$ with $\text{supp}(c_{1}^{-1}c_{2})\subset \lbrace p\rbrace$ for some $p\in G$, we have $\text{supp}(\alpha(c_{1})^{-1}\alpha(c_{2})) \subset B_{H}(\beta(p),Q)$. 

Then, for any finite subset $A\subset X$ and any colouring $c\in N^{(G)}$, we have 
\begin{equation*}
    \alpha(c\mathcal{L}(A))\subset \alpha(c)\mathcal{L}(\beta(A)^{+Q}).
\end{equation*}
\end{lemma}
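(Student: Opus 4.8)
The plan is to deduce the statement from the single-point hypothesis by writing an arbitrary colouring supported in $A$ as a product of colourings supported at single points, and then telescoping the images under $\alpha$.

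First I would fix a finite subset $A\subset G$, a colouring $c\in N^{(G)}$, and an element $d\in\mathcal{L}(A)$, so that $\mathrm{supp}(d)\subseteq A$; the goal is to show that $\mathrm{supp}\big(\alpha(c)^{-1}\alpha(cd)\big)\subseteq\beta(A)^{+Q}$, since this says exactly $\alpha(cd)\in\alpha(c)\mathcal{L}(\beta(A)^{+Q})$. If $\mathrm{supp}(d)=\varnothing$ then $cd=c$ and the conclusion is trivial, so assume $\mathrm{supp}(d)=\{p_{1},\dots,p_{k}\}$ with $k\ge 1$, and for $1\le j\le k$ let $d_{j}\in N^{(G)}$ be the colouring with $d_{j}(p_{j})=d(p_{j})$ and $d_{j}(g)=1_{N}$ for $g\neq p_{j}$. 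Since colourings supported at distinct points of $G$ commute in $N^{(G)}$ and the $d_{j}$ agree pointwise with $d$, one has $d=d_{1}d_{2}\cdots d_{k}$.

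Next I would introduce the chain $c_{0}\defeq c$ and $c_{j}\defeq cd_{1}\cdots d_{j}$ for $1\le j\le k$, so that $c_{k}=cd$. For each $j$ one has $c_{j-1}^{-1}c_{j}=d_{j}$, which is supported on the single point $p_{j}$, so the hypothesis of the lemma applies and yields
\begin{equation*}
\mathrm{supp}\big(\alpha(c_{j-1})^{-1}\alpha(c_{j})\big)\subseteq B_{H}(\beta(p_{j}),Q)\subseteq\beta(A)^{+Q},
\end{equation*}
where the last inclusion holds because $p_{j}\in\mathrm{supp}(d)\subseteq A$, so $\beta(p_{j})\in\beta(A)$.

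Finally I would telescope: writing
\begin{equation*}
\alpha(c)^{-1}\alpha(cd)=\big(\alpha(c_{0})^{-1}\alpha(c_{1})\big)\big(\alpha(c_{1})^{-1}\alpha(c_{2})\big)\cdots\big(\alpha(c_{k-1})^{-1}\alpha(c_{k})\big),
\end{equation*}
and using the elementary fact that the support of a product of colourings in $M^{(H)}$ is contained in the union of the supports of the factors, I get $\mathrm{supp}\big(\alpha(c)^{-1}\alpha(cd)\big)\subseteq\bigcup_{j=1}^{k}B_{H}(\beta(p_{j}),Q)\subseteq\beta(A)^{+Q}$. Since $d\in\mathcal{L}(A)$ was arbitrary, this gives $\alpha(c\mathcal{L}(A))\subseteq\alpha(c)\mathcal{L}(\beta(A)^{+Q})$. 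I do not expect any genuine obstacle here: the only points needing a word of care are the decomposition $d=d_{1}\cdots d_{k}$ (which uses only that disjointly supported colourings commute) and the subadditivity of $\mathrm{supp}$ under products, both of which are immediate.
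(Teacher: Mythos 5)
Your proof is correct and follows essentially the same route as the paper: the paper runs an induction on $|A|$, peeling off one point of the support at a time, which is exactly your telescoping decomposition $d=d_{1}\cdots d_{k}$ written recursively. The two places you flag for care (commuting disjointly supported colourings and subadditivity of $\mathrm{supp}$ under products) are indeed the only ingredients, and both hold.
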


\begin{proof}
Let $c\in N^{(G)}$. We prove the claim by induction on $|A|$. Suppose to start that $|A|=1$ and $c\in N^{(G)}$, and let $c'\in c\mathcal{L}(A)$. Then $c$ and $c'$ differ on at most one point $p$ of $G$, so our assumption implies that $\alpha(c)$ and $\alpha(c')$ can only differ on points from $B_{H}(\beta(p),Q)=\beta(\lbrace p\rbrace)^{+Q}=\beta(A)^{+Q}$. This means that $\alpha(c')\in \alpha(c)\mathcal{L}(\beta(A)^{+Q})$, so the claim holds for subsets reduced to a point. 

\smallskip

Suppose now that it holds for any colouring and for subsets of cardinality $k\ge 1$, and let $A\subset X$ be a set of cardinality $k+1$. Let $c\in N^{(G)}$ and let $c'\in c\mathcal{L}(A)$. Hence $c^{-1}c'\in\mathcal{L}(A)$, and we can write $c^{-1}c'=c''d$ for some colourings $c''\in \mathcal{L}(A\setminus\lbrace a\rbrace)$, $d\in\mathcal{L}(\lbrace a\rbrace)$, where $a\in A$ is an arbitrary point. Thus $c'=(cc'')d$, and it follows that 
\begin{equation*}
    \alpha(c')\in \alpha(cc'')\mathcal{L}(\beta(\lbrace a\rbrace)^{+Q}).
\end{equation*}
On the other hand, the inductive assumption applied to $A\setminus\lbrace a\rbrace$ shows that \mbox{$\alpha(cc'')\in\alpha(c)\mathcal{L}(\beta(A\setminus\lbrace a\rbrace)^{+Q})$}, whence 
\begin{equation*}
    \alpha(c')\in \alpha(cc'')\mathcal{L}(\beta(\lbrace a\rbrace)^{+Q}) \subset \alpha(c)\mathcal{L}(\beta(A\setminus\lbrace a\rbrace)^{+Q})\mathcal{L}(\beta(\lbrace a\rbrace)^{+Q})=\alpha(c)\mathcal{L}(\beta(A)^{+Q}).
\end{equation*}
Thus we conclude that $\alpha(c\mathcal{L}(A))\subset \alpha(c)\mathcal{L}(\beta(A)^{+Q})$, and the induction is complete. 
\end{proof}

{
\renewcommand{\proofname}{Proof of Proposition~\ref{prop:finiteunionofcosets}.}

\begin{proof} 
By Proposition~\ref{prop:characterisationofaptolicity}, we know that there exists $Q\ge 0$ such that the Hausdorff distance between $\beta(\text{supp}(c_{1}^{-1}c_{2}))$ and $\text{supp}(\alpha(c_{1})^{-1}\alpha(c_{2}))$ is at most $Q$, for any $c_{1},c_{2}\in N^{(G)}$. In particular, the assumption of Lemma~\ref{lm:finiteunionofcosets} is satisfied. 

\smallskip

Fix now any finite subset $A\subset G$ and a number $Q'\ge Q$. Let $c'\in \alpha^{-1}(\mathcal{L}(\beta(A)^{+Q'}))$, and let $d\in \mathcal{L}(A)$. Then, applying Lemma~\ref{lm:finiteunionofcosets}, one has 
\begin{equation*}
    \alpha(c'd) \in \alpha(c')\mathcal{L}(\beta(A)^{+Q}) \subset \alpha(c')\mathcal{L}(\beta(A)^{+Q'})
\end{equation*}
and since $\alpha(c')\in \mathcal{L}(\beta(A)^{+Q'})$, we deduce that $\alpha(c'd)\in \mathcal{L}(\beta(A)^{+Q'})$, i.e. 
\begin{equation*}
    c'd\in \alpha^{-1}(\mathcal{L}(\beta(A)^{+Q'})).
\end{equation*}
Thus $\alpha^{-1}(\mathcal{L}(\beta(A)^{+Q'}))$ is stable under multiplication by elements of $\mathcal{L}(A)$, which implies that it is a union of cosets of $\mathcal{L}(A)$.
\end{proof}}

Let us conclude this section with a sufficient condition for aptolic quasi-isometries to be scaling.

\begin{lemma}\label{lm:scalingfactorsforwreathproducts}
Let $N,M,G,H$ be finitely generated groups, and consider two maps $\alpha\colon N^{(G)}\longrightarrow M^{(H)}$ and $\beta\colon G\longrightarrow H$ such that the map $\varphi\colon N\wr G\longrightarrow M\wr H$, $\varphi(c,p)=(\alpha(c),\beta(p))$ is a quasi-isometry. If $\beta$ is quasi-$k$-to-one, then $\varphi$ is quasi-$k$-to-one. 
\end{lemma}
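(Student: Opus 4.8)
The plan is to compute $|\varphi^{-1}(A)|$ columnwise over $H$ and then reduce the quasi-$k$-to-one inequality for $\varphi$ to the one for $\beta$ by a layer-cake decomposition. First I would record that, since $\varphi$ is a quasi-isometry of aptolic form, Proposition~\ref{prop:characterisationofaptolicity} gives that $\alpha\colon N^{(G)}\to M^{(H)}$ is a bijection and $\beta\colon G\to H$ is a quasi-isometry (so its fibres are uniformly finite). Fix a finite subset $A\subset M\wr H$ and, for $h\in H$, set $A_{h}\defeq\{d\in M^{(H)}:(d,h)\in A\}$, so that $|A|=\sum_{h}|A_{h}|$ with all but finitely many $A_{h}$ empty. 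Because $\varphi(c,p)=(\alpha(c),\beta(p))$ and $\alpha$ is a bijection, $\varphi^{-1}(A)$ is the disjoint union over $p\in G$ of $\alpha^{-1}(A_{\beta(p)})\times\{p\}$, whence
\begin{equation*}
  |\varphi^{-1}(A)|=\sum_{p\in G}|A_{\beta(p)}|=\sum_{h\in H}|\beta^{-1}(h)|\cdot|A_{h}|.
\end{equation*}

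Next I would slice $A$ by the ``height'' of its columns. Let $m\defeq\max_{h}|A_{h}|$ and, for $1\le j\le m$, let $B_{j}\defeq\{h\in H:|A_{h}|\ge j\}$, a finite subset of $H$. Then $|A_{h}|=\#\{j:h\in B_{j}\}$, so the formula above rearranges to
\begin{equation*}
  k|A|-|\varphi^{-1}(A)|=\sum_{h\in H}\bigl(k-|\beta^{-1}(h)|\bigr)|A_{h}|=\sum_{j=1}^{m}\bigl(k|B_{j}|-|\beta^{-1}(B_{j})|\bigr).
\end{equation*}
Applying the hypothesis that $\beta$ is quasi-$k$-to-one, with some constant $C>0$, to each $B_{j}$ yields
\begin{equation*}
  \bigl|k|A|-|\varphi^{-1}(A)|\bigr|\ \le\ \sum_{j=1}^{m}\bigl|k|B_{j}|-|\beta^{-1}(B_{j})|\bigr|\ \le\ C\sum_{j=1}^{m}|\partial_{H}B_{j}|.
\end{equation*}

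It then remains to bound $\sum_{j=1}^{m}|\partial_{H}B_{j}|$ by $|\partial_{M\wr H}A|$, and this is the step I expect to be the crux. A direct count gives $\sum_{j=1}^{m}|\partial_{H}B_{j}|=\sum_{h\in H}\bigl(\max_{h'\sim h}|A_{h'}|-|A_{h}|\bigr)^{+}$, the inner maximum over $H$-neighbours $h'$ of $h$ and with $x^{+}=\max(x,0)$. To match each summand to boundary vertices of $A$ in the wreath product, for each $h$ I would pick a neighbour $h^{*}$ realising $\max_{h'\sim h}|A_{h'}|$: then every $d\in A_{h^{*}}\setminus A_{h}$ produces a vertex $(d,h)\in\partial_{M\wr H}A$, since $(d,h)\notin A$ while the adjacent vertex $(d,h^{*})$ lies in $A$; there are at least $\max_{h'\sim h}|A_{h'}|-|A_{h}|$ such $d$, and vertices attached to distinct $h$ are distinct because they have distinct $H$-coordinate. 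Hence $|\partial_{M\wr H}A|\ge\sum_{j=1}^{m}|\partial_{H}B_{j}|$, and combining with the previous estimate gives $\bigl|k|A|-|\varphi^{-1}(A)|\bigr|\le C\,|\partial_{M\wr H}A|$ for every finite $A\subset M\wr H$, i.e. $\varphi$ is quasi-$k$-to-one.

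The only genuinely nontrivial point is the geometric inequality $\sum_{j}|\partial_{H}B_{j}|\le|\partial_{M\wr H}A|$; everything else is bookkeeping with the columnwise formula. What makes that inequality clean — and avoids any multiplicative loss by the degree of $H$ — is that the boundary contributions coming from different columns sit in disjoint columns of $M\wr H$, so they may simply be summed.
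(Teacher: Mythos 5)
Your proof is correct. The paper itself does not spell out an argument for this lemma --- it simply defers to \cite[lemma~3.9]{GT24b} --- so your column-by-column computation combined with the layer-cake decomposition $B_{j}=\lbrace h\in H:|A_{h}|\ge j\rbrace$ constitutes a complete, self-contained proof. Every step checks out: the identity $|\varphi^{-1}(A)|=\sum_{h\in H}|\beta^{-1}(h)|\,|A_{h}|$ uses only that $\alpha$ is a bijection, which indeed follows from Proposition~\ref{prop:characterisationofaptolicity} once $\varphi$ is a quasi-isometry of aptolic form; the rearrangement $k|A|-|\varphi^{-1}(A)|=\sum_{j=1}^{m}\bigl(k|B_{j}|-|\beta^{-1}(B_{j})|\bigr)$ is exact; and the crux inequality $\sum_{j=1}^{m}|\partial_{H}B_{j}|\le|\partial_{M\wr H}A|$ is correctly justified, since $\sum_{j}|\partial_{H}B_{j}|=\sum_{h}\bigl(\max_{h'\sim h}|A_{h'}|-|A_{h}|\bigr)^{+}$ by the paper's definition of $\partial_{H}$, and for each $h$ the colourings $d\in A_{h^{*}}\setminus A_{h}$ produce at least $|A_{h^{*}}|-|A_{h}|$ distinct vertices $(d,h)\in\partial_{M\wr H}A$, these contributions living in pairwise disjoint columns. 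A pleasant feature of your argument is that the final constant is exactly the constant $C$ witnessing that $\beta$ is quasi-$k$-to-one, with no multiplicative loss depending on the degree of $H$ or on the lamp groups.
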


\begin{proof}
The proof of~\cite[Lemma~3.9]{GT24b} applies word for word. 
\end{proof}
\section{Proof of Theorem~\ref{thm:maintheorem}}\label{section4}

We can now combine all these intermediate observations to prove our main result. Fix then \mbox{$\varphi\colon N\wr G\longrightarrow M\wr H$} an arbitrary quasi-isometry, where $N$ and $M$ have polynomial growth of degrees $n$ and $m$ respectively. Our goal is to prove that $\varphi$ is quasi-$\frac{m}{n}$-to-one.

\smallskip

Let $C\ge 1$, $K\ge 0$ be such that $\varphi$ and a quasi-inverse $\overline{\varphi}\colon M\wr H\longrightarrow N\wr G$ are $(C,K)-$quasi-isometries, and such that  $\overline{\varphi}\circ\varphi$, $\varphi\circ\overline{\varphi}$ are within distance $K$ from $\text{Id}_{N\wr G}$, $\text{Id}_{M\wr H}$ respectively. By assumption, up to a bounded distance, $\varphi$ can be chosen aptolic, i.e. there are two maps $\alpha\colon N^{(G)}\longrightarrow M^{(H)}$, $\beta\colon G\longrightarrow H$ such that
\begin{equation*}
    \varphi(c,p)=(\alpha(c),\beta(p))
\end{equation*}
for any $(c,p)\in N\wr G$. It follows from Proposition~\ref{prop:characterisationofaptolicity} that $\alpha$ is a bijection, that $\beta$ is a $(C,K)-$quasi-isometry, and that $\overline{\varphi}$ takes the form
\begin{equation*}
    \overline{\varphi}(c,p)=(\alpha^{-1}(c), \overline{\beta}(p))
\end{equation*}
for any $(c,p)\in M\wr H$, where $\overline{\beta}\colon H\longrightarrow G$ is a quasi-inverse of $\beta$ (with same parameters $C$ and $K$). Lastly, by increasing $K$ if needed, we may assume that it is larger than the constant $Q\ge 0$ given by Proposition~\ref{prop:characterisationofaptolicity}\textit{(iii)}. 

\begin{claim}\label{claim4.1}
The equality $\left|A\right|=\frac{m}{n}\left|\beta(A)^{+K}\right|$ holds for any finite subset $A\subset G$. 
\end{claim}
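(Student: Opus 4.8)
The plan is to reduce the claim to the quasi-isometry invariance of the polynomial growth degree. Write $B:=\beta(A)^{+K}$. I would show that $\alpha$ identifies the subgroup $\mathcal{L}(A)\cong N^{|A|}$ with the subgroup $\mathcal{L}(B)\cong M^{|B|}$ up to a quasi-isometry. Since $N$ (resp. $M$) has polynomial growth of degree $n$ (resp. $m$), the group $N^{|A|}$ has growth degree $n|A|$ and $M^{|B|}$ has growth degree $m|B|$; as the growth degree of a group of polynomial growth is a quasi-isometry invariant (Gromov together with Bass--Guivarc'h), this forces $n|A|=m|B|$, which is exactly $|A|=\frac{m}{n}|B|$.

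I would first normalise $\alpha$ so that $\alpha(1_{N^{(G)}})=1_{M^{(H)}}$, by replacing $\varphi$ with the map $(c,p)\longmapsto(\alpha(1_{N^{(G)}})^{-1}\alpha(c),\beta(p))$: this is post-composition with a left translation of $M\wr H$, hence with an isometry, so it changes neither the quasi-isometry constants $C,K$ nor the supports $\text{supp}(\alpha(c_{1})^{-1}\alpha(c_{2}))$, hence neither $Q$ nor $K$. Under this normalisation, Lemma~\ref{lm:finiteunionofcosets} (with the constant $Q$ from Proposition~\ref{prop:characterisationofaptolicity}(iii), using $K\ge Q$) gives $\alpha(\mathcal{L}(A))\subseteq\mathcal{L}(B)$, and Proposition~\ref{prop:finiteunionofcosets} (applied with $Q'=K$) gives that $\alpha^{-1}(\mathcal{L}(B))$ is a union of cosets of $\mathcal{L}(A)$ containing $\mathcal{L}(A)$. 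Applying the second half of Proposition~\ref{prop:finiteunionofcosets} to $B$, using that $\overline{\beta}\circ\beta$ lies within $K$ of $\text{Id}_{G}$, together with the support estimate of Proposition~\ref{prop:characterisationofaptolicity}(iii), one confines $\alpha^{-1}(\mathcal{L}(B))$ inside a fixed finite thickening $\mathcal{L}(A^{+R})$; the key point is then to argue that only finitely many cosets of $\mathcal{L}(A)$ actually occur.

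Granting this, the quasi-isometry step is routine. For a fixed base arrow $p_{0}\in G$, the metric induced from $N\wr G$ on $\mathcal{L}(A)\times\{p_{0}\}$ agrees, up to an additive constant depending only on $A$, with $\sum_{a\in A}d_{N}(c(a),c'(a))$, hence with the word metric of $N^{|A|}$, and likewise on the image side for $\mathcal{L}(B)\times\{\beta(p_{0})\}$ in $M\wr H$. Since $\varphi$ is a $(C,K)$-quasi-isometry and the pointwise displacement of $\alpha$ under an elementary move of the colouring is uniformly bounded by Proposition~\ref{prop:characterisationofaptolicity}(iv)--(v), the restriction of $\alpha$ to each of the finitely many cosets making up $\alpha^{-1}(\mathcal{L}(B))$ is a quasi-isometric embedding of $N^{|A|}$ into $M^{|B|}$, with uniform constants. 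As these images partition $M^{|B|}$, comparing cardinalities of metric balls gives $m|B|\le n|A|$, while the embedding $\alpha\colon N^{|A|}\hookrightarrow M^{|B|}$ gives $n|A|\le m|B|$; hence $n|A|=m|B|$.

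The hard part is the finiteness of the number of cosets of $\mathcal{L}(A)$ inside $\alpha^{-1}(\mathcal{L}(B))$: the support estimates alone only place this set between $\mathcal{L}(A)$ and the bounded thickening $\mathcal{L}(A^{+R})$, which a priori contains infinitely many cosets of $\mathcal{L}(A)$ when $N$ is infinite. I expect this to be extracted by playing Proposition~\ref{prop:finiteunionofcosets} for $\varphi$ against its analogue for the quasi-inverse $\overline{\varphi}=(\alpha^{-1},\overline{\beta})$, exploiting that $\overline{\beta}\circ\beta$ and $\beta\circ\overline{\beta}$ are both $K$-close to the identity and that $K\ge Q$, so that the two bounded thickenings produced on the $G$- and $H$-sides collapse back onto $A$ and $B$ respectively. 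Once established, this equality feeds into the proof of Theorem~\ref{thm:maintheorem} through Lemma~\ref{lm:scalingfactorsforwreathproducts}.
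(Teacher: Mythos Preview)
Your approach and the paper's share the same engine---quasi-isometry invariance of the polynomial growth degree, fed through the bijection $\alpha$---but the paper organises the argument so as to sidestep precisely the ``hard part'' you flag, and your write-up does not close that gap.

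The paper never tries to show that $\alpha$ restricts to a quasi-isometry $\mathcal{L}(A)\to\mathcal{L}(B)$, nor that $\alpha^{-1}(\mathcal{L}(B))$ decomposes into \emph{finitely} many cosets of $\mathcal{L}(A)$. It argues instead by a two-sided squeeze. Setting $S=\mathcal{L}(\beta(A)^{+K})\times\beta(A)^{+K}\subset M\wr H$, it observes on one hand that $\varphi^{-1}(S^{+K})$ coarsely coincides with $\overline{\varphi}(S)$ (Lemma~\ref{lm:preimagesandquasiinverses}), so its growth degree equals that of $S$, namely $m\lvert\beta(A)^{+K}\rvert$; on the other hand $\varphi^{-1}(S^{+K})$ sits inside a product $\bigl(\bigsqcup_{i}d_{i}\mathcal{L}(A)\bigr)\times F$ with $F$ finite, which the paper assigns growth degree $n\lvert A\rvert$, yielding $m\lvert\beta(A)^{+K}\rvert\le n\lvert A\rvert$. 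Running the symmetric argument with $\overline{\varphi}$ gives $n\lvert\overline{\beta}(B)^{+K}\rvert\le m\lvert B\rvert$ for every finite $B\subset H$. Taking $B=\beta(A)^{+K}$ and using the elementary containment $A\subset\overline{\beta}(\beta(A)^{+K})^{+K}$ then pins all inequalities to equalities. The point is that two one-sided inequalities, each obtained by a containment of sets rather than by a bijection, are enough.

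Your route genuinely needs the finiteness. To extract $m\lvert B\rvert\le n\lvert A\rvert$ by ball-counting you must sum over the pieces $\alpha(d_{i}\mathcal{L}(A))$ partitioning $M^{\lvert B\rvert}$; if there are infinitely many pieces, $\sum_{i}r^{n\lvert A\rvert}$ says nothing about $r^{m\lvert B\rvert}$. The sketch you give only confines $\alpha^{-1}(\mathcal{L}(B))$ inside some $\mathcal{L}(A^{+R})$, and since $N$ is infinite the quotient $\mathcal{L}(A^{+R})/\mathcal{L}(A)\cong N^{\lvert A^{+R}\setminus A\rvert}$ is infinite; nothing in Proposition~\ref{prop:finiteunionofcosets} or the support estimates forces only finitely many of those cosets to appear. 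The ``collapse'' of the thickenings back onto $A$ and $B$ that you hope for is in fact a \emph{consequence} of Claim~\ref{claim4.1} (the squeeze shows $A=\overline{\beta}(\beta(A)^{+K})^{+K}$ a posteriori), not an independent input you can feed in. So as written there is a real gap, and the paper's squeeze is exactly the trick that avoids it.
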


{
\renewcommand{\proofname}{Proof of Claim~\ref{claim4.1}.}
\renewcommand{\qedsymbol}{$\blacksquare$}
\begin{proof}
Let $A\subset G$ be finite. Notice that, by Proposition~\ref{prop:finiteunionofcosets}, $\alpha^{-1}(\mathcal{L}(\beta(A)^{+2K}))$ is a union of cosets of $\mathcal{L}(A)$, say $\alpha^{-1}(\mathcal{L}(\beta(A)^{+2K}))=\bigsqcup_{i=1}^{k}(d_{i}\mathcal{L}(A))$ for some $k\ge 1$ and $d_{1},\dots, d_{k}\in N^{(G)}$. Consider then the subset 
\begin{equation*}
    S \defeq \left\lbrace (d,q) \in M\wr H : d\in \mathcal{L}(\beta(A)^{+K}), q\in \beta(A)^{+K}\right\rbrace. 
\end{equation*}
Notice that $S^{+K}\subset \left\lbrace (d,q)\in M\wr H : d\in \mathcal{L}(\beta(A)^{+2K}), q\in \beta(A)^{+2K}\right\rbrace$, so that 
\begin{align*}
    \varphi^{-1}(S^{+K}) &\subset \varphi^{-1}\left(\left\lbrace (d,q)\in M\wr H : d\in \mathcal{L}(\beta(A)^{+2K}), q\in \beta(A)^{+2K}\right\rbrace\right) \\
    &=\left\lbrace (c,p) \in N\wr G : c\in \alpha^{-1}(\mathcal{L}(\beta(A)^{+2K})), p\in \beta^{-1}(\beta(A)^{+2K})\right\rbrace \\
    &=\left\lbrace (c,p) \in N\wr G : c\in \bigsqcup_{i=1}^{k}d_{i}\mathcal{L}(A), p\in \beta^{-1}(\beta(A)^{+2K})\right\rbrace
\end{align*}
and the latter has growth degree $n|A|$. Hence $\varphi^{-1}(S^{+K})$ has growth degree at most $n|A|$. On the other hand, $\varphi^{-1}(S^{+K})$ coarsely coincides with $\overline{\varphi}(S)$ by Lemma~\ref{lm:preimagesandquasiinverses}, so their growth degrees coincide, and as moreover the growth degree is preserved by quasi-isometries, it follows that $\varphi^{-1}(S^{+K})$ has the same growth degree as $S$, which is $m\left|\beta(A)^{+K}\right|$. We thus get the inequality
\begin{equation}\label{eq4.1}
    m\left|\beta(A)^{+K}\right| \le n\left|A\right|
\end{equation}
for any finite subset $A\subset G$. The same reasoning with $\overline{\varphi}$ shows that 
\begin{equation}\label{eq4.2}
    n\left|\overline{\beta}(B)^{+K}\right| \le m\left|B\right|
\end{equation}
for any finite subset $B\subset H$. Thus, fixing any finite subset $A\subset G$ and applying (\ref{eq4.2}) with $B=\beta(A)^{+K}$ and then (\ref{eq4.1}), it follows that 
\begin{equation*}
    n\left|\overline{\beta}(\beta(A)^{+K})^{+K}\right| \le m\left|\beta(A)^{+K}\right| \le n\left|A\right|
\end{equation*}
Since $A\subset \overline{\beta}(\beta(A)^{+K})^{+K}$ (any $x\in A$ is within distance $K$ from $\overline{\beta}(\beta(x))$ with $\beta(x)\in\beta(A)^{+K}$), we get that 
\begin{equation*}
    n\left|A\right| \le n\left|\overline{\beta}(\beta(A)^{+K})^{+K}\right| \le m\left|\beta(A)^{+K}\right| \le n\left|A\right|
\end{equation*}
and finally $m\left|\beta(A)^{+K}\right|=n\left|A\right|$, i.e. $\left|A\right|=\frac{m}{n}\left|\beta(A)^{+K}\right|$ for any finite subset $A\subset G$. This proves Claim~\ref{claim4.1}.
\end{proof}
\renewcommand{\qedsymbol}{$\square$}
}

\begin{claim}\label{claim4.2}
The quasi-isometry $\beta\colon G\longrightarrow H$ is quasi-$\frac{m}{n}$-to-one.
\end{claim}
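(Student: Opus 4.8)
The plan is to derive Claim~\ref{claim4.2} from Claim~\ref{claim4.1} together with the counting lemmas of Section~\ref{section2}. Claim~\ref{claim4.1} gives the \emph{exact} identity $|A| = \frac{m}{n}|\beta(A)^{+K}|$ for every finite $A \subset G$; what the definition of quasi-$\frac{m}{n}$-to-one requires is that $\bigl|\frac{m}{n}|B| - |\beta^{-1}(B)|\bigr| \le C|\partial_H B|$ for every finite $B \subset H$, with a uniform constant. So the task is to translate the statement about $\beta(A)^{+K}$ into a statement about $\beta^{-1}(B)$, with the passage from one to the other absorbed into an error term controlled by $|\partial_H B|$.

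**First I would** fix a finite $B \subset H$ and set $A \defeq \beta^{-1}(B)$. Applying Claim~\ref{claim4.1} to this $A$ gives $|\beta^{-1}(B)| = \frac{m}{n}|\beta(\beta^{-1}(B))^{+K}|$, so it suffices to compare $|\beta(\beta^{-1}(B))^{+K}|$ with $|B|$ up to a multiple of $|\partial_H B|$. One inclusion is essentially free: since $\beta$ is a $(C,K)$-quasi-isometry with quasi-inverse $\overline{\beta}$, every point of $H$ is within $K$ of $\beta(G)$, hence $B \subset \beta(G)^{+K}$, and a short computation shows $B$ is contained in a bounded neighbourhood of $\beta(\beta^{-1}(B^{+t}))$ for a suitable constant $t$ depending only on $C,K$; conversely $\beta(\beta^{-1}(B)) \subset B$, so $\beta(\beta^{-1}(B))^{+K} \subset B^{+K}$. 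The upshot is that $\beta(\beta^{-1}(B))^{+K}$ and $B$ differ only within a bounded neighbourhood of $\partial_H B$: points of $B$ not hit essentially come from preimages that $\beta$ pushes near the boundary, and the extra points in $\beta(\beta^{-1}(B))^{+K}$ beyond $B$ lie in $B^{+K}\setminus B \subset (\partial_H B)^{+(K-1)}$. By Lemma~\ref{lm:cardinalityofneighborhood2} both discrepancies are bounded by $R|\partial_H B|$ for a constant $R$ depending only on $K$, $C$ and the degree bounds of $G$ and $H$.

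**Assembling this**, one gets $\bigl||\beta(\beta^{-1}(B))^{+K}| - |B|\bigr| \le R'|\partial_H B|$, whence
\begin{equation*}
    \Bigl|\tfrac{m}{n}|B| - |\beta^{-1}(B)|\Bigr| = \tfrac{m}{n}\bigl||B| - |\beta(\beta^{-1}(B))^{+K}|\bigr| \le \tfrac{m}{n} R' |\partial_H B|,
\end{equation*}
which is exactly the quasi-$\frac{m}{n}$-to-one inequality with constant $C \defeq \frac{m}{n}R'$. I would be slightly careful to also handle the trivial but necessary remark that $\beta^{-1}(B)$ is finite (it is, because $\beta$ is a quasi-isometry between locally finite graphs and $B$ is finite), so that Claim~\ref{claim4.1} legitimately applies.

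**The main obstacle** I anticipate is pinning down the boundary comparison cleanly: showing that the symmetric difference between $B$ and $\beta(\beta^{-1}(B))^{+K}$ sits inside a \emph{uniformly} bounded neighbourhood of $\partial_H B$. The subtle direction is that a point $b \in B$ could fail to be in $\beta(\beta^{-1}(B))$ only if all of its $\beta$-preimages get mapped by $\beta \circ (\text{quasi-inverse})$ slightly outside $B$, which forces $b$ to be within $(C+2)K$ or so of a point of $H \setminus B$, i.e. within a bounded neighbourhood of $\partial_H B$; making this quantitative uses only the quasi-isometry inequalities and that $\overline{\beta}\circ\beta$ is within $K$ of the identity. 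Once that is in place, Lemma~\ref{lm:cardinalityofneighborhood} (or~\ref{lm:cardinalityofneighborhood2}) converts the neighbourhood into the factor $|\partial_H B|$ and the rest is bookkeeping. An equally valid route, which I would mention as an alternative, is to invoke Lemma~\ref{lm:scalingfactorsforwreathproducts} in reverse via Proposition~\ref{prop:characterisationofaptolicity}, but the direct boundary estimate above is the most transparent.
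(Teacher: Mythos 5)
Your proposal is correct and follows essentially the same route as the paper: apply Claim~\ref{claim4.1} to $\beta^{-1}(B)$, then show that $B$ and $\beta(\beta^{-1}(B))^{+K}$ differ only inside a uniformly bounded neighbourhood of $\partial_{H}B$ (the key point being that a point of $B$ missed by $\beta(\beta^{-1}(B))^{+K}$ lies within $K$ of some $\beta(x)\notin B$, hence within $2K-1$ of $\partial_{H}B$), and convert via Lemmas~\ref{lm:cardinalityofneighborhood} and~\ref{lm:cardinalityofneighborhood2}. This is exactly the paper's argument.
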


{
\renewcommand{\proofname}{Proof of Claim~\ref{claim4.2}.}
\renewcommand{\qedsymbol}{$\blacksquare$}
\begin{proof}
Fix any finite subset $A\subset H$. Using Claim~\ref{claim4.1}, we estimate 
\begin{align*}
    \left|\frac{m}{n}|A|-|\beta^{-1}(A)|\right| &= \left|\frac{m}{n}|A|-\frac{m}{n}|\beta(\beta^{-1}(A))^{+K}|\right| \\
    &\le \left|\frac{m}{n}|A|-\frac{m}{n}|A^{+K}|\right|+\left|\frac{m}{n}|A^{+K}|-\frac{m}{n}|\beta(\beta^{-1}(A))^{+K}|\right| \\
    &\le \frac{m}{n}\left|A^{+K}\setminus A\right|+\frac{m}{n}\left|A^{+K}\setminus \beta(\beta^{-1}(A))^{+K}\right|.
\end{align*}
Now, we claim that $A^{+K}\setminus \beta(\beta^{-1}(A))^{+K} \subset (\partial_{H}A)^{+(2K-1)}$. Indeed, let $y\in A^{+K}\setminus \beta(\beta^{-1}(A))^{+K}$. We distinguish two cases. If $y\notin A$, then $y\in A^{+K}\setminus A \subset (\partial_{H}A)^{+(K-1)}\subset (\partial_{H}A)^{+(2K-1)}$ as wanted.

\smallskip

We may therefore assume that $y\in A$. We know that there is $x\in G$ with $d_{H}(y,\beta(x))\le K$. In particular, $\beta(x)\in A^{+K}$. On the other hand, since $y\notin  \beta(\beta^{-1}(A))^{+K}$, it follows that $x\notin \beta^{-1}(A)$, i.e. $\beta(x)\notin A$. Thus $\beta(x)\in A^{+K}\setminus A \subset (\partial_{H}A)^{+(K-1)}$. As $y$ is within distance $K$ from $\beta(x)$, it follows that $y$ is within distance $2K-1$ from $\partial_{H}A$, as wanted. 

\smallskip

In any case, we get $y\in (\partial_{H}A)^{+(2K-1)}$, which proves that $A^{+K}\setminus \beta(\beta^{-1}(A))^{+K} \subset (\partial_{H}A)^{+(2K-1)}$. 

\smallskip

Therefore, it follows from Lemma~\ref{lm:cardinalityofneighborhood} and Lemma~\ref{lm:cardinalityofneighborhood2} that there exist constants $L>0$, $R>0$ (depending only on $K$ and $H$) such that 
\begin{equation*}
    \left|A^{+K}\setminus A\right| \le R\left|\partial_{H}A\right|
\end{equation*}
and 
\begin{equation*}
    \left|A^{+K}\setminus \beta(\beta^{-1}(A))^{+K}\right| \le \left|(\partial_{H}A)^{+(2K-1)}\right| \le L\left|\partial_{H}A\right|.
\end{equation*}
Hence we finish our estimation above as
\begin{equation*}
    \left|\frac{m}{n}|A|-|\beta^{-1}(A)|\right| \le \frac{m}{n}(L+R)|\partial_{H}A|
\end{equation*}
which proves Claim~\ref{claim4.2}.
\end{proof}
\renewcommand{\qedsymbol}{$\square$}
}

Now, the fact that $\varphi$ is quasi-$\frac{m}{n}$-to-one directly follows from Claim~\ref{claim4.2} and Lemma~\ref{lm:scalingfactorsforwreathproducts}, which proves Theorem~\ref{thm:maintheorem}.
\section{Applications}\label{section5}

We can now focus on the consequences mentioned in the introduction.

{
\renewcommand{\proofname}{Proof of Theorem~\ref{thm:maintheoremfortheclassMexp}}
\begin{proof}
By Theorem~\ref{thm:finitedistancefromaptolicmaps}, any quasi-isometry $N\wr G\longrightarrow M\wr H$ is at a bounded distance from an aptolic quasi-isometry. Thus Theorem~\ref{thm:maintheorem} applies and give the conclusion.
\end{proof}}

{
\renewcommand{\proofname}{Proof of Corollary~\ref{cor:BLE}}
\begin{proof}
Assume that $M\wr G$ and $N\wr H$ are biLipschitz equivalent. Such a biLipschitz equivalence is quasi-one-to-one, and up to a finite distance change, it can be taken aptolic. This new quasi-isometry, that we denote 
\begin{align*}
    \varphi\colon N\wr G &\longrightarrow M\wr H \\
    (c,p)&\longmapsto (\alpha(c),\beta(p))
\end{align*}
is still quasi-one-to-one by Theorem~\ref{thm:stabilitypropertiesofscalingfactors}(\textit{i}), and in addition it must be quasi-$\frac{m}{n}$-to-one by Theorem~\ref{thm:maintheoremfortheclassMexp}. From Lemma~\ref{lm:uniquenessofscalingfactors}, it follows that $m=n$, and we also deduce that $\beta\colon G\longrightarrow H$ is quasi-one-to-one. From Theorem~\ref{thm:whytetheorem}, $\beta$ lies at finite distance from a bijection, which is the desired biLipschitz equivalence $G\longrightarrow H$. 
\end{proof}}

The proofs of Theorem~\ref{thm:classificationforvirtuallyabeliangroups} and Corollary~\ref{cor:classificationforvirtuallyabeliangroups2} require, in the flexibility part, to be able to construct quasi-isometries between wreath products in general situations. This is the goal of the next criterion. 

\begin{proposition}\label{prop:constructionsofQI}
Let $N,M,G,H$ be finitely generated groups. Let $n,m\ge 2$. If there exist a biLipschitz equivalence $N^{m}\longrightarrow M^{n}$ and a quasi-$\frac{m}{n}$-to-one quasi-isometry $G\longrightarrow H$, then there exists an aptolic quasi-isometry
\begin{equation*}
    N\wr G\longrightarrow M\wr H. 
\end{equation*}
\end{proposition}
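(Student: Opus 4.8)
The plan is to construct $\varphi$ in aptolic form directly, by building the two required maps $\alpha\colon N^{(G)}\longrightarrow M^{(H)}$ and $\beta\colon G\longrightarrow H$, and then verifying conditions \textit{(i)}--\textit{(v)} of Proposition~\ref{prop:characterisationofaptolicity}. For $\beta$ I would simply take the given quasi-$\frac{m}{n}$-to-one quasi-isometry $G\longrightarrow H$; this handles condition \textit{(ii)} immediately. The heart of the construction is $\alpha$, and here the natural tool is Theorem~\ref{thm:scalingfactorsandpartitions}: since $\beta$ is quasi-$\frac{m}{n}$-to-one, there are partitions $\mathcal{P}_{G}$ of $G$ into uniformly bounded pieces of size $m$ and $\mathcal{P}_{H}$ of $H$ into uniformly bounded pieces of size $n$, together with a bijection $\psi\colon\mathcal{P}_{G}\longrightarrow\mathcal{P}_{H}$, such that $\beta$ is at bounded distance from a map sending each $P\in\mathcal{P}_{G}$ into $\psi(P)$.

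Given this combinatorial data, I would define $\alpha$ piece by piece. For a piece $P\in\mathcal{P}_{G}$ with $|P|=m$, a colouring supported on $P$ is an element of $N^{P}\cong N^{m}$; the piece $\psi(P)$ has size $n$, and colourings supported on $\psi(P)$ form $M^{\psi(P)}\cong M^{n}$. Fix once and for all a biLipschitz equivalence $F\colon N^{m}\longrightarrow M^{n}$ (the hypothesis), normalised so that $F(1_{N^{m}})=1_{M^{n}}$; since $N^{m}$ and $M^{n}$ have bounded degree Cayley graphs, $F$ is $(C_{0},0)$-biLipschitz for some $C_{0}$, hence a bijection between the vertex sets at uniformly bounded ``displacement'' in graph terms. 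Transporting $F$ through the identifications $N^{P}\cong N^{m}$ and $M^{\psi(P)}\cong M^{n}$ gives a bijection $\alpha_{P}\colon \mathcal{L}(P)\longrightarrow \mathcal{L}(\psi(P))$, and since an arbitrary colouring $c\in N^{(G)}$ decomposes uniquely as a product $\prod_{P\in\mathcal{P}_{G}} c_{P}$ with $c_{P}\in\mathcal{L}(P)$ (the pieces being disjoint), I set $\alpha(c)\defeq\prod_{P\in\mathcal{P}_{G}}\alpha_{P}(c_{P})$, which is a well-defined bijection $N^{(G)}\longrightarrow M^{(H)}$ because only finitely many $c_{P}$ are non-trivial and the $\psi(P)$ partition $H$. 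This gives \textit{(i)}.

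The remaining conditions \textit{(iii)}, \textit{(iv)}, \textit{(v)} are then quantitative consequences of the uniform bounds: the pieces of $\mathcal{P}_{G}$ and $\mathcal{P}_{H}$ have uniformly bounded diameter, $\beta$ moves each $P$ within bounded distance of $\psi(P)$, and $F$ is $C_{0}$-biLipschitz. Concretely, if $c_{1},c_{2}$ differ only at $p\in G$, then $\alpha(c_{1}),\alpha(c_{2})$ differ only inside the single piece $\psi(P_{p})$, where $P_{p}$ is the piece containing $p$, and on that piece the change of each coordinate is bounded by $C_{0}$ times the diameter of $\mathcal{P}_{H}$'s pieces; this gives \textit{(iv)} with an explicit $L$, and \textit{(v)} follows symmetrically from $F^{-1}$ being $C_{0}$-biLipschitz. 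For \textit{(iii)}, $\mathrm{supp}(\alpha(c_{1})^{-1}\alpha(c_{2}))$ is contained in the union of the pieces $\psi(P)$ over those $P$ meeting $\mathrm{supp}(c_{1}^{-1}c_{2})$, each such $\psi(P)$ lying within bounded Hausdorff distance of $\beta(P)$, hence within bounded Hausdorff distance of $\beta(\mathrm{supp}(c_{1}^{-1}c_{2}))$; the reverse inclusion up to bounded error is identical, and here one uses that $\alpha$ preserves supports piece-by-piece so that a piece $P$ with $c_1=c_2$ on $P$ contributes nothing. Feeding \textit{(i)}--\textit{(v)} into Proposition~\ref{prop:characterisationofaptolicity} yields that $\varphi(c,p)=(\alpha(c),\beta(p))$ is an aptolic quasi-isometry, completing the proof.

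The main obstacle, and the step requiring the most care, is checking condition \textit{(iii)} in full: one must control the Hausdorff distance symmetrically, and the subtlety is that $\beta$ is only at \emph{bounded distance} from the piece-respecting map, so I would first replace $\beta$ by that map (legitimate, since aptolicity is a bounded-distance-stable notion once \textit{(ii)} holds) and only afterwards verify the support condition against the genuine $\beta$, absorbing the discrepancy into $Q$. A secondary technical point is the normalisation of $F$ and the choice of identifications $N^{P}\cong N^{m}$: one must make these uniformly over all pieces (using that there are only finitely many isomorphism types of pieces up to the bounded-diameter constraint, or simply fixing an enumeration of each piece), so that the constants $L,L',Q$ do not depend on $P$.
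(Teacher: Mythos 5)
Your proposal is correct and follows essentially the same route as the paper's proof: extract the partitions $\mathcal{P}_G$, $\mathcal{P}_H$ and the bijection $\psi$ from Theorem~\ref{thm:scalingfactorsandpartitions}, normalise the biLipschitz equivalence $N^m\to M^n$ so it fixes the identity, define $\alpha$ piecewise by transporting colourings on each $P$ to colourings on $\psi(P)$, and verify conditions \textit{(i)}--\textit{(v)} of Proposition~\ref{prop:characterisationofaptolicity} using the uniform bounds on piece sizes and the Lipschitz constant. The technical points you flag (uniformity of the identifications and replacing $\beta$ by the piece-respecting map up to bounded distance) are exactly the ones the paper handles implicitly.
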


\begin{proof}
Let $\sigma\colon N^{m}\longrightarrow M^{n}$ be a $C-$biLipschitz equivalence, and let $\beta\colon G\longrightarrow H$ be quasi-$\frac{m}{n}$-to-one. From Theorem~\ref{thm:scalingfactorsandpartitions}, there exist a partition $\mathcal{P}$ (resp. $\mathcal{Q}$) of $G$ (resp. of $H$) with uniformly bounded pieces of size $m$ (resp. of size $n$), and a bijection $\psi\colon \mathcal{P}\longrightarrow \mathcal{Q}$ such that $\beta(P)\subset \psi(P)$ for all $P\in \mathcal{P}$. Up to postcomposing $\sigma$ with a translation by an element of $M^{n}$, we can assume that $\sigma(1_{N^{m}})=1_{M^{n}}$. We can now define a bijection $\alpha\colon N^{(G)}\longrightarrow M^{(H)}$ in such a way that $\alpha$ sends $\mathcal{L}(P)$ into $\mathcal{L}(\psi(P))$ through $\sigma$ for any $P\in\mathcal{P}$, and $\alpha^{-1}$ sends $\mathcal{L}(Q)$ into $\mathcal{L}(\psi^{-1}(Q))$ through $\sigma^{-1}$ for any $Q\in\mathcal{Q}$.
Set 
\begin{align*}
    f\colon N\wr G &\longrightarrow M\wr H \\
    (c,p) &\longmapsto (\alpha(c), \beta(p)).
\end{align*}
We show that $f$ is a quasi-isometry by checking the five points of Proposition~\ref{prop:characterisationofaptolicity}. Points \textit{(i)} and \textit{(ii)} are satisfied by construction. For \textit{(iv)}, fix two colourings $c_{1},c_{2}\in N^{(G)}$ that differ on a single point $p\in G$ and such that $d_{N}(c_{1}(p), c_{2}(p))=1$. Let $P\in \mathcal{P}$ be the piece containing $p$. Let $u\in N^{m}$ (resp. $v\in N^{m}$) be the vector formed by colors of $c_{1}$ (resp. of $c_{2}$) on the piece $P$. Then $d_{N^{m}}(u,v)=1$, and since $\sigma$ is $C-$Lipschitz, it follows that 
\begin{equation}\label{eq5.2}
    d_{M^{n}}(\sigma(u),\sigma(v))\le C.
\end{equation}
By construction, the components of $\sigma(u)\in M^{n}$ (resp. of $\sigma(v)\in M^{n}$) are the colors of $\alpha(c_{1})$ (resp. of $\alpha(c_{2})$) on the piece $\psi(P)$. Thus, from (\ref{eq5.2}), it follows that 
\begin{equation*}
    d_{M}(\alpha(c_{1})(t), \alpha(c_{2})(t)) \le C
\end{equation*}
for any $t\in\psi(P)$. Additionally, if $t\in H\setminus \psi(P)$, then it belongs to another piece $Q\in \mathcal{Q}$, that we may write $Q=\psi(P')$ for some $P'\neq P$. By assumption, $c_{1}$ and $c_{2}$ agree on $P'$, so $\alpha(c_{1})$ and $\alpha(c_{2})$ agree on $\psi(P')$, in particular on $t\in \psi(P')$. Thus we have proved 
\begin{equation*}
    d_{M}(\alpha(c_{1})(t), \alpha(c_{2})(t)) \le C
\end{equation*}
for any $t\in H$, which is exactly \textit{(iv)} of Proposition~\ref{prop:characterisationofaptolicity}. Point \textit{(v)} is checked in a similar manner.

\smallskip

Finally, let us focus on \textit{(iii)}. Fix two colourings $c_{1},c_{2}\in N^{(G)}$, and let $P_{1},\dots, P_{k}\in\mathcal{P}$ be the pieces of $\mathcal{P}$ containing points of $\text{supp}(c_{1}^{-1}c_{2})$. Then $\psi(P_{1}),\dots,\psi(P_{k})$ are the pieces of $\mathcal{Q}$ containing points of $\beta(\text{supp}(c_{1}^{-1}c_{2}))$. Since those pieces are uniformly bounded, we deduce that there is a constant $D\ge 0$ such that
\begin{equation*}
    d_{\text{Haus}}\left(\text{supp}(\alpha(c_{1})^{-1}\alpha(c_{2})), \psi(P_{1})\cup\dots\cup\psi(P_{k})\right) \le D.
\end{equation*}
Additionally, since $\beta(\text{supp}(c_{1}^{-1}c_{2})) \subset \psi(P_{1})\cup\dots\cup\psi(P_{k})$ and has a point in each of these pieces, there is some $D'\ge 0$ such that 
\begin{equation*}
    d_{\text{Haus}}\left(\beta(\text{supp}(c_{1}^{-1}c_{2})), \psi(P_{1})\cup\dots\cup\psi(P_{k})\right) \le D'.
\end{equation*}
We conclude that 
\begin{equation*}
    d_{\text{Haus}}\left(\beta(\text{supp}(c_{1}^{-1}c_{2})), \text{supp}(\alpha(c_{1})^{-1}\alpha(c_{2}))\right) \le D+D'
\end{equation*}
where $D$ and $D'$ are independent of $c_{1},c_{2}$. This shows \textit{(iii)} of Proposition~\ref{prop:characterisationofaptolicity} and completes the proof that $f$ is a quasi-isometry.
\end{proof}

We already know from~\cite[Fact~6.35]{BGT24} that the converse statement is not true; namely a quasi-isometry between $N\wr G$ and $M\wr H$ does not necessarily provide a quasi-isometry between a power of $N$ and a power of $M$.

\begin{remark}
Proposition~\ref{prop:constructionsofQI} can be adapted to construct quasi-isometries between permutational wreath products with infinite lamp groups, thus providing amenable analogs of the flexibility already observed in~\cite[Remark~5.2]{Dum24}. For instance, combining the above method with the one of~\cite[Proposition~5.6]{Dum24} shows that $\Z^{n}\wr_{\Z^k}\Z^d$ and $\Z^{n'}\wr_{\Z^k}\Z^d$ are quasi-isometric for any $n,n'\ge 1$ and $d\ge k\ge 1$. 
\end{remark}

We will also require the next observation, which is a variant of~\cite[Lemma~1]{Dyu00}.

\begin{lemma}\label{lm:BLEbetweenwreathproducts}
Let $N,G,H$ be finitely generated groups. If $G$ and $H$ are biLipschitz equivalent, then $N\wr G$ and $N\wr H$ are biLipschitz equivalent. 
\end{lemma}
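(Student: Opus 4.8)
The plan is to build a biLipschitz equivalence $N\wr G \longrightarrow N\wr H$ directly out of a biLipschitz equivalence $\beta\colon G\longrightarrow H$, keeping the lamp coordinate essentially untouched. Since $\beta$ is a bijection (a $C$-biLipschitz equivalence between bounded degree graphs, being surjective up to bounded distance and coarsely injective with $K=0$, is at bounded distance from a genuine bijection by Whyte's theorem, and one may then use that bijection; alternatively one argues with the bijection furnished by the hypothesis), it induces a bijection $\beta_{*}\colon N^{(G)}\longrightarrow N^{(H)}$ by relabelling supports, namely $(\beta_{*}c)(\beta(p)) = c(p)$ for all $p\in G$. I would then set
\begin{equation*}
    f\colon N\wr G\longrightarrow N\wr H,\qquad f(c,p) = (\beta_{*}c,\beta(p)),
\end{equation*}
which is visibly a bijection, with inverse $(d,q)\mapsto ((\beta^{-1})_{*}d,\beta^{-1}(q))$, and then verify that $f$ is biLipschitz. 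Rather than invoking Proposition~\ref{prop:characterisationofaptolicity} (which would only give a quasi-isometry), I would check the biLipschitz bounds by hand using Lemma~\ref{lm:lipschitzmap} on both $f$ and $f^{-1}$.

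The computation splits along the two types of edges of $\mathrm{Cay}(N\wr G,U)$. If $(c,p)$ and $(c,q)$ are adjacent with $p,q$ adjacent in $G$, then $f(c,p)$ and $f(c,q)$ differ only in the arrow position, which moves from $\beta(p)$ to $\beta(q)$, at distance $d_{H}(\beta(p),\beta(q))\le C$ since $\beta$ is $C$-biLipschitz; so $d_{N\wr H}(f(c,p),f(c,q))\le C$. If instead $(c,p)$ and $(c',p)$ are adjacent with $c,c'$ differing only at $p$ with $c(p),c'(p)$ adjacent in $N$, then $\beta_{*}c$ and $\beta_{*}c'$ differ only at $\beta(p)$, again with adjacent values in $N$, so $f(c,p)$ and $f(c',p)$ are adjacent in $N\wr H$ and in particular at distance $1$. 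Hence $f$ sends adjacent vertices to vertices at distance $\le C$, so by Lemma~\ref{lm:lipschitzmap} $f$ is $C$-Lipschitz. The identical argument applied to $\beta^{-1}$ (which is $C$-biLipschitz as well) shows $f^{-1}$ is $C$-Lipschitz, so for all $a,b\in N\wr G$ we get $d_{N\wr H}(f(a),f(b))\ge \tfrac{1}{C}\,d_{N\wr G}(a,b)$, and combining the two bounds shows $f$ is a $C$-biLipschitz equivalence.

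The only genuinely delicate point is the passage from "biLipschitz equivalence" to "honest bijection" for $\beta$: a $C$-biLipschitz equivalence in the sense of Section~\ref{subsection2.2} need not be injective or surjective on the nose. This is handled by Theorem~\ref{thm:whytetheorem} — a biLipschitz equivalence is in particular quasi-one-to-one, hence lies at finite distance from a bijection $\beta'$; and since $\beta$ and $\beta'$ are at bounded distance in a bounded degree graph, $\beta'$ is itself biLipschitz (with slightly worse constant). Replacing $\beta$ by $\beta'$ from the outset, the construction above goes through verbatim. Everything else is the routine edge-by-edge check, with the key structural observation being that $\beta_{*}$ is a simple support-relabelling that commutes with the lamplighter moves.
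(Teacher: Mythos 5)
Your construction is essentially the paper's: it also defines $\varphi(c,p)=(c\circ f^{-1},f(p))$ (your $\beta_{*}$ is exactly the relabelling $c\mapsto c\circ f^{-1}$) and runs the identical edge-by-edge check via Lemma~\ref{lm:lipschitzmap} on the map and on its inverse. The only divergence is your detour through Theorem~\ref{thm:whytetheorem} to replace $\beta$ by a bijection, which is harmless but unnecessary: with the paper's definition a biLipschitz equivalence is a $(C,0)$-quasi-isometry, and on graphs the conditions $d_{Y}(f(x),f(y))\ge \frac{1}{C}d_{X}(x,y)$ and $d_{Y}(y,f(X))\le 0$ already force injectivity and surjectivity.
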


\begin{proof}
Fix a $C-$biLipschitz equivalence $f\colon G\longrightarrow H$ with $C\ge 1$, and consider the map
\begin{align*}
    \varphi\colon N\wr G&\longrightarrow N\wr H \\
    (c,p)&\longmapsto (c\circ f^{-1}, f(p)).
\end{align*}
Let $a=(c,p)$ and $b=(d,q)$ be adjacent vertices in $N\wr G$. This means that:
\begin{itemize}
    \item either $c=d$ and $p$ and $q$ are adjacent in $G$, in which case we get
    \begin{equation*}
        d_{N\wr H}(\varphi(a), \varphi(b))=d_{N\wr H}((c\circ f^{-1}, f(p)), (c\circ f^{-1}, f(q)))=d_{H}(f(p), f(q)) \le C
    \end{equation*}
    since $f$ is $C-$Lipschitz.
    \item or $p=q$ and $c$ and $d$ only differ on $p$, with $c(p)$ and $d(p)$ being adjacent in $N$. In this case, $c\circ f^{-1}$ and $d\circ f^{-1}$ only differ on $f(p)\in H$, and take adjacent colors in $N$ on that vertex. Therefore in this case as well
    \begin{equation*}
        d_{N\wr H}(\varphi(a), \varphi(b))=d_{N\wr H}((c\circ f^{-1}, f(p)), (d\circ f^{-1}, f(p)))=1\le C.
    \end{equation*}
\end{itemize}
It thus follows from Lemma~\ref{lm:lipschitzmap} that $f$ is $C-$Lipschitz. Now, as $f$ is bijective, so is $\varphi$, and its inverse is given by 
\begin{align*}
    \psi\colon N\wr H&\longrightarrow N\wr G \\
    (c,p)&\longmapsto (c\circ f, f^{-1}(p)).
\end{align*}
A computation as the one above, using that $f^{-1}$ is $C-$Lipschitz, proves that $\psi$ is $C-$Lipschitz as well. Hence, we conclude that 
\begin{equation*}
    \frac{1}{C}\cdot d_{N\wr G}(a,b)=\frac{1}{C}\cdot d_{N\wr G}(\psi(\varphi(a)),\psi(\varphi(b))\le d_{N\wr H}(\varphi(a), \varphi(b)) \le C\cdot d_{N\wr G}(a,b)
\end{equation*}
for any $a,b\in N\wr G$, and the proof is complete.
\end{proof}

We can now deduce Theorem~\ref{thm:classificationforvirtuallyabeliangroups} from these observations and our main theorem. 

{
\renewcommand{\proofname}{Proof of Theorem~\ref{thm:classificationforvirtuallyabeliangroups}}
\begin{proof}
We start proving \textit{(i)}. First of all, if there is a quasi-isometry $\varphi\colon A_{1}\wr G\longrightarrow A_{2}\wr H$, then up to finite distance, $\varphi$ can be chosen aptolic, so we write $\varphi(c,p)=(\alpha(c),\beta(p))$ for any $(c,p)\in A_{1}\wr G$. From (the proof of) Theorem~\ref{thm:maintheorem}, we know then that $\beta\colon G\longrightarrow H$ is quasi-$\frac{d_{2}}{d_{1}}$-to-one. 

\smallskip

Conversely, assume that there exists a quasi-$\frac{d_{2}}{d_{1}}$-to-one quasi-isometry $G\longrightarrow H$. Since there exists a biLipschitz equivalence $(\Z^{d_{1}})^{d_{2}}\longrightarrow (\Z^{d_{2}})^{d_{1}}$, Proposition~\ref{prop:constructionsofQI} shows that there is a quasi-isometry 
\begin{equation*}
    \varphi\colon\Z^{d_{1}}\wr G\longrightarrow \Z^{d_{2}}\wr H.
\end{equation*}
We therefore have our conclusion once we have proved the following: 
\begin{claim}\label{claim5.4}
An infinite virtually abelian group of growth degree $d\ge 1$ is biLipschitz equivalent to $\Z^d$. 
\end{claim}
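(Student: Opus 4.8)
The plan is to establish Claim~\ref{claim5.4} by reducing an arbitrary infinite virtually abelian group $A$ of growth degree $d$ to the model group $\Z^{d}$, using the fact that biLipschitz equivalence is weaker than isomorphism and is preserved by passing to finite-index subgroups and finite extensions. First I would recall that, since $A$ is virtually abelian, it contains a finite-index subgroup $A_{0}$ isomorphic to $\Z^{d}\times F$ for some finite abelian group $F$ (one takes a finite-index free abelian subgroup and, if necessary, intersects with the kernel of a suitable action to make it normal; its rank is $d$ because the growth degree of a finitely generated virtually abelian group equals the rank of any finite-index abelian subgroup). Then $A_{0}$ and $A$ are biLipschitz equivalent: indeed, a finite-index subgroup of a finitely generated group is always biLipschitz equivalent to the ambient group, since the inclusion is a quasi-isometry which is quasi-$\frac{1}{[A:A_{0}]}$-to-one, hence finite-to-one, but more directly one can exhibit an explicit bijection at bounded distance by choosing coset representatives. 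So it suffices to show $\Z^{d}\times F$ is biLipschitz equivalent to $\Z^{d}$.

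For the remaining step, I would observe that $\Z^{d}\times F$ is biLipschitz equivalent to $\Z^{d}$ via the "slicing" trick: choosing a finite subset $E\subset\Z^{d}$ with $|E|=|F|$ (say a box), one partitions $\Z^{d}$ into translates of $E$, each of which carries exactly $|F|$ points, and one sets up a bijection $\Z^{d}\times F\to\Z^{d}$ sending $\{x\}\times F$ onto the block $x+E$ for a fixed lattice of base points $x$. Since $F$ has bounded diameter in $\Z^{d}\times F$ and each block $x+E$ has bounded diameter in $\Z^{d}$, and since the induced map on the $\Z^{d}$-coordinate is the identity up to a bounded translation, this bijection moves adjacent vertices a bounded distance in either direction; by Lemma~\ref{lm:lipschitzmap} applied to the map and its inverse, it is a biLipschitz equivalence. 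Alternatively, and perhaps more cleanly, one can invoke Theorem~\ref{thm:scalingfactorsandpartitions} with $m=n=|F|$: the projection $\Z^{d}\times F\to\Z^{d}$ is a quasi-$|F|$-to-one quasi-isometry onto a quasi-$1$-to-one rescaling, but it is simplest to just check directly that the natural projection $\Z^{d}\times F\to\Z^{d}$ is a quasi-isometry, note it is quasi-$1$-to-one up to the bounded fibres $F$ — actually it is quasi-$|F|$-to-one — so Whyte's theorem does not immediately apply, and one really does want the explicit partition argument or an appeal to the general fact that any two bounded-degree graphs with bounded fibres over a common quasi-isometry model are biLipschitz equivalent when amenable and the scaling factors match.

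I expect the main obstacle to be the honest verification that $\Z^{d}\times F$ and $\Z^{d}$ are biLipschitz equivalent — that is, producing the bijection at bounded distance rather than merely a quasi-isometry, since a priori they differ by a bounded-index "thickening." The cleanest route is the explicit block decomposition: fix a fundamental domain $E$ for a finite-index subgroup $L\le\Z^{d}$ with $[\Z^{d}:L]=|F|$, index $F=\{f_{1},\dots,f_{|F|}\}$, pick a bijection $E\to F$ as an auxiliary device, and define $\Phi(x,f_{i})$ to be the unique point of the block $(\text{base of }x)+E$ corresponding to $f_{i}$; one then checks $\Phi$ and $\Phi^{-1}$ displace neighbours by at most $\operatorname{diam}(E)+O(1)$, so Lemma~\ref{lm:lipschitzmap} finishes it. Combining this with the finite-index reduction and Lemma~\ref{lm:BLEbetweenwreathproducts} is not needed for the claim itself but will be used to conclude Theorem~\ref{thm:classificationforvirtuallyabeliangroups} afterwards; for Claim~\ref{claim5.4} alone, the two reductions (finite-index subgroup, then killing the finite factor) are the whole argument.
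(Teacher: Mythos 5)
Your overall strategy (reduce to a finite\mbox{-}index copy of $\Z^{d}\times F$, then kill the finite factor by a block decomposition) can be made to work, but as written the first reduction rests on a false general principle, and this is a genuine gap. It is \emph{not} true that a finite-index subgroup of a finitely generated group is always biLipschitz equivalent to the ambient group, and the justification you offer is inconsistent with the paper's own formalism: you correctly observe that the inclusion $A_{0}\hookrightarrow A$ is quasi-$\frac{1}{[A:A_{0}]}$-to-one, but by Lemma~\ref{lm:uniquenessofscalingfactors} together with Theorem~\ref{thm:whytetheorem}, a quasi-isometry between amenable bounded degree graphs that is quasi-$k$-to-one for some $k\neq 1$ can never lie at bounded distance from a bijection; choosing coset representatives produces a bijection $A_{0}\times\lbrace 1,\dots,k\rbrace\longrightarrow A$, not $A_{0}\longrightarrow A$. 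The general principle genuinely fails: the paper itself notes that two biLipschitz equivalent finite-index subgroups of $N\wr G$ must have the same index, so a proper finite-index subgroup of, say, $\Z\wr\mathrm{BS}(1,2)$ (e.g.\ the kernel of the total-colour map to $\Z_{2}$) is \emph{not} biLipschitz equivalent to the whole group, precisely because $\mathrm{Sc}(\Z\wr\mathrm{BS}(1,2))=\lbrace 1\rbrace$. The same confusion resurfaces in your description of the block map, where you say the induced map on the $\Z^{d}$-coordinate is ``the identity up to a bounded translation'': a bijection $\Z^{d}\times F\longrightarrow\Z^{d}$ cannot project to the identity up to bounded error, since that would put it at bounded distance from the projection, which is quasi-$|F|$-to-one. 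The correct block map must rescale, sending $\Z^{d}$ isomorphically onto a sublattice $L\le\Z^{d}$ of index $|F|$ and then filling each translate of a fundamental domain $E$ with the $|F|$ lamps.

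The gap is repairable, in two ways. Either carry out the rescaling honestly at both steps: coset representatives give a biLipschitz bijection $\Z^{d}\times\lbrace 1,\dots,k\rbrace\longrightarrow A$ (right translation distorts the left-invariant word metric only by a bounded additive amount, and $\Z^{d}$ is undistorted in $A$), and then a single slicing argument $\Z^{d}\times\lbrace 1,\dots,k\rbrace\longrightarrow L+E=\Z^{d}$ with $[\Z^{d}:L]=k$ finishes the proof; so the finite-index step and the finite-factor step are really one and the same argument and should both be done by slicing. Alternatively, follow the paper's route, which avoids explicit constructions entirely: the quasi-inverse of the inclusion $\Z^{d}\hookrightarrow A$ is quasi-$[A:\Z^{d}]$-to-one, the computation $\mathrm{Sc}(\Z^{d})=\R_{>0}$ supplies a quasi-$\frac{1}{[A:\Z^{d}]}$-to-one self-quasi-isometry of $\Z^{d}$, and by Theorem~\ref{thm:stabilitypropertiesofscalingfactors} the composite is quasi-one-to-one, hence at bounded distance from a bijection by Theorem~\ref{thm:whytetheorem}. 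Your slicing construction, once corrected, is essentially a hands-on proof of the special case of $\mathrm{Sc}(\Z^{d})=\R_{>0}$ that this argument needs.
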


{
\renewcommand{\proofname}{Proof of Claim~\ref{claim5.4}.}
\renewcommand{\qedsymbol}{$\blacksquare$}
\begin{proof}
If $A$ is virtually abelian and has growth degree $d$, it contains $\Z^d$ as a finite-index subgroup, and thus there is a quasi-$[A:\Z^d]$-to-one quasi-isometry $f\colon A\longrightarrow \Z^d$ (any quasi-inverse of the inclusion $\Z^d \hookrightarrow A$). Since $\text{Sc}(\Z^d)=\R_{>0}$, we may fix an arbitrary quasi-$\frac{1}{[A:\Z^d]}$-to-one quasi-isometry $g\colon \Z^d\longrightarrow \Z^d$, and the composition $g\circ f\colon A\longrightarrow \Z^d$ is therefore quasi-one-to-one according to Theorem~\ref{thm:stabilitypropertiesofscalingfactors}\textit{(ii)}. Thus, from Theorem~\ref{thm:whytetheorem}, we deduce that it lies at finite distance from a bijection, which is the desired biLipschitz equivalence $A\longrightarrow \Z^d$.
\end{proof}
\renewcommand{\qedsymbol}{$\square$}
}

Now, Claim~\ref{claim5.4} shows that $A_{1}$ is biLipschitz equivalent to $\Z^{d_{1}}$, so~\cite[Lemma~1]{Dyu00} implies that there is a biLipschitz equivalence $f_{1}\colon A_{1}\wr G\longrightarrow \Z^{d_{1}}\wr G$. Likewise, there is a biLipschitz equivalence $f_{2}\colon \Z^{d_{2}}\wr H\longrightarrow A_{2}\wr H$. The composition 
\begin{equation*}
    f_{2}\circ\varphi\circ f_{1}\colon A_{1}\wr G\longrightarrow A_{2}\wr H
\end{equation*}
is the quasi-isometry we are looking for. 

\smallskip

Let us now focus on \textit{(ii)}. The left-to-right direction is a particular case of Corollary~\ref{cor:BLE}. Conversely, if $d_{1}=d_{2}$, it follows from Claim~\ref{claim5.4} that $A_{1}$ and $A_{2}$ are biLipschitz equivalent, so~\cite[Lemma~1]{Dyu00} implies that $A_{1}\wr G$ and $A_{2}\wr G$ are biLipschitz equivalent. Additionally, Lemma~\ref{lm:BLEbetweenwreathproducts} ensures that $A_{2}\wr G$ and $A_{2}\wr H$ are biLipschitz equivalent, and composing these two biLipschitz equivalences provides a biLipschitz equivalence from $A_{1}\wr G$ to $A_{2}\wr H$.
\end{proof}}

{
\renewcommand{\proofname}{Proof of Corollary~\ref{cor:classificationforvirtuallyabeliangroups2}}
\begin{proof}
Assume that $A_{1}\wr B_{1}$ is quasi-isometric to $A_{2}\wr B_{2}$. In particular, F\o lner functions of these two groups must coincide up to $\simeq$, which, by~\cite[Theorem~1]{Ers03}, implies that 
\begin{equation*}
    \left(n^{\text{deg}(A_{1})}\right)^{n^{\text{deg}(B_{1})}} \simeq  \left(n^{\text{deg}(A_{2})}\right)^{n^{\text{deg}(B_{2})}}.
\end{equation*}
It follows that $n^{\text{deg}(B_{1})}\ln(n) \simeq n^{\text{deg}(B_{2})}\ln(n)$, and thus $n^{\text{deg}(B_{1})} \simeq n^{\text{deg}(B_{2})}$. The latter implies $\text{deg}(B_{1})=\text{deg}(B_{2})$ as claimed. 

\smallskip

Conversely, if $B_{1}$ and $B_{2}$ both have growth degree $d$, they are both biLipschitz equivalent to $\Z^{d}$ by Claim~\ref{claim5.4}, 
and since $\text{Sc}(\Z^d)=\R_{>0}$, we may fix an arbitrary quasi-$\frac{\text{deg}(A_{2})}{\text{deg}(A_{1})}$-to-one quasi-isometry $\Z^d\longrightarrow\Z^d$. The latter then provides us a quasi-$\frac{\text{deg}(A_{2})}{\text{deg}(A_{1})}$-to-one quasi-isometry $B_{1}\longrightarrow B_{2}$. Additionally, as there is a biLipschitz equivalence $A_{1}^{\text{deg}(A_{2})}\longrightarrow A_{2}^{\text{deg}(A_{1})}$, we conclude with Proposition~\ref{prop:constructionsofQI} that there is a quasi-isometry
\begin{equation*}
    A_{1}\wr B_{1} \longrightarrow A_{2}\wr B_{2}
\end{equation*}
as desired. The proof is complete. 
\end{proof}}

Concerning iterated wreath products, we prove first Proposition~\ref{prop:mixingofscalingconditions}, and Proposition~\ref{prop:lamplighterrigidity} follows immediately.

{
\renewcommand{\proofname}{Proof of Proposition~\ref{prop:mixingofscalingconditions}}
\begin{proof}
Assume that there exists a quasi-isometry 
\begin{equation*}
    \Z_{n}\wr(N_{1}\wr G)\longrightarrow \Z_{m}\wr(N_{2}\wr H).
\end{equation*}
From~\cite[Proposition~1.3]{GT24a}, $N_{1}\wr G$ and $N_{2}\wr H$ have the thick bigon property, and they are both amenable, so we may apply~\cite[Theorem~8.6]{GT24a} to deduce that there exist $a, r, s\ge 1$ such that $n=a^{r}$, $m=a^{s}$ and a quasi-$\frac{s}{r}$-to-one quasi-isometry $N_{1}\wr G\longrightarrow N_{2}\wr H$. By Theorem ~\ref{thm:maintheorem}, the latter is also quasi-$\frac{n_{2}}{n_{1}}$-to-one, and thus Lemma~\ref{lm:uniquenessofscalingfactors} forces $\frac{s}{r}=\frac{n_{2}}{n_{1}}$.
\end{proof}
}

We conclude with the proof of Corollary~\ref{cor:classificationofiteratedwreathproducts}.

{
\renewcommand{\proofname}{Proof of Corollary~\ref{cor:classificationofiteratedwreathproducts}}
\begin{proof}\textit{(i)} The left-to-right direction is a particular case of Proposition~\ref{prop:mixingofscalingconditions}. Conversely, given a quasi-$\frac{d_{2}}{d_{1}}$-to-one quasi-isometry $G\longrightarrow H$, we apply Proposition~\ref{prop:constructionsofQI} to construct an aptolic quasi-isometry
\begin{equation*}
    A_{1}\wr G\longrightarrow A_{2}\wr H.
\end{equation*}
From the proof of Proposition~\ref{prop:constructionsofQI} and Lemma~\ref{lm:scalingfactorsforwreathproducts}, this quasi-isometry is quasi-$\frac{d_{2}}{d_{1}}$-to-one, thus also quasi-$\frac{s}{r}$-to-one by assumption. As $n=a^{r}$ and $m=a^{s}$, we apply~\cite[Theorem~3.11]{GT24b} to deduce that there is a quasi-isometry
\begin{equation*}
    \Z_{n}\wr(A_{1}\wr G) \longrightarrow \Z_{m}\wr(A_{2}\wr H)
\end{equation*}
as claimed. 

\smallskip

Lastly, \textit{(ii)} follows from a combination of \textit{(i)} and of Lemma~\ref{lm:uniquenessofscalingfactors}.
\end{proof}}

\section{Comments and questions}\label{section6}

Results from~\cite{BGT24} still apply when lamp groups have superpolynomial but subexponential growth. For such groups, the growth function can exhibit several behaviors, and our understanding is much more limited, but from the proof of Theorem~\ref{thm:maintheorem} we can deduce the following:
\begin{corollary}\label{cor:intermediategrowth}
Let $\Gamma$ and $\Lambda$ be finitely generated groups with subexponential and superpolynomial growths. Let $G$ and $H$ be finitely presented groups from $\mathcal{M}_{\text{exp}}$. 

\begin{enumerate}[label=(\roman*)]
    \item If $\gamma_{\Gamma}(n)\simeq e^{n^{\alpha}}$, $\gamma_{\Lambda}(n)\simeq e^{n^{\beta}}$ for some $\alpha,\beta\in (0,1)$, and if \;$\Gamma\wr G$ and $\Lambda\wr H$ are quasi-isometric, then $\alpha=\beta$.
    \item If $\gamma_{\Gamma}(n)\simeq e^{\ln(n)n^{\alpha}}$, $\gamma_{\Lambda}(n)\simeq e^{\ln(n)n^{\beta}}$ for some $\alpha,\beta\in (0,1)$, and if \;$\Gamma\wr G$ and $\Lambda\wr H$ are quasi-isometric, then $\alpha=\beta$.
    \item If $\gamma_{\Gamma}(n)\simeq e^{\frac{n}{\ln^{\circ k}(n)}}$, $\gamma_{\Lambda}(n)\simeq e^{\frac{n}{\ln^{\circ d}(n)}}$ for some integers $d,k\ge 1$, and if \;$\Gamma\wr G$ and $\Lambda\wr H$ are quasi-isometric, then $k=d$.
\end{enumerate}
\end{corollary}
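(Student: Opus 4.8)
The plan is to re-run the proof of Theorem~\ref{thm:maintheorem} up to the construction of the coarse box $S$, but to replace the comparison of polynomial growth \emph{degrees} with a comparison of full volume growth \emph{functions}, which remain quasi-isometry invariants up to $\simeq$ in this generality. So let $\varphi\colon\Gamma\wr G\longrightarrow\Lambda\wr H$ be a quasi-isometry. By Theorem~\ref{thm:finitedistancefromaptolicmaps} I may assume $\varphi$ is aptolic, $\varphi(c,p)=(\alpha(c),\beta(p))$, with $\beta$ a $(C,K)$-quasi-isometry, quasi-inverse $\overline\beta$, $\overline\varphi(c,p)=(\alpha^{-1}(c),\overline\beta(p))$, and, after enlarging $K$, $K\ge Q$ for the constant of Proposition~\ref{prop:characterisationofaptolicity}\textit{(iii)}. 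Fix $A=\{1_G\}$ and form the finitely-supported slab $S\defeq\{(d,q)\in\Lambda\wr H:d\in\mathcal{L}(\beta(A)^{+K}),\ q\in\beta(A)^{+K}\}$ as in the proof of Claim~\ref{claim4.1}. Then Lemma~\ref{lm:preimagesandquasiinverses} gives that $\varphi^{-1}(S^{+K})$ lies within finite Hausdorff distance of $\overline\varphi(S)$, so it is quasi-isometric to $S$; and, running the computation of Claim~\ref{claim4.1} with Proposition~\ref{prop:finiteunionofcosets}, $\varphi^{-1}(S^{+K})$ is contained in a finite union $\bigsqcup_{i=1}^{k}d_i\mathcal{L}(A)\times\beta^{-1}(\beta(A)^{+2K})$ of cosets of $\mathcal{L}(A)\cong\Gamma$, crossed with the \emph{finite} set $\beta^{-1}(\beta(A)^{+2K})$.

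Next I pass to volume growth functions; write $f\preceq g$ if $f(n)\le Cg(Cn)$ for some $C>0$ and all $n$, so that $f\simeq g$ iff $f\preceq g\preceq f$. For the \emph{fixed} finite set $F\defeq\beta(A)^{+K}$ and any $q$, the metric subspace $\mathcal{L}(F)\times\{q\}$ of $\Lambda\wr H$ is $(1,C_F)$-quasi-isometric to $\Lambda^{|F|}$ with its $\ell^{1}$-product metric, the amount of arrow travel needed to pass between two colourings supported in $F$ being bounded in terms of $F$ alone. Hence $S$, a finite union of translates of such subspaces, satisfies $\gamma_S\simeq\gamma_{\Lambda^{|F|}}$; likewise the coset union above has volume growth $\simeq\gamma_{\Gamma^{|A|}}=\gamma_{\Gamma}$ since $|A|=1$. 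Volume growth being monotone under restriction to a subspace and invariant under quasi-isometries and finite Hausdorff perturbations, all up to $\simeq$, we get
\[
\gamma_{\Lambda^{|F|}}\ \simeq\ \gamma_S\ \simeq\ \gamma_{\varphi^{-1}(S^{+K})}\ \preceq\ \gamma_{\Gamma},
\]
and, symmetrically with $\overline\varphi$, $\gamma_{\Gamma^{\ell}}\preceq\gamma_{\Lambda}$ for some fixed $\ell\ge1$. Now I invoke the elementary stability lemma: if $P$ is finitely generated with $\gamma_P$ equivalent to one of $e^{n^{\alpha}}$, $e^{(\ln n)n^{\alpha}}$, or $e^{n/\ln^{\circ j}n}$, then $\gamma_{P^{s}}\simeq\gamma_{P}$ for every fixed $s\ge1$. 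Indeed $\gamma_P\preceq\gamma_{P^{s}}$ trivially and $\gamma_{P^{s}}(r)\le\gamma_P(r)^{s}$ for all $r$ (each coordinate of a vector of $\ell^{1}$-norm $\le r$ has norm $\le r$), so it suffices that $r\mapsto\gamma_P(r)^{s}$ be $\simeq\gamma_P$, i.e.\ that $\log\gamma_P(Cr)\ge s\log\gamma_P(r)$ for a suitable $C=C(s)$ — which holds in each of the three cases (e.g.\ $\log\gamma_P(r)=r^{\alpha}$ gives $\log\gamma_P(Cr)=C^{\alpha}r^{\alpha}$, so any $C$ with $C^{\alpha}\ge s$ works). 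Applying this to $\Gamma$ and $\Lambda$, the two displayed inequalities become $\gamma_{\Lambda}\preceq\gamma_{\Gamma}$ and $\gamma_{\Gamma}\preceq\gamma_{\Lambda}$, hence $\gamma_{\Gamma}\simeq\gamma_{\Lambda}$.

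Each item is then a direct asymptotic computation with $\simeq$. For \textit{(i)}, $e^{n^{\alpha}}\simeq e^{n^{\beta}}$ forces $\alpha=\beta$, since $n^{\beta}\gg C^{\alpha}n^{\alpha}+\log C$ for large $n$ whenever $\beta>\alpha$. For \textit{(ii)}, $e^{(\ln n)n^{\alpha}}\simeq e^{(\ln n)n^{\beta}}$ forces $\alpha=\beta$ for the same reason. For \textit{(iii)}, $e^{n/\ln^{\circ k}n}\simeq e^{n/\ln^{\circ d}n}$ forces $k=d$: if, say, $k<d$ then $\tfrac{n}{\ln^{\circ d}n}\gg\tfrac{Cn}{\ln^{\circ k}(Cn)}+\log C$ for large $n$ (because $\ln^{\circ k}(Cn)\gg\ln^{\circ d}n$), contradicting the half of $\simeq$ that bounds $e^{n/\ln^{\circ d}n}$ by $e^{n/\ln^{\circ k}n}$.

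The step I expect to be most delicate is the volume growth comparison: one must make precise that for the \emph{fixed} set $A$ (hence fixed $F$) the slab $S$ and the coset union are quasi-isometric to the relevant powers of the lamp groups — the quasi-isometry constants $C_F$ etc.\ are allowed to depend on $A$, which is harmless since $A$ is frozen throughout — and one must prove the stability lemma $\gamma_{P^{s}}\simeq\gamma_P$ for the three families. This last point genuinely uses the special shape of the growth functions, since $\gamma_{P^{s}}\simeq\gamma_P$ fails for sufficiently erratic subexponential growth.
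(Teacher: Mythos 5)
Your argument is correct and is essentially the paper's own proof, only written out in more detail: the paper likewise reruns the construction of the set $S$ from Claim~\ref{claim4.1}, observes that it yields an equivalence between powers of $\gamma_{\Gamma}$ and powers of $\gamma_{\Lambda}$, and then extracts the parameters by a direct asymptotic computation using the specific shape of the three growth types (your ``stability lemma'' $\gamma_{P^{s}}\simeq\gamma_{P}$ is exactly the point where that specific shape enters, just as in the paper's ``direct computation''). The only difference is cosmetic: you first normalize to $\gamma_{\Gamma}\simeq\gamma_{\Lambda}$ before comparing parameters, whereas the paper compares the powered functions directly.
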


Here, $\gamma_{G}$ stands for the growth function of a group $G$, and for $k\ge 1$, $\ln^{\circ k}=\ln(\ln(\dots))$ denotes the $k-$th iteration of the log with itself.

\begin{proof}
The proof of Claim~\ref{claim4.2} in Section \ref{section4} in fact shows that some powers of $\gamma_{\Gamma}$ are equivalent to some powers of $\gamma_{\Lambda}$. More precisely, given our $(C,K)-$quasi-isometry 
\begin{equation*}
    \Gamma\wr G \longrightarrow \Lambda\wr H, \; (c,p)\longmapsto (\alpha(c),\beta(p)),
\end{equation*}
one has $\gamma_{\Gamma}^{|A|}(n) \simeq \gamma_{\Lambda}^{|\overline{\beta}(\beta(A)^{+K})^{+K}|}(n)$ for any finite subset $A\subset G$. A direct computation then shows that, for the three possible behaviours, such equivalences force equalities of the parameters. 
\end{proof}

Additionally, one also deduces from the equivalences
\begin{equation*}
    \gamma_{\Gamma}^{|A|}(n) \simeq \gamma_{\Lambda}^{|\overline{\beta}(\beta(A)^{+K})^{+K}|}(n),\; A\subset G \;\text{finite}
\end{equation*}
that these three ranges of behaviours are quasi-isometrically distinct when taking wreath products: for instance, if $\gamma_{\Gamma}(n)\simeq e^{n^{\alpha}}$ and $\gamma_{\Lambda}(n)\simeq e^{\ln(n)n^{\beta}}$ for some $\alpha,\beta\in (0,1)$ and if $G,H$ are finitely presented groups from $\mathcal{M}_{\text{exp}}$, then $\Gamma\wr G$ and $\Lambda\wr H$ are not quasi-isometric.

\smallskip

Moreover, we know that these three types of behaviour occur for finitely generated groups. It is shown in~\cite[Theorem~B]{EZ18} that some periodic Grigorchuk groups, among which the first Grigorchuk group, fall into the first class, and the logarithmic growth exponent is computed explicitly. Other examples appear in~\cite[Theorem~1]{BE12}, where the authors construct two families of groups $(K_{k})_{k\in\N}$ and $(H_{k})_{k\in\N}$ such that 
\begin{equation*}
    \gamma_{K_{k}}(n)\simeq e^{n^{1-(1-\alpha)^{k}}}\; \text{and}\; \gamma_{H_{k}}(n)\simeq e^{\ln(n)n^{1-(1-\alpha)^{k}}}
\end{equation*}
where $\alpha \cong 0.7674$ is a fixed constant. Thus, for instance, it follows from Corollary~\ref{cor:intermediategrowth} that, for $n\ge 2$, $K_{r}\wr \text{BS}(1,n)$ and $K_{s}\wr \text{BS}(1,n)$ are quasi-isometric if and only if $r=s$. 

\smallskip

Lastly, the existence of groups with growth functions asymptotically equal to $e^{\frac{n}{\ln^{\circ k}(n)}}$ for integers $k\ge 1$ is proved in~\cite[Theorem~A]{BE14}. 

\smallskip

However, in the case of intermediate growth, the fact that powers of the growth functions must be equivalent does not provide any valuable informations on scaling properties for $\beta$. Thus we may ask:
\begin{question}
Let $\Gamma$ be an intermediate growth group, and let $G$ be a finitely presented amenable group from $\mathcal{M}_{\text{exp}}$. Is $\text{Sc}(\Gamma\wr G)$ reduced to $\left\lbrace 1\right\rbrace$?
\end{question}

Note that the proof of Proposition~\ref{prop:lamplighterrigidity} only uses the fact that $N\wr G$ has the thick bigon property and a trivial scaling group. Therefore, any group with these two properties is lamplighter-rigid. 

\begin{corollary}
Let $n,m\ge 2$ be two integers. Let $H$ be a finitely generated amenable group satisfying the thick bigon property. If $\text{Sc}(H)=\lbrace 1\rbrace$, then $\Z_{n}\wr H$ and $\Z_{m}\wr H$ are quasi-isometric if and only if $n=m$. 
\end{corollary}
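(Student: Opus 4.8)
The plan is to follow the proof of Proposition~\ref{prop:lamplighterrigidity} verbatim, replacing the iterated wreath product $N\wr G$ there by the abstract group $H$ here; as the remark preceding this corollary already points out, the only two features of $N\wr G$ that entered that argument were the thick bigon property and the triviality of the scaling group, and both are now hypotheses on $H$. The ``if'' direction is immediate: when $n=m$ the groups $\Z_{n}\wr H$ and $\Z_{m}\wr H$ literally coincide, hence are quasi-isometric.

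For the ``only if'' direction, I would start from an arbitrary quasi-isometry $\Z_{n}\wr H\longrightarrow \Z_{m}\wr H$. Since the lamp groups $\Z_{n}$ and $\Z_{m}$ are finite (as $n,m\ge 2$) and the base $H$ is amenable and has the thick bigon property, one is exactly in the setting of~\cite[theorem~8.6]{GT24a}, invoked precisely as in the proof of Proposition~\ref{prop:mixingofscalingconditions}. This produces integers $a,r,s\ge 1$ with $n=a^{r}$ and $m=a^{s}$, together with a quasi-$\frac{s}{r}$-to-one quasi-isometry $H\longrightarrow H$. Such a self-quasi-isometry witnesses, by the very definition of the scaling group, that $\frac{s}{r}\in\text{Sc}(H)$ (here $H$ amenable guarantees, via Lemma~\ref{lm:uniquenessofscalingfactors}, that $\frac{s}{r}$ is unambiguously the scaling factor). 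The hypothesis $\text{Sc}(H)=\lbrace 1\rbrace$ then forces $\frac{s}{r}=1$, i.e. $r=s$, and therefore $n=a^{r}=a^{s}=m$, which is what we wanted.

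I do not expect a real obstacle in this argument: it is entirely a matter of quoting~\cite[theorem~8.6]{GT24a} for the pair of finite lamp groups $(\Z_{n},\Z_{m})$ over the common base $H$, and then reading off that the resulting measure-scaling self-quasi-isometry of $H$ produces an element of $\text{Sc}(H)$. The one point worth a line of verification is simply that the hypotheses of~\cite[theorem~8.6]{GT24a} hold here — finiteness of the lamps, amenability of the base, and the thick bigon property of the base — all of which are either trivial or assumed; after that the conclusion is purely formal.
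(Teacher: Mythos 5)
Your argument is exactly the one the paper intends: the remark preceding the corollary states that the proof of Proposition~\ref{prop:lamplighterrigidity} uses only the thick bigon property and triviality of the scaling group, and your write-up carries out precisely that substitution, invoking~\cite[theorem~8.6]{GT24a} to extract $n=a^{r}$, $m=a^{s}$ and a quasi-$\frac{s}{r}$-to-one self-quasi-isometry of $H$, then using $\text{Sc}(H)=\lbrace 1\rbrace$ (with Lemma~\ref{lm:uniquenessofscalingfactors} guaranteeing the scaling factor is well defined) to force $r=s$. This is correct and matches the paper's proof.
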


It would be interesting to find other examples of classes of groups satisfying these two properties. Plausible candidates could be higher-rank lamplighter groups used in~\cite{DPT15}, or direct products of standard wreath products. Hence:

\begin{question}
Let $d,q\ge 2$, and let $\Gamma_{d}(q)$ be the higher rank lamplighter group constructed in~\cite{DPT15}. Is $\text{Sc}(\Gamma_{d}(q))$ reduced to $\lbrace 1\rbrace$?
\end{question}

On the other hand, wreath products $F\wr K$ with $F$ finite and $K$ amenable finitely presented and one-ended, do not satisfy the thick bigon property, but still have trivial scaling group~\cite[proposition 6.8]{GT22}. Hence:

\begin{question}
Let $F$ be a non-trivial finite group, and let $K$ be a finitely presented one-ended amenable group. Is $F\wr K$ lamplighter-rigid?
\end{question}

Even though we focused on wreath products, it is worth noticing that scaling quasi-isometries also parametrize the existence of quasi-isometries between other \textit{halo products}, as defined in~\cite{GT24a}. Among these, \textit{lampshuffler groups} are of great interest, and, for instance, combining~\cite[Corollary~8.9]{GT24a} with Theorem~\ref{thm:classificationforvirtuallyabeliangroups}, it follows that $\shuf{\Z^n\wr G}$ and $\shuf{\Z^m\wr H}$ are quasi-isometric if and only if $n=m$, when $G$ and $H$ are amenable finitely presented groups from $\mathcal{M}_{\text{exp}}$. 

\smallskip

Finally, regarding quasi-isometric classifications of wreath products, it is natural to wonder whether there is an iterated version of Corollary~\ref{cor:classificationforvirtuallyabeliangroups2}, at least for wreath products of the form 
\begin{equation*}
\Z^{n_{1}}\wr(\Z^{n_{2}}\wr\dots\wr(\Z^{n_{r}}\wr\Z^{k}))),\; n_{1},\dots,n_{r},k\ge 1. 
\end{equation*}
The existence of a quasi-isometry 
\begin{equation*}
\Z^{n_{1}}\wr(\Z^{n_{2}}\wr(\dots\wr(\Z^{n_{r}}\wr\Z^{k}))) \longrightarrow \Z^{m_{1}}\wr(\Z^{m_{2}}\wr(\dots\wr(\Z^{m_{r}}\wr\Z^{k'})))
\end{equation*}
already imposes $k=k'$, by computing F\o lner functions, and it seems not unreasonable to believe that it imposes an arithmetic condition of the kind $\frac{m_{1}}{n_{1}}=\frac{m_{2}}{n_{2}}=\dots=\frac{m_{r}}{n_{r}}$, in the spirit of Proposition~\ref{prop:mixingofscalingconditions}. Additionally, note that Proposition~\ref{prop:constructionsofQI} already provides the flexibility part, if all these quotients are equal. Hence:
\begin{question}
Let $n_{1},m_{1},\dots,n_{r},m_{r}\ge 1$ and $k,k'\ge 1$. Is it true that $\Z^{n_{1}}\wr(\Z^{n_{2}}\wr(\dots\wr(\Z^{n_{r}}\wr\Z^{k})))$ and $\Z^{m_{1}}\wr(\Z^{m_{2}}\wr(\dots\wr(\Z^{m_{r}}\wr\Z^{k'})))$ are quasi-isometric if and only if $\frac{m_{1}}{n_{1}}=\frac{m_{2}}{n_{2}}=\dots=\frac{m_{r}}{n_{r}}$ and $k=k'$?
\end{question}

\printbibliography

{\bigskip
		\footnotesize
		
		\noindent V.~Dumoncel, \textsc{Université Paris Cité, Institut de Mathématiques de Jussieu-Paris Rive Gauche, 75013 Paris, France}\par\nopagebreak\noindent
		\textit{E-mail address: }\texttt{vincent.dumoncel@imj-prg.fr}}

\end{titlepage}
\end{document}